\documentclass[11pt,reqno]{amsart}
\usepackage[T1]{fontenc}
\usepackage{lmodern}
\usepackage{amsmath,amssymb,amsthm,mathtools,esint}
\usepackage[margin=1in]{geometry}
\usepackage{microtype}
\microtypesetup{expansion=false}
\usepackage{enumitem,booktabs,array,needspace}
\usepackage{xcolor}
\usepackage[colorlinks=true,linkcolor=blue!40!black,
 citecolor=blue!40!black,urlcolor=blue!40!black]{hyperref}
\hypersetup{pdftitle={Boundary Vanishing Orders and Asymptotic Expansions for Planar Harmonic Functions with a Sublinear Robin Condition}}
\newtheorem{theorem}{Theorem}[section]
\newtheorem{lemma}[theorem]{Lemma}
\newtheorem{proposition}[theorem]{Proposition}
\newtheorem{corollary}[theorem]{Corollary}
\theoremstyle{definition}

\theoremstyle{remark}
\newtheorem{remark}{Remark}[section]
\numberwithin{equation}{section}
\newcommand{\R}{\mathbb R}
\newcommand{\C}{\mathbb C}
\newcommand{\N}{\mathbb N}
\newcommand{\Om}{\Omega}
\newcommand{\HH}{\R^2_+}
\newcommand{\ord}{\operatorname{ord}}
\newcommand{\Lip}{\operatorname{Lip}}

\newcommand{\Span}{\operatorname{span}}
\newcommand{\ind}{\mathbf 1}
\newcommand{\sgn}{\operatorname{sgn}}
\newcommand{\Rea}{\operatorname{Re}}
\newcommand{\Ima}{\operatorname{Im}}
\newcommand{\eps}{\varepsilon}
\newcommand{\dd}{\,d}

\DeclareMathOperator{\Log}{Log}
\setlist[enumerate]{label=\textup{(\arabic*)},leftmargin=2.2em}
\allowdisplaybreaks[1]

\title[Boundary Vanishing Orders]
{Boundary Vanishing Orders and Asymptotic Expansions\\
for Planar Harmonic Functions\\
with a Sublinear Robin Condition}
\author[H. Liu]{Hairong Liu\textsuperscript{1}}
\address{\textsuperscript{1}School of Mathematics and Statistics, Nanjing University of Science and Technology, Nanjing 210094, P.R. China}
\email{hrliu@njust.edu.cn}
\author[L. Tian]{Long Tian\textsuperscript{2}}
\address{\textsuperscript{2}School of Mathematics and Statistics, Nanjing University of Science and Technology, Nanjing 210094, P.R. China}
\email{tianlong19850812@163.com}
\author[X. Yang]{Xiaoping Yang\textsuperscript{3}}
\address{\textsuperscript{3}School of Mathematics, Nanjing University, Nanjing 210093, P.R. China}
\email{xpyang@nju.edu.cn}
\hypersetup{pdfauthor={Hairong Liu, Long Tian, Xiaoping Yang}}
\date{}
\subjclass[2020]{35J25, 35B40, 35B60}

\keywords{Sublinear Robin condition, boundary vanishing order, nodal domains,
 conformal mapping, Dini mean oscillation}

\begin{document}
\begin{abstract}
We study boundary vanishing orders and asymptotic expansions of planar
harmonic functions satisfying the sublinear Robin condition
\[
 \partial_\nu u=\lambda u-|u|^{p-1}u,
 \qquad \lambda\ge0,\quad 0<p<1.
\]
For nontrivial global solutions on bounded connected Lipschitz domains,
we bound the number of nodal domains by the number of Steklov eigenvalues
strictly below $\lambda$ and establish finite
vanishing order at every boundary point.
The boundary scaling gives the critical exponent $\kappa=1/(1-p)$.
On $C^1$ boundaries, every positive subcritical vanishing order is an
integer, and the leading term is a unique homogeneous Neumann profile in
conformal coordinates. At a noninteger critical order, the rescaled
solution converges in normalized graph coordinates to an explicit
homogeneous profile determined by $p$ up to sign. Every supercritical
bulk vanishing order $m$ is an integer, with a unique homogeneous
Dirichlet profile in conformal coordinates. The boundary trace has
vanishing order $(m-1)/p>m$. Integer critical vanishing orders are
excluded if the boundary is $C^1$ with Dini mean oscillation.
\end{abstract}

\maketitle

\section{Introduction}\label{paper:sec:introduction}

Let $\Om\subset\R^2$ be a bounded connected domain. We consider
the restoring sublinear boundary problem
\begin{equation}\label{eq:problem}
\begin{cases}
  \Delta u=0 &\text{in }\Om,\\
  \partial_\nu u=\lambda u-|u|^{p-1}u &\text{on }\partial\Om,
 \end{cases}
 \end{equation}
where $\lambda\geq0$, $0<p<1$, and $\nu$ is the outward unit
normal. Our aim is to
understand how this boundary nonlinearity and the geometry of
$\partial\Om$ determine the boundary vanishing orders of nontrivial solutions.

\subsection{Background and related work}
\label{paper:subsec:background}

Unique continuation for elliptic equations asks whether vanishing on
an open set or to infinite order at a point forces a solution to
vanish identically. Quantitative results control doubling, vanishing
orders, and the size of nodal and singular sets. Classical tools
include Carleman estimates~\cite{JerisonKenig} and frequency-function
methods~\cite{GarofaloLin,LinNodal}. Boundary unique continuation
also depends on the boundary condition and domain regularity.

For harmonic functions vanishing continuously on an open boundary patch,
Adolfsson, Escauriaza and Kenig~\cite{AEK} proved uniqueness when the normal
derivative vanishes on a set of positive surface measure on a convex
domain. Adolfsson and Escauriaza~\cite{AE} developed boundary doubling
and unique continuation for Dirichlet conditions on Dini domains and
homogeneous Neumann conditions on $C^{1,1}$ domains. For the Dirichlet
problem, Kukavica and Nystr\"om~\cite{KukavicaNystrom} established
positive-measure uniqueness on Dini domains, and Tolsa~\cite{Tolsa}
extended it to $C^1$ domains and Lipschitz domains with sufficiently
small Lipschitz constant.

Banerjee and Garofalo~\cite{BanerjeeGarofalo} obtained quantitative
boundary vanishing-order bounds for variable-coefficient Schr\"odinger
equations with zero Dirichlet data on $C^{1,\mathrm{Dini}}$ domains.
Under the same geometric regularity, Kenig and
Zhao~\cite{KenigZhaoSingular} estimated the singular set up to a
Dirichlet boundary. They also established a unique leading homogeneous
harmonic polynomial at each point of a zero Dirichlet patch, with an
error controlled by the Dini geometry~\cite{KenigZhaoExpansion}.
Kriventsov and Li~\cite{KriventsovLi} related asymptotic expansions at
boundary points with tangent cones to quantitative geometric
assumptions. Zhu and Zhuge~\cite{ZhuZhuge} obtained nodal-set estimates
for Dirichlet eigenfunctions on quasiconvex Lipschitz domains.

In two dimensions, Vita~\cite{Vita} used conformal mapping to prove
boundary Cauchy uniqueness on chord-arc domains and local finiteness
of critical points near a zero Dirichlet patch on $C^{1,\mathrm{DMO}}$
boundaries. Here the tangent has Dini mean oscillation, a condition
weaker than pointwise Dini continuity that yields nondegenerate
conformal maps near the boundary. For the nonlinear Robin problem,
the trace need not vanish on an entire boundary patch, so odd
reflection and Dirichlet results do not directly yield the desired
asymptotics.

For linear Robin problems, Li and Wang~\cite{LiWang} proved quantitative
vanishing-order estimates and boundary unique continuation on
$C^{1,1}$ domains with regular Robin potentials. Li~\cite{Li} extended
unique continuation on such domains to nonsmooth potentials satisfying
suitable integrability conditions, including $L^{d-1+\varepsilon}$
in dimension $d$. The formal Robin coefficient $\lambda-|u|^{p-1}$
in \eqref{eq:problem} is singular at boundary zeros, and its
integrability cannot be assumed before the vanishing behavior is
understood. A direct linear reduction also loses the sign and
nonlinear structure essential here.

For sublinear interior equations such as $-\Delta v=|v|^{p-1}v$,
$0<p<1$, the quotient of the nonlinearity by $v$ is likewise singular
near the nodal set. Parini and Weth~\cite{PariniWeth} proved that least
energy nodal solutions of a sublinear Neumann problem have a zero set
of Lebesgue measure zero. Soave and Weth~\cite{SoaveWeth} established
weak unique continuation for a general class of sublinear elliptic
equations without a minimizing assumption. R\"uland~\cite{Ruland}
proved weak and strong unique continuation, and under additional
hypotheses continuation from sets of positive measure, using Carleman
estimates that incorporate the sublinear term. In \eqref{eq:problem},
by contrast, the reaction acts only on the boundary and the interior
equation remains harmonic.

Soave and Terracini~\cite{SoaveTerracini} classified vanishing orders
and studied nodal sets for two-phase sublinear interior equations.
For the model with power $p$, the admissible orders are integers below
the critical exponent $2/(1-p)$ and the critical exponent itself.
Their analysis uses nondegeneracy, blow-up arguments, and monotonicity
formulas. When the two phases have different powers, Soave and
Tortone~\cite{SoaveTortone} classified vanishing orders for local
minimizers.

Fractional equations provide a closer scaling analogue. The extension
of Caffarelli and Silvestre~\cite{CaffarelliSilvestre} realizes
fractional Laplacians as weighted Dirichlet-to-Neumann operators;
for $s=1/2$, it gives the ordinary Dirichlet-to-Neumann map for harmonic
functions on a half-space. For nonlocal sublinear equations,
Tortone~\cite{Tortone} proved strong unique continuation, classified
vanishing orders, and studied nodal stratification for weak stationary
solutions. The critical exponent $2s/(1-p)$ becomes $1/(1-p)$ at
$s=1/2$. However, the fractional equation there has a positive
sublinear reaction, whereas \eqref{eq:problem} has a negative,
restoring reaction in the outward-flux convention. Thus its upper
bound on vanishing orders does not transfer to our problem. We show
that solutions with supercritical bulk order have boundary traces
vanishing at a strictly higher order.

\subsection{Critical scaling and main ideas}
\label{paper:subsec:scaling}

The scaling behind this phenomenon can be seen directly. If a
solution has leading size $r^k$, then its normal derivative has
scale $r^{k-1}$, the sublinear term has scale $r^{pk}$, and the
linear term has scale $r^k$. Hence the distinguished boundary
exponent is
\[
 \kappa=\frac1{1-p}.
\]
For $k<\kappa$, the rescaled reaction disappears and the limiting
boundary condition is homogeneous Neumann. At $k=\kappa$, the
normal derivative and the sublinear term balance, so the nonlinear
law remains in the limiting problem. For $k>\kappa$, a nonzero
trace of the leading bulk profile would be incompatible with the
balance of powers. The leading harmonic term must therefore have
zero trace, while a finer boundary term supplies the nonlinear
flux. If the bulk degree is $m$, this balance predicts the trace
degree
\[
 p\ell=m-1,\qquad \ell=\frac{m-1}{p}>m.
\]
Thus the bulk and boundary vanishing orders must be distinguished,
even though they arise from the same harmonic function.

Boundary regularity determines how the conformal asymptotics pass
to the original coordinates. Under $C^{1,\mathrm{DMO}}$ regularity,
the conformal map has a nonzero boundary derivative, which gives
nondegenerate physical expansions in the subcritical and
supercritical regimes. We also establish finite positive limits
for the corresponding normalized bulk densities. At noninteger
critical order, physical graph rescaling gives the profile and
density directly under $C^1$ regularity.

The proofs combine spectral, planar, and nonlinear arguments.
The variational characterization of the Steklov spectrum,
reviewed in~\cite{GP}, gives a strict Rayleigh-quotient comparison
on nodal restrictions. Planar separation and the weighted boundary
sign theorem of Baouendi and Rothschild~\cite{BR} then connect
nodal information with boundary unique continuation. For the
local classification we use conformal coordinates, nonlinear
boundary comparison, and critical blow-up arguments based on
Weiss-type monotonicity; the latter belongs to the homogeneity
framework developed in~\cite{Weiss} and adapted to sublinear
problems in~\cite{SoaveTerracini,Tortone}.

The geometric estimates draw on quasiconformal distortion and
boundary behavior of conformal maps~\cite{Mori,Pom}, the Lindel\"of
tangent-angle relation~\cite{Kalaj},
and the nondegenerate conformal maps of~\cite{Vita}.
Conormal regularity with Dini mean oscillation coefficients
\cite{DongLeeKim} supplies the boundary differentiability needed
for the physical normal-flux asymptotics.

\subsection{Main results}
\label{paper:subsec:main-results}
\label{paper:subsec:setting}
We consider real-valued solutions of \eqref{eq:problem} in the class
\begin{equation}\label{eq:class}
 u\in H^1(\Om)\cap C(\overline\Om),\qquad u\not\equiv0.
\end{equation}
Harmonicity makes every solution smooth in the interior. The Robin
condition is understood in terms of normal traces and holds
pointwise wherever the solution is $C^1$ up to the boundary.
Under $C^{1,\mathrm{DMO}}$ regularity, this boundary differentiability
is established in Lemma~\ref{t15:lem:boundary-c1}.
For $q\in\partial\Om$, choose $R_0>0$ so that
$\partial\Om\cap B_{R_0}(q)$ is a Lipschitz graph. The \emph{local problem} is
\begin{equation}\label{paper:eq:local-problem}
 \Delta u=0\quad\text{in }\Om\cap B_{R_0}(q),\qquad
 \partial_\nu u=\lambda u-|u|^{p-1}u
 \quad\text{on }\Gamma:=\partial\Om\cap B_{R_0}(q).
\end{equation}
For the local problem, we require
\begin{equation}\label{paper:eq:local-class}
 u\in H^1(\Om\cap B_R(q))
       \cap C(\overline\Om\cap B_R(q))
 \quad\text{for every }0<R<R_0.
\end{equation}

We define the \emph{bulk $L^2$-vanishing
order}, or simply the \emph{vanishing order}, of $u$ at $q$ by
\begin{equation}\label{eq:order}
 \ord_q u:=\sup\left\{\alpha\ge0:
  \int_{\Om\cap B_r(q)}u^2\dd X=O(r^{2\alpha+2})
  \text{ as }r\rightarrow0\right\}.
\end{equation}
The value $+\infty$ is initially allowed, and the supremum need not
be attained. We also define the \emph{boundary $L^\infty$-vanishing
order} and the \emph{boundary $L^2$-vanishing order}, respectively, by
\begin{equation}\label{paper:eq:boundary-orders}
 \begin{split}
 \ord_q^{\partial,\infty} u
 &=\sup\left\{\beta\ge0:
    \sup_{\partial\Om\cap B_r(q)}|u|=O(r^\beta)\right\},\\
 \ord_q^\partial u
 &=\sup\left\{\beta\ge0:
    \int_{\partial\Om\cap B_r(q)}u^2\dd S
                   =O(r^{2\beta+1})\right\}.
 \end{split}
\end{equation}

The boundary regularity class $C^{1,\mathrm{DMO}}$,
the local conformal maps, and the coordinate
conventions are defined in Section~\ref{sec:prelim}.

\subsubsection{Nodal domains and boundary unique continuation}
\label{paper:subsubsec:nodal-sucp}

A nodal domain is a connected component of $\{u\ne0\}\cap\Om$,
and $d(u)$ denotes their number. Write the Steklov spectrum, with
multiplicity, as
\[
 0=\sigma_0(\Om)<\sigma_1(\Om)\le\sigma_2(\Om)\le\cdots,
 \qquad N_S^{<}(t)=\#\{j\ge0:\sigma_j(\Om)<t\}.
\]

\begin{theorem}\label{thm:main}
Let $\Om\subset\R^2$ be a bounded connected Lipschitz domain, let
$0<p<1$ and $\lambda>0$, and let $u\in H^{1}(\Om)\cap C(\overline{\Om})$ be a nontrivial solution of 
\eqref{eq:problem}. Then
\begin{equation}\label{eq:nodal-main}
 d(u)\le N_S^{<}(\lambda)<\infty,
\end{equation}
and
\begin{equation}\label{eq:finite-main}
 \ord_q u<\infty\qquad\text{for every }q\in\partial\Om.
\end{equation}
\end{theorem}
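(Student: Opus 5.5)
The argument splits into two parts: a spectral bound on the number of nodal domains, and finiteness of the vanishing order, which I derive from the nodal bound together with planar boundary unique continuation.

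\textbf{Step 1: nodal bound.} Let $\Om_1,\dots,\Om_k$ be distinct nodal domains, and set $u_i=u\ind_{\Om_i}$. Since $u$ is continuous on $\overline\Om$ and vanishes on $\partial\Om_i\cap\Om$, each $u_i$ lies in $H^1(\Om)$. Testing the weak form of \eqref{eq:problem} against $u_i$ gives
\[
\int_\Om|\nabla u_i|^2=\lambda\int_{\partial\Om}u_i^2-\int_{\partial\Om}|u_i|^{p+1}.
\]

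I first check that the trace of $u_i$ is not identically zero. If it were, then $u_i$ would be harmonic in $\Om_i$ with zero boundary values on all of $\partial\Om_i$, hence $u_i\equiv0$, which is impossible.

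It follows that $\int|u_i|^{p+1}>0$, and therefore
\[
\int|\nabla u_i|^2<\lambda\int_{\partial\Om}u_i^2 .
\]
The $u_i$ have disjoint supports, so the traces are orthogonal in $L^2(\partial\Om)$ and the gradients are orthogonal in $L^2(\Om)$.

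Now take any nonzero $v=\sum c_iu_i$. By disjointness, the strict inequality passes to the sum: the Rayleigh quotient $\int|\nabla v|^2/\int_{\partial\Om}v^2$ is strictly less than $\lambda$. The span of the $u_i$ is $k$-dimensional, since the traces are independent. By the min–max characterization of Steklov eigenvalues, this gives $\sigma_{k-1}<\lambda$, so $k\le N_S^<(\lambda)$.

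Finiteness of $N_S^<(\lambda)$ follows because the Steklov spectrum is discrete on Lipschitz domains, by compactness of the trace map $H^1\to L^2(\partial\Om)$. Since every finite collection of nodal domains obeys the bound, $d(u)\le N_S^<(\lambda)$.

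\textbf{Step 2: finite order.} Suppose, for contradiction, that $\ord_qu=\infty$ at some $q\in\partial\Om$.

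I use the finite nodal count plus planar topology. Near $q$, the nodal set of the harmonic function $u$ consists of analytic arcs. Because there are finitely many nodal domains, I expect to find a small boundary arc $\gamma\ni q$, or an arc adjacent to $q$, on which the trace of $u$ has a constant sign or vanishes identically.

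First case: the trace vanishes on an open subarc. Then $\partial_\nu u=0$ there as well, and Cauchy uniqueness for harmonic functions on Lipschitz domains gives $u\equiv0$. For planar Lipschitz domains this follows from conformal mapping plus the boundary uniqueness of bounded analytic functions (Privalov), since the Lipschitz-graph conformal map is bi-Lipschitz-like in the sense of harmonic measure. This contradicts nontriviality.

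Second case: the trace has a strict sign near $q$, say $u\ge0$. Then $\partial_\nu u=u(\lambda-u^{p-1})<0$ wherever $0<u$ is small. Flattening the boundary conformally, I obtain a holomorphic $F=\tilde u+i\tilde v$ on the half-disc whose real part is very flat at $0$ and whose normal derivative has a sign tied to $\tilde u$. The Baouendi–Rothschild weighted boundary sign theorem says that a holomorphic function vanishing to infinite order with $\Rea$ or $\Ima$ of one sign on the boundary must vanish. Together with the relation $|\partial_\nu u|\ge c|u|^p$ (the restoring term dominates), this forces $F$ to vanish identically, and hence $u\equiv0$.

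To convert the $L^2$ infinite order into the boundary smallness these theorems require, I use boundary Caccioppoli and Harnack estimates through the conformal map. These show that the trace and the conjugate function are also $O(r^N)$ for all $N$.

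\textbf{Main obstacle.} The hard part is the topological reduction in Step 2: ruling out infinitely many sign changes of the trace accumulating at $q$ without already knowing the vanishing order. My plan here is as follows. Each boundary sign change of the trace, together with harmonicity and the maximum principle, forces nodal arcs emanating from the boundary into $\Om$. Planar separation then shows that infinitely many alternating sign intervals would produce infinitely many nodal domains, contradicting Step 1. Once finitely many sign changes are established, one of the two cases above applies on each side of $q$.
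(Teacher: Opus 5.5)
Your proposal is correct and is essentially the paper's proof. Step 1 is the same Steklov min--max argument with the nodal restrictions $u\ind_D$ (Proposition~\ref{prop:nodal}). Step 2 uses the same ingredients as Proposition~\ref{prop:infinite-nodal}, only arranged contrapositively: flatness of $U=u\circ\Phi$ (via the $L^2$--$L^\infty$ bound uniform in the restoring term and the H\"older bound $|\Phi(z)-q|\le C|z|^\alpha$), the Baouendi--Rothschild sign lemma, and planar separation of alternating boundary collars (pigeonhole on repeated ordered pairs $(P,N)$ plus Lemma~\ref{lem:crossing}).

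Two small corrections. The sign lemma needs flatness of $U$ alone, so the conjugate-function flatness and the flux inequality $|\partial_\nu u|\ge c|u|^p$ are unnecessary. When the trace has opposite signs on the two sides of $q$, you must invoke the weighted form with $xU(x,0)$ rather than the unweighted statement you quote.
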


\begin{remark}
If $\lambda=0$, Green's formula for \eqref{eq:problem}, with $u$
as the test function, gives
\[
 \int_\Om|\nabla u|^2\dd X+
 \int_{\partial\Om}|u|^{p+1}\dd S=0.
\]
Hence $u\equiv0$.
\end{remark}

The finite vanishing order conclusion of the above theorem also
has a local analogue, namely,
\begin{proposition}[Local strong unique continuation on a planar Lipschitz boundary]
\label{t15:local-sucp}
Let $q\in\partial\Omega$ be a Lipschitz boundary point of a planar
domain $\Omega$, and let $u$ be a real local solution satisfying
\eqref{paper:eq:local-problem}--\eqref{paper:eq:local-class}, with
$0<p<1$ and $\lambda\ge0$. If $u(q)=0$ and
\[
 \int_{\Omega\cap B_r(q)}u^2=O(r^{2N+2})
 \quad\hbox{for every }N\geq1,
\]
then $u$ vanishes identically in a neighborhood of $q$.
Consequently, if $u$ is not identically zero near $q$, then its
vanishing order at $q$ is finite.
\end{proposition}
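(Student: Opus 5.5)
The plan is to reduce to a half-disc, find a holomorphic function whose real part has a controlled sign on the flat boundary, and then invoke the weighted boundary sign theorem of Baouendi and Rothschild~\cite{BR}. We expect the sign bookkeeping along the boundary to be the main difficulty.

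\emph{Conformal reduction.} Fix a Riemann map $\Phi$ from the upper half-disc $D_+=\{|z|<1,\ \Ima z>0\}$ onto a Jordan subdomain of $\Om\cap B_{R_0}(q)$. We arrange that $\Phi$ sends $0$ to $q$ and $(-1,1)$ onto an arc of $\Gamma$. Since $\partial\Om$ is a Lipschitz graph near $q$, the boundary distortion estimates for conformal maps~\cite{Mori,Pom} give two-sided Hölder bounds $c|z|^{a}\le|\Phi(z)-q|\le C|z|^{b}$ with $0<b\le a$. Set $v=u\circ\Phi$. Then $v$ is harmonic in $D_+$ and continuous up to $(-1,1)$. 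The Dirichlet energy is conformally invariant, and the bulk $L^2$ bound for all $N$ transfers through the Hölder bounds. Combined with interior and boundary Caccioppoli/mean-value estimates, this shows that $v$ and $\nabla v$ are $O(|z|^N)$ in $D_+\cap B_\rho$ for every $N$.

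In weak form, the Robin law becomes
\[
 -\partial_y v=|\Phi'|\bigl(\lambda v-|v|^{p-1}v\bigr)\quad\text{on }(-1,1).
\]
Let $g=v_x-iv_y$, which is holomorphic in $D_+$, and set $G=ig$. Then $\Rea G=v_y$. Near $0$ we have $|v|<\lambda^{-1/(1-p)}$ (any threshold works if $\lambda=0$). There $\Rea G$ has the same sign as $v$ on the real segment, and $\Rea G$ is nonzero wherever $v$ is nonzero. The function $G$ also vanishes to infinite order at $0$. Boundary regularity of $G$ is subtle, because $|\Phi'|$ may be singular. We would work with $G$ in the sense of nontangential limits, or with the primitive $f=v+i\tilde v$ and the Cauchy--Riemann identity $\partial_x\tilde v=-\partial_y v$, so that only continuous objects appear.

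\emph{No small nodal domains touching $\Gamma$.} Let $D$ be a nodal domain of $u$ with $\overline D\subset B_r(q)$, where $r$ is small enough that $|u|<\lambda^{-1/(1-p)}$ on $\overline\Om\cap B_r(q)$. Testing the equation with $u\ind_D$ gives
\[
 \int_D|\nabla u|^2=\int_{\partial D\cap\Gamma}\bigl(\lambda u^2-|u|^{p+1}\bigr)\dd S\le0 .
\]
Hence $u\equiv0$ on $D$, which is impossible. Nodal domains not touching $\Gamma$ are excluded by the maximum principle, since $u=0$ on their whole boundary. So every nodal domain meeting $B_r(q)$ must reach $\partial B_r(q)$.

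\emph{Finitely many boundary sign changes.} Planar separation now controls the boundary sign pattern. Consecutive boundary arcs of opposite sign of $v$ near $0$ are separated by nodal arcs. By the previous step these arcs cannot close up inside $B_r$, so each boundary arc of constant sign belongs to a nodal domain reaching $\partial B_r$. Distinct such arcs accumulating at $0$ would produce infinitely many nested, pairwise separated nodal strands crossing a fixed annulus. We would rule this out by a comparison argument: a harmonic function of nonzero but infinitesimally small size cannot support arbitrarily many sign changes across an annulus, which we plan to show via the Dirichlet energy on the annulus and the Courant-type bound from the first part.

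This step is the main obstacle. The fallback is a direct Weiss/frequency-type argument on each sector between sign changes, or an argument that $v$ keeps a constant sign on each side of $0$, using the real analyticity of $v$ away from its boundary zeros.

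\emph{Conclusion.} With finitely many sign changes of $v$ on $(-\rho,\rho)$ near $0$, at points $x_1,\dots,x_k$, the polynomial weight $P(x)=\pm\prod_j(x-x_j)$ has real coefficients. Then $\Rea\bigl(P(z)G(z)\bigr)=P\,v_y\ge0$ on $(-\rho,\rho)$, and $PG$ vanishes to infinite order at $0$. The weighted boundary sign theorem of~\cite{BR} gives $PG\equiv0$, hence $G\equiv0$, so $v$ is constant and therefore $v\equiv0$. Pulling back by $\Phi$, $u$ vanishes on a neighbourhood of $q$ in $\Om$. The finite-order statement is the contrapositive.
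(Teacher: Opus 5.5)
Your ingredients are the right ones: the restoring energy identity, planar separation, and the Baouendi--Rothschild sign theorem. But the step you flag as the main obstacle is a genuine gap, and the mechanisms you propose would not close it.

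What is needed is a bound on the number of nodal domains near $q$, and nothing in your proposal supplies one.
\begin{itemize}
\item Your energy identity only excludes nodal domains trapped inside $B_r(q)$, which gives no count.
\item The Courant-type bound of Theorem~\ref{thm:main} is unavailable: it requires the Robin law on all of $\partial\Om$, while here the equation is only local.
\item Smallness of the amplitude does not limit sign changes by itself (think of $\eps\Rea z^n$). What limits them is real analyticity of the fixed function $u$ on a fixed interior curve.
\end{itemize}

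The missing idea is to localize to a region whose \emph{artificial} boundary meets $\{u\ne0\}$ in finitely many components.
\begin{itemize}
\item The trace of $u$ cannot vanish on an open arc of $\Gamma$. The Robin law would give zero normal trace there, the zero extension would be harmonic by Weyl's lemma, and interior unique continuation would kill the germ.
\item One can therefore choose $a<0<b$ and $h$ so that the box $D=\{a<x<b,\ \psi(x)<y<h\}$ has $u\ne0$ at its four corners.
\item On the three straight sides $\Sigma$, $u$ is a nonzero real-analytic function that does not vanish near the corners. Hence $\Sigma\cap\{u\ne0\}$ has finitely many components.
\item Apply your energy identity to $u\ind_E$, extended by zero across $\Sigma$. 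It shows that every nodal domain $E$ of $u|_D$ has nonzero trace on $\Sigma$, so $u|_D$ has finitely many nodal domains.
\end{itemize}
With that in hand, your ``nested strands'' step becomes Lemma~\ref{lem:crossing}. Infinitely many alternating boundary arcs distributed among finitely many nodal domains would produce two consecutive positive--negative pairs with the same labels, which is the forbidden cyclic pattern $P,N,P,N$. The paper runs this contrapositively: Lemma~\ref{lem:sign} forces alternating signs along a sequence tending to $0$, and finiteness of nodal domains in $D$ gives the contradiction.

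Two further points.
\begin{itemize}
\item The detour through $G=i(v_x-iv_y)$ is unnecessary, and it creates problems you cannot resolve on a Lipschitz boundary. Since $|\Phi'|$ may blow up or vanish at $0$, neither the boundary behaviour of $G$ nor your claimed flatness of $\nabla v$ up to the diameter is justified. The Robin law already makes the sign of $v_y$ equal to the sign of $v$ near $0$. So apply the sign theorem directly to the harmonic function $v$, which is flat by Corollary~\ref{cor:flat}. After shrinking the interval past the finitely many nonzero sign changes, only the weights $1$ and $x$ of Lemma~\ref{lem:sign} are needed.
\item A round ball is awkward for a general Lipschitz graph, since $\partial B_r(q)$ may cross $\Gamma$ many times. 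The vertical sides of the box meet the graph exactly once.
\end{itemize}
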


\subsubsection{Subcritical vanishing order and Neumann profiles}
\label{paper:subsubsec:subcritical}

\begin{theorem}\label{t12:thm:main}
Let $\Om\subset\R^2$ be a bounded connected domain whose boundary is
$C^1$ near $q\in\partial\Om$, and let $u$ be a real local solution
satisfying \eqref{paper:eq:local-problem}--\eqref{paper:eq:local-class}, with
$u(q)=0$. Suppose
\begin{equation*}
 0<k:=\ord_q u<\kappa,
\end{equation*}
and put $\delta_*:=1-(1-p)k>0$. Fix any local conformal
map $\Phi$ at $q$ and put $U=u\circ\Phi$.
\begin{enumerate}[label=\textup{(\roman*)}]
\item Then
the order $k$ is a positive integer. Moreover, there is a unique
$A\in\R\setminus\{0\}$, relative to the fixed conformal map, such that,
for every $0<\delta<\delta_*$,
\[
 U(z)=A\operatorname{Re}(z^k)+O(|z|^{k+\delta})
 \qquad(z\to0,\ y\ge0),
\]
uniformly on the closed half-disk.
The full family of rescalings converges as $r\rightarrow 0$:
\begin{equation}\label{eq:full-family-convergence}
    r^{-k}U(r\,\cdot)
    \longrightarrow A\,\operatorname{Re}(z^k)
\end{equation}
locally uniformly in $\mathbb{R}^{2}_{+}$ and strongly in
$H^{1}(B^{+}_{L})$ for every fixed $L>0$.
Moreover,
\begin{equation}\label{t12:eq:trace}
 \int_{-r}^rU(t,0)^2\dd t
 =\frac{2A^2}{2k+1}r^{2k+1}+o(r^{2k+1}).
\end{equation}

\item If, in addition, $\partial\Om$ is $C^{1,\mathrm{DMO}}$ near $q$,
then the pullback $U$ admits the refined asymptotic expansion
\begin{equation*}
 U(z)=A\operatorname{Re}(z^k)+O(|z|^{k+\delta_*})
 \qquad(z\to0,\ z\in\overline{B_R^+}),
\end{equation*}
with a uniform remainder.

In the original coordinates, $u$ admits the asymptotic expansion
\begin{equation}\label{t12:eq:physical-expansion}
 u(q+w)=P_k(w)+o(|w|^k)
 \qquad(w\to0,\ q+w\in\overline\Om),
\end{equation}
uniformly in the direction of approach, where
\begin{equation*}
 P_k(w):=A\operatorname{Re}\bigl((w/a_\Phi)^k\bigr)
\end{equation*}
is a nonzero homogeneous harmonic polynomial of degree $k$
satisfying the homogeneous Neumann condition on
$\partial H_{a_\Phi}$, with 
 $H_{a_\Phi}:=\{w\in\C:\operatorname{Im}(w/a_\Phi)>0\}$.

Furthermore, the normalized $L^2$-mass satisfies
\begin{equation*}
 \lim_{r\rightarrow0}r^{-2k-2}
 \int_{\Om\cap B_r(q)}u^2\dd X
 =\frac{\pi A^2}{4(k+1)|a_\Phi|^{2k}}>0.
\end{equation*}
\end{enumerate}
\end{theorem}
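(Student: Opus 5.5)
We pass to conformal coordinates and treat everything as a perturbed Neumann problem on the half-disk. Fix the local conformal map $\Phi:\overline{B_R^+}\to\overline\Om$ near $q$ with $\Phi(0)=q$, mapping the segment $(-R,R)$ onto $\Gamma$. Then $U=u\circ\Phi$ is harmonic in $B_R^+$. On $y=0$ it satisfies
\[
-\partial_yU=|\Phi'(t)|\bigl(\lambda U-|U|^{p-1}U\bigr).
\]
On a $C^1$ boundary $|\Phi'|$ need not be bounded above or below. It does, however, satisfy $|\Phi'(t)|=|t|^{o(1)}$-type distortion bounds, coming from the quasiconformal/Lindel\"of estimates cited in Section~\ref{sec:prelim}. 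These bounds also show that $\ord_0U=\ord_qu=k$, since $\Phi$ distorts distances only by factors $r^{\pm\eps}$ for every $\eps>0$.

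Write $g(t):=|\Phi'(t)|(\lambda U-|U|^{p-1}U)$. The a priori bound $|U|\lesssim r^{k-\eps}$ on $B_r^+$ follows from the $L^2$ order together with boundary $L^\infty$ estimates for the Neumann problem. It gives
\[
|g|\lesssim r^{pk-\eps'}=r^{(k-1)+\delta_*-\eps'}
\]
on $[-r,r]$.

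\textbf{Step 1 (integer order and expansion).} Represent $U=U_0+U_1$. Here $U_1$ is the Neumann potential of the flux, namely the convolution of $g$ with $\frac1\pi\log|z-t|$ suitably truncated. The remainder $U_0$ is harmonic with zero Neumann data, so its even reflection across $y=0$ is harmonic in $B_R$. A Neumann-potential estimate, done by a dyadic decomposition of $g$, gives
\[
U_1=\text{(harmonic polynomial of degree}\le k\text{ with zero Neumann data)}+O(|z|^{k+\delta}),
\]
provided $k+\delta$ is not an integer. The decomposition is applied with Taylor subtraction of the logarithmic kernel up to order $k$, which is legitimate because $g$ decays like $|t|^{k-1+\delta}$.

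Since $U_0$ is reflection-even, it expands as $\sum_j a_j\Rea(z^j)$. The total $U$ is then a polynomial $\sum_{j}b_j\Rea(z^j)$ plus $O(|z|^{k+\delta})$. Because $\ord U=k$ and $\delta>0$, the leading nonzero $b_j$ must occur at $j=k$. In particular $k$ is an integer, and we set $A:=b_k\neq0$.

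If $\delta_*$ is small one must bootstrap. Start from $|U|\lesssim r^{k-\eps}$, obtain the expansion with $\delta$ replaced by $\delta-\eps$, and iterate to reach every $\delta<\delta_*$. Uniqueness of $A$ is immediate from the expansion. Convergence of the rescalings and the trace asymptotics \eqref{t12:eq:trace} then follow by direct integration, using $\int_{-1}^1\cos^2\!0\,dt=2$ on the boundary, i.e. $\Rea(t^k)=t^k$, together with Caccioppoli estimates for the $H^1$ convergence.

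\textbf{Step 2 (the DMO case).} Under $C^{1,\mathrm{DMO}}$ regularity, \cite{Vita} gives $\Phi'$ continuous up to $y=0$ with $\Phi'(0)=a_\Phi\neq0$. The $\eps$-losses then disappear: $|g|\lesssim|t|^{k-1+\delta_*}$, and the same potential estimate yields the sharp remainder. In the borderline case where $k+\delta_*$ is an integer one should expect a logarithm. I would handle this by checking that the kernel Taylor subtraction still gives $O(|z|^{k+\delta_*})$, or else note that this case needs separate care.

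Composing with $\Phi^{-1}(q+w)=w/a_\Phi+o(|w|)$ gives \eqref{t12:eq:physical-expansion}. The polynomial $P_k$ has zero Neumann data on $\partial H_{a_\Phi}$ because $\Rea(\zeta^k)$ has zero Neumann data on $\Ima\zeta=0$. The mass limit follows from
\[
\int_{B_1^+}\cos^2(k\theta)\rho^{2k}\,\rho\,d\rho\,d\theta=\frac{\pi}{4(k+1)},
\]
after the change of variables $w=a_\Phi\zeta$, which contributes the Jacobian $|a_\Phi|^2$ and the factor $|a_\Phi|^{-2k}$ from $P_k$, with $r$ rescaled accordingly.

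\textbf{Main obstacle.} The hardest step is the a priori control in Step 1 when the boundary is merely $C^1$. We must show that $|\Phi'|$ distortion plus the $L^2$ order give a pointwise bound $|U|\lesssim r^{k-\eps}$ strong enough to make the flux subcritical. The sign of $-|U|^{p-1}U$ helps here: comparison with barriers shows the nonlinearity only pulls $U$ toward $0$. I would first try a Moser/De Giorgi boundary $L^\infty$ bound on $B_r^+$, with the nonlinear term dropped by this sign, and then feed it into the dyadic potential estimate.
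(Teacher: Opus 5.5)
Your route is the paper's. You use a truncated logarithmic Neumann potential of the flux with moment subtraction in $\Rea(z^j)$, then even reflection of the remainder, and you identify the least nonzero degree with the pointwise order, which forces $k\in\N$. Caccioppoli gives strong $H^1$ convergence, and under DMO you use a bounded $\rho$ and compose with $\Phi^{-1}(q+w)=w/a_\Phi+o(|w|)$.

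\textbf{The gap.} The error is in the input to the potential estimate. On a merely $C^1$ boundary there is no pointwise bound $|\Phi'(t)|=|t|^{o(1)}$ on the diameter. Locally, $\log\rho$ is (up to a smooth error) the conjugate function of the continuous tangent angle, and conjugates of continuous functions need not be locally bounded. Placing singularities of the type $\eps_n\sgn(t-t_n)/\log(1/|t-t_n|)$ in the tangent angle, at points $t_n\to0$ with $\eps_n\to0$, gives a $C^1$ boundary for which $\rho$ is essentially unbounded on every $[-r,r]$. So your bound $|g|\lesssim r^{pk-\eps'}$ on $[-r,r]$ fails. The distance distortion $r^{\pm\eps}$ you use to transfer the order is correct, but it comes from quasiconformal reflection and Mori's theorem, not from a derivative bound.

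\textbf{The fix.} What is true is $\rho\in L^Q_{\rm loc}$ for every finite $Q$ (Lemma~\ref{t12:lem:c1-chart}(a)). The potential estimate must then be run with $|g(t)|\le C\rho(t)|t|^{s}$, applying H\"older's inequality on both the near part $|t|<2|z|$ and the far part. This is Lemma~\ref{t12:lem:potential}, with remainder exponent $s+1-1/Q$. With $s=p(k-\eps)$ you get $k+\delta_*-p\eps-1/Q$, so every $\delta<\delta_*$ is reached in one step and no iteration is needed. The irreducible $1/Q$ loss is exactly why the endpoint $\delta_*$ requires DMO. The same integrability controls the rescaled flux $h_r(t)=r^{1-k}g(rt)$ in your Caccioppoli step:
\[
\|h_r\|_{L^Q(-2L,2L)}\le C_Lr^{\delta_*-1/Q}\|\rho\|_{L^Q(-2Lr,2Lr)}\to0 .
\]

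\textbf{Smaller points.}
\begin{enumerate}
\item The integer borderline case you worry about never occurs. Indeed $k+\delta_*=1+pk$, and $k<1+pk<k+1$ since $k(1-p)<1$ and $p<1$. So the $Q=\infty$ application in (ii) has a noninteger exponent and no logarithm.
\item In (ii) the $\eps$-loss disappears not merely because $\rho$ is bounded. It also needs part (i), which already gives $|U(z)|\le C|z|^k$ and hence $|g|\le C|t|^{pk}$.
\item The a priori bound you call the main obstacle is routine if done in graph coordinates, followed by the H\"older transfer. There the boundary weight is bounded and the restoring term can be dropped by sign (Lemma~\ref{lem:moser}, Proposition~\ref{prop:physical}). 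Running Moser directly on $B_r^+$ in conformal coordinates would have to face the unbounded weight $\rho$.
\item The mass limit needs $\ind_{\Om_r}\to\ind_{H_{a_\Phi}}$ a.e.\ together with dominated convergence. It is cleanest computed directly in $w$, with no Jacobian: on $H_{a_\Phi}\cap B_1$ one has $P_k^2=A^2|a_\Phi|^{-2k}s^{2k}\cos^2\bigl(k(\phi-\arg a_\Phi)\bigr)$. Your bookkeeping of the $|a_\Phi|$ factors mixes the two descriptions.
\end{enumerate}
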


\Needspace{6\baselineskip}
\subsubsection{Critical vanishing and the role of boundary regularity}
\label{paper:subsubsec:critical}

The next theorem identifies the unique asymptotic profile
when the vanishing order equals the noninteger critical value
$\kappa$.

\begin{theorem}\label{t134:ni:thm:main}
Let $\Om\subset\R^2$ be a bounded connected $C^1$ domain,
let $q\in\partial\Om$, and let $u$ satisfy
\eqref{paper:eq:local-problem}--\eqref{paper:eq:local-class}.
In the normalized graph coordinates \eqref{t134:eq:graph},
assume that $q=0$, $u(0)=0$, and $\ord_0u=\kappa\notin\N$.
Set
\[
 \beta:=\frac{\pi}{2}\bigl(\kappa-\lfloor\kappa\rfloor\bigr),
 \qquad
 a_\kappa:=(\kappa\tan\beta)^{-\kappa},
\]
and define, in polar coordinates,
\[
 P_\pm(s,\theta)
 :=\pm\frac{a_\kappa}{\cos\beta}s^\kappa
       \cos(\kappa\theta-\beta),
 \qquad -\varepsilon<\theta<\pi+\varepsilon,
\]
where $\varepsilon>0$ is fixed and sufficiently small.
Then there is a unique $\sigma\in\{+,-\}$ such that,
for every $L>0$, as $r\rightarrow0$,
\begin{equation}\label{t134:ni:eq:convergence}
 r^{-\kappa}u\bigl(rx,ry+\psi(rx)\bigr)
 \longrightarrow P_\sigma(x,y)
 \quad\text{in }C^0(\overline{B_L^+})
 \text{ and strongly in }H^1(B_L^+).
\end{equation}
Moreover,
\begin{equation}\label{t134:ni:eq:physical}
 u(X)=P_\sigma(X)+o(|X|^\kappa)
 \qquad(X\to0,\ X\in\overline\Om),
\end{equation}
with a uniform remainder, and
\begin{equation}\label{t134:ni:eq:density-limit}
 \lim_{r\rightarrow0}r^{-2\kappa-2}
 \int_{\Om\cap B_r}u^2\dd X
 =D_*:=\frac{a_\kappa^2}{4(\kappa+1)\cos^2\beta}
 \left(\pi+\frac{\sin(2\beta)}{\kappa}\right)>0.
\end{equation}
\end{theorem}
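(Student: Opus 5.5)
The plan is a blow-up argument in the normalized graph coordinates \eqref{t134:eq:graph}, followed by a classification of the possible limits and a connectedness argument for uniqueness. The first step is the scaling. Set $u_r(x,y):=r^{-\kappa}u(rx,ry+\psi(rx))$. Since $\psi\in C^1$ with $\psi(0)=\psi'(0)=0$, each $u_r$ solves a divergence-form equation $\operatorname{div}(A_r\nabla u_r)=0$ in $B^+_{L}$. Its coefficients satisfy $A_r\to I$ uniformly, because the oscillation of $\psi'$ on $[-rL,rL]$ tends to $0$. On $\{y=0\}$, $u_r$ satisfies a conormal condition
\[
 \partial_{\nu_r}u_r=\lambda r\,u_r-\rho_r\,|u_r|^{p-1}u_r ,
\]
with $\rho_r\to1$. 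Here $\kappa-1=p\kappa$ makes the sublinear term scale-invariant, and the linear term carries the extra factor $r$ and disappears in the limit.

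The second step, which I expect to be the main obstacle, is two-sided control of the mass $H(r):=r^{-1}\int_{\partial B_r\cap\Om}u^2$ at exactly the rate $r^{2\kappa+1}$, which is what makes $\{u_r\}$ bounded and nondegenerate. Since $\ord_0u=\kappa$, we only know $H(r)\le C_\alpha r^{2\alpha+1}$ for $\alpha<\kappa$. I would use a Weiss-type functional
\[
 W(r)=r^{-2\kappa}\Bigl(\int_{\Om\cap B_r}|\nabla u|^2+\tfrac{2}{p+1}\int_{\partial\Om\cap B_r}|u|^{p+1}-\lambda\int_{\partial\Om\cap B_r}u^2\Bigr)-\kappa\, r^{-2\kappa-1}\int_{\partial B_r\cap\Om}u^2 ,
\]
computed in graph coordinates. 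Its derivative is a nonnegative homogeneity defect plus errors of size $\omega(r)\,r^{-1}$ times bounded quantities, where $\omega$ is the modulus of continuity of $\psi'$. Because $C^1$ does not make $\omega(r)/r$ integrable, I would not aim for exact monotonicity. Instead I would combine it with the frequency $N(r)=rD(r)/H(r)$:
\begin{enumerate}
\item Almost-monotonicity of $N$ together with $\ord_0u=\kappa$ gives $N(0^+)=\kappa$.
\item Hence $H(r)=r^{2\kappa+1+o(1)}$.
\item A contradiction argument then excludes $\liminf r^{-2\kappa-1}H(r)=0$ and $\limsup=\infty$. If the ratio $H(r)/r^{2\kappa+1}$ degenerated along a sequence, the rescalings normalized by $\sqrt{H}$ would converge to a homogeneous degree-$\kappa$ limit. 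That limit would satisfy either the homogeneous Neumann condition (ratio $\to\infty$, so the nonlinear term disappears) or the homogeneous Dirichlet condition (ratio $\to0$, so the nonlinear term dominates). Both force $\kappa\in\N$, a contradiction.
\end{enumerate}
With $H(r)\asymp r^{2\kappa+1}$, Caccioppoli and boundary H\"older estimates for conormal problems with continuous coefficients give compactness in $C^0(\overline{B_L^+})$ and weakly in $H^1$. Strong $H^1$ convergence then follows by testing the equation with $u_r$ and passing to the limit in the boundary terms via compact trace embeddings.

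Third, I would classify the limits. Any limit $P$ is harmonic in $\HH$ and satisfies $-\partial_yP=-|P|^{p-1}P$ on $\{y=0\}$. The defect term in $W'$, integrated over dyadic scales, shows that $P$ is $\kappa$-homogeneous, so $P=s^\kappa(a\cos\kappa\theta+b\sin\kappa\theta)$. The boundary conditions at $\theta=0$ and $\theta=\pi$ give two algebraic equations: $b\kappa=|a|^{p-1}a$, and the analogous relation at $\theta=\pi$ involving $\cos\kappa\pi$ and $\sin\kappa\pi$. Using $p\kappa=\kappa-1$ and the symmetry $\theta\mapsto\pi-\theta$, the phase must be $\beta=\tfrac\pi2(\kappa-\lfloor\kappa\rfloor)$, and the amplitude is forced to be $a_\kappa/\cos\beta$. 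Nondegeneracy excludes $P\equiv0$, so $P\in\{P_+,P_-\}$. Extending to the angle $(-\varepsilon,\pi+\varepsilon)$ only accommodates the graph coordinates.

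Fourth, I would prove uniqueness and the physical statements. The set of limits of $u_r$ as $r\to0$ is connected in $C^0(\overline{B_L^+})$, since $r\mapsto u_r$ is continuous and precompact. It is contained in the two-point set $\{P_+,P_-\}$, so it is a single $P_\sigma$, which gives \eqref{t134:ni:eq:convergence}. Undoing the graph change of variables, with $|\psi(x)|=o(|x|)$ and $P_\sigma$ uniformly continuous on compacts, yields \eqref{t134:ni:eq:physical} with a uniform remainder. Finally, \eqref{t134:ni:eq:density-limit} follows by integrating $P_\sigma^2$ over the half-disk, since the Jacobian of the graph map tends to $1$ and strong convergence holds:
\[
 \int_0^1\!\!\int_0^\pi s^{2\kappa+1}\cos^2(\kappa\theta-\beta)\,d\theta\,ds=\frac{1}{2(\kappa+1)}\cdot\frac12\Bigl(\pi+\frac{\sin 2\beta}{\kappa}\Bigr).
\]
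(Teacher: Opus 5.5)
There is a genuine gap in your second step, at the point you flagged, and it carries into the third.

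\emph{The frequency step.} Item (1), almost-monotonicity of $N(r)=rD(r)/H(r)$ giving $N(0^+)=\kappa$, is not available. You concede that the geometric error $\omega(r)/r$ is not integrable on a $C^1$ boundary, so it gives no almost-monotonicity of $N$ or $W$ in the usual sense. Worse, the frequency is not monotone even for the flat limit problem. Take $D=\int_{B_r^+}|\nabla V|^2+B$, $B(r)=\int_{I_r}|V|^{p+1}$, $H=\int_{S_r^+}V^2$. Green's and Pohozaev's identities give
\[
 N'(r)=\frac{2r}{H}\Bigl(\int_{S_r^+}V_r^2-\frac{(\int_{S_r^+}VV_r)^2}{H}\Bigr)
 +\frac{r}{(p+1)H}\Bigl(\frac{2B}{r}-(1-p)B'\Bigr).
\]
The last term has no sign. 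At the critical scaling $B\sim r^{2\kappa}$ it is of order $1/r$, the same as the main terms, so it cannot be handled perturbatively. In the subcritical regime it would be $O(r^{-k(1-p)})$ and integrable.

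\emph{Consequences for (2), (3) and the Weiss step.} The hypothesis $\ord_0u=\kappa$ alone gives only $\int_{\Om\cap B_r}u^2=O(r^{2\alpha+2})$ for $\alpha<\kappa$, and failure for $\alpha>\kappa$. It does not give $H(r)=r^{2\kappa+1+o(1)}$. Nor does it give the doubling needed to make the $\sqrt H$-normalized rescalings in (3) compact on large balls. The homogeneity of their limits in (3) was itself deduced from (1). Your third step has the same problem: integrating the Weiss defect over $[r_jR_1,r_jR_2]$ gives homogeneity only if $W(r_jR_2)-W(r_jR_1)\to0$. That requires $W$ to have a limit as $r\to0$, which a non-integrable error does not provide.

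\emph{How the paper fills the gap.} It uses no monotonicity for $u$ itself. Put $A(t)=\bigl(e^{(2\kappa+2)t}\int_{\Om\cap B_{e^{-t}}}u^2\bigr)^{1/2}$.

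For the upper bound, the paper maximizes $A(t)e^{-\delta|t-T_j|}$. This selects scales $s_j$ with the two-sided envelope $A(t)\le A(s_j)e^{\delta|t-s_j|}$. Rescaling by $A(s_j)$ gives $\mu_j=A(s_j)^{p-1}\to0$ and a global Neumann harmonic limit with $\int_{B_R^+}V^2\le CR^{2\kappa+2}e^{2\delta|\log R|}$. This limit vanishes because the degree window $(\kappa-\delta,\kappa+\delta)$ contains no integer; no homogeneity is used.

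With boundedness in hand, $F(t)=\mathcal W(u,e^{-t})$ converges to some $L\in\{0,W_*\}$, where $W_*$ is the Weiss value of $P_\pm$. This follows from compactness alone. Flat Weiss monotonicity of blow-up limits makes positive increments over unit time intervals vanish. Since $0$ and $W_*$ are the only homogeneous Weiss values, $F$ drops by a fixed amount whenever it lies in a compact interval avoiding them.

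Constant Weiss energy of every blow-up then gives homogeneity. Finally, $L=0$ would force $A\to0$. The same scale selection, now with $\mu_j\to\infty$, excludes this through a nonzero Dirichlet harmonic limit in the noninteger window.

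Once boundedness and nondegeneracy are established, the rest of your plan is fine. This covers the classification of homogeneous profiles and the physical expansion. It also covers your connectedness argument for the uniqueness of $\sigma$, a valid alternative to the paper's sign argument for $r^{-\kappa}u(r,\psi(r))$. The density computation is fine as well.
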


\Needspace{6\baselineskip}
The next theorem shows that, if $\partial\Om$ is
$C^{1,\mathrm{DMO}}$ near $q$ and  $\kappa$
is an integer, then the vanishing order of $u$ at $q$
cannot equal $\kappa$.

\begin{theorem}\label{thm:integer-critical-order}
Let $\Om\subset\R^2$ be a bounded connected domain, and let
$q\in\partial\Om$. Assume that $\partial\Om$ is
$C^{1,\mathrm{DMO}}$ in a neighborhood of $q$ and that $u$
satisfies the local problem
\eqref{paper:eq:local-problem}--\eqref{paper:eq:local-class}.
Suppose that $u(q)=0$ and
\[
 u\not\equiv0 \quad\text{in }\Om\cap B_r(q)
 \qquad\text{for every sufficiently small }r>0.
\]
If the critical value $\kappa$ is an integer, then the
vanishing order of $u$ at $q$ cannot equal $\kappa$.
\end{theorem}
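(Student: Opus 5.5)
We argue by contradiction: assume $\ord_q u=\kappa=m\in\N$ on a $C^{1,\mathrm{DMO}}$ boundary. We pass to conformal coordinates $U=u\circ\Phi$ on the half-disk. There $U$ is harmonic and satisfies
\[
 -\partial_yU(x,0)=|\Phi'(x)|\bigl(\lambda U-|U|^{p-1}U\bigr)(x,0).
\]
Under $C^{1,\mathrm{DMO}}$ regularity the factor $|\Phi'|$ is continuous, with the nonzero boundary value $|a_\Phi|$. So the order is preserved, $\ord_0U=m$, and the weight $|\Phi'(x)|=|a_\Phi|+o(1)$ does not change the homogeneity.

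The first step is a critical blow-up. Using the Weiss-type monotonicity adapted to the boundary reaction, we take the rescalings $U_r=r^{-m}U(r\,\cdot)$. We show they are bounded in $H^1(B_L^+)$ and that subsequential limits $P$ are nontrivial, by nondegeneracy at the exact order $\kappa$. The DMO condition enters here: it makes the error from $|\Phi'(rx)|-|a_\Phi|$ integrable in the monotonicity formula, so the Weiss energy has a limit. Hence every blow-up limit is $m$-homogeneous, harmonic in $\HH$, and satisfies the limiting critical law
\[
 -\partial_yP=-|a_\Phi|\,|P|^{p-1}P
 \quad\text{on }\{y=0\}.
\]
The linear term disappears since $m>m-1$.

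The second step classifies homogeneous solutions of this law and shows that none is nontrivial when $\kappa\in\N$. Write $P=s^m g(\theta)$. Harmonicity gives
\[
 P=s^m\bigl(c_1\cos(m\theta)+c_2\sin(m\theta)\bigr).
\]
We then use the boundary law on $\theta=0$ and $\theta=\pi$. On $\theta=0$, $-\partial_yP=-s^{m-1}mc_2$, while the reaction equals $-|a_\Phi|\,|c_1|^{p-1}c_1\,s^{pm}$. Since $pm=m-1$ exactly, this forces
\[
 mc_2=|a_\Phi|\,|c_1|^{p-1}c_1 .
\]
On $\theta=\pi$, $P=(-1)^m c_1 s^m$ and the outward derivative gives
\[
 (-1)^{m}mc_2=-(-1)^{m}|a_\Phi|\,|c_1|^{p-1}c_1 ,
\]
after accounting for the sign of the odd power and the $\theta$-derivative. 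Comparing the two relations gives $c_2=-c_2$, so $c_2=0$, and then $c_1=0$. Thus $P\equiv0$, which contradicts nondegeneracy. This is exactly the mechanism behind the formula $\beta=\frac{\pi}{2}(\kappa-\lfloor\kappa\rfloor)$ in Theorem~\ref{t134:ni:thm:main}: with $\beta=0$, $\tan\beta=0$, and $a_\kappa$ blows up. So no profile exists at integer $\kappa$.

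The main obstacle is the nondegeneracy step, namely excluding the possibility that $U_r$, normalized by $r^{-m}$, tends to zero while $\ord_0 U=m$ with a logarithmic-type defect, for instance $\int_{B_r^+}U^2\sim r^{2m+2}/|\log r|^{\gamma}$. In that case we would renormalize by $H(r)^{1/2}$, the $L^2$ norm on the sphere of radius $r$. The rescaled reaction then carries the factor $(r^m/H^{1/2})^{1-p}\to\infty$. We would instead show that the normalized limits are homogeneous Dirichlet profiles $\Ima(z^m)$. The reaction then forces a trace expansion of order $(m-1)/p=m$ via a supercritical-type boundary comparison. This is incompatible with a pure Dirichlet leading term, since $\partial_y\Ima(z^m)=mx^{m-1}\neq0$ would need to be balanced by a trace of size $s^{m}$ at the same order, again contradicting the sign structure above. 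The DMO hypothesis is used to control the frequency function's error term through the Dini integral, giving an almost monotone frequency and thereby ruling out these slowly varying defects.
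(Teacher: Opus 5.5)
Your classification of $m$-homogeneous solutions is correct; it is the same computation as Step~4 of the paper's proof of Theorem~\ref{t134:int:thm:weighted}. The argument has a genuine gap, though, exactly where you place the main obstacle. The asserted ``nondegeneracy at the exact order $\kappa$'' does not follow from any Weiss-type monotonicity, and it carries the whole content of the theorem. Once no nonzero $m$-homogeneous solution exists, monotonicity only tells you that the Weiss limit is $0$ and that the natural rescalings $r^{-m}U(r\,\cdot)$ tend to zero. The paper proves exactly this in Step~4. It is fully compatible with $\ord_0U=m$, which only says that the density $A(t)$ is subexponentially bounded and not $O(e^{-\varepsilon t})$. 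So ``$P\equiv0$ contradicts nondegeneracy'' is circular.

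Two further claims in your first step also fail as stated:
\begin{enumerate}
\item The bound $\int_{B_r^+}U^2=O(r^{2m+2})$, which you need for $H^1$-boundedness of $U_r$, is not automatic at integer order. The degree window now admits the Neumann profile $\Rea z^m$, and the paper needs a separate sign argument on the cosine projection (Step~3) to get it.
\item DMO of the tangent gives continuity of $|\Phi'|$, which is all the paper uses. It does not in general give a Dini modulus for $|\Phi'|$, so a Weiss error integrable in $dr/r$ is not available.
\end{enumerate}

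Your fallback does not close the gap. Normalizing by the density does give Dirichlet limits $\sigma\Ima z^m$, but only after a scale-selection (envelope) argument, not from an unproved almost-monotone frequency. The fine trace of size $d^{1/p}$ with profile proportional to $\sgn(x)^{m-1}|x|^m$ is indeed what the paper recovers in Step~7. However, this gives no contradiction at a single scale. Since $d^{1/p}\ll d$, the fine trace need not be balanced by any homogeneous degree-$m$ term. Up to a constant and zero-trace terms, it is the trace of $d^{1/p}\bigl(\Rea z^m-\tfrac2\pi\Ima(z^m\log z)\bigr)$. The $\log|z|\,\Ima z^m$ part of this correction is a slow logarithmic drift of the Dirichlet amplitude.

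A contradiction appears only when you follow this drift across scales, and that is the missing idea. The paper uses the sine projection
\[
 b(t)=\tfrac2\pi\int_0^\pi e^{mt}U(e^{-t+i\theta})\sin(m\theta)\dd\theta,
\]
which satisfies $b''-2mb'=-\frac{2m}{\pi}(v_+-(-1)^mv_-)$ with $v_\pm(t)=e^{mt}U(\pm e^{-t},0)$. The argument then runs as follows.
\begin{enumerate}
\item The fine trace forces $b'(s_j)/d_j^{1/p}\to\frac{2\sigma}{\pi}(m/\rho_0)^{1/p}$, where $\sigma=\sgn b(s_j)$ and $\rho_0=|a_\Phi|$. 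Thus $|b|$ is pushed upward as $r\to0$.
\item The scales $s_j$ are chosen by maximizing $|b(s)|e^{-\delta(T_j-s)}$ up to a last crossing time $T_j$, which guarantees $\sigma b'(s_j)\le0$.
\item The non-exponential decay of $b$ (Step~5) is what makes such scales exist.
\end{enumerate}
The sign in the first item contradicts the second.
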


\begin{remark}
\label{rem:c1-integer-critical-example}
The $C^{1,\mathrm{DMO}}$ hypothesis in
Theorem~\ref{thm:integer-critical-order} cannot be replaced by
$C^1$ regularity. Here is a concrete local example with
$p=\tfrac12$, $\lambda=1$, and $\kappa=2$.
Set $r_*=e^{-4}$, $c=\pi/2$, and, in the upper half-disk, define
\[
 L(z)=\Log(i/z),\qquad
 U(z)=\Rea\!\left(\frac{z^2}{L(z)}\right),\qquad U(0)=0,
\]
where $\Log$ is the principal branch. For $0<|t|<r_*$, put
\[
 T(t)=\log\frac1{|t|},\qquad
 \rho(t)=
 \frac{\pi\bigl(T(t)^2+T(t)+c^2\bigr)}
 {T(t)^{1/2}\bigl(T(t)^2+c^2\bigr)^{3/2}
  \bigl(1-|t|\sqrt{T(t)/(T(t)^2+c^2)}\bigr)},
\]
and define the holomorphic map
\[
 \Phi(z)=\int_0^z
 \exp\!\left\{\frac1{\pi i}
       \int_{-r_*}^{r_*}\frac{\log\rho(t)}{t-\zeta}\dd t
      \right\}\dd\zeta,\qquad \Phi(0)=0,
\]
with the integral from $0$ understood in the improper sense.
For sufficiently small $R>0$, the map $\Phi$ is conformal on
$B_R^+$. The domain $\Om=\Phi(B_R^+)$ has a $C^1$ boundary near
$0$, with horizontal tangent there, and
$u=U\circ\Phi^{-1}$ is a nontrivial local solution of
\eqref{paper:eq:local-problem}--\eqref{paper:eq:local-class}
with the above $p$ and $\lambda$. Moreover,
\[
 \int_{\Om\cap B_r}u^2\dd X
 \asymp r^6\left(\log\frac1r\right)^4
 \qquad(r\to0),
\]
so that $u(0)=0$ and $\ord_0u=2=\kappa$.
Thus integer critical vanishing order can occur on a $C^1$ boundary.
\end{remark}

\subsubsection{Supercritical vanishing, boundary traces and normal fluxes}
\label{paper:subsubsec:supercritical}

\begin{theorem}\label{t15:thm:main}
Let $\Om\subset\R^2$ be a bounded connected domain whose boundary is
$C^1$ near $q\in\partial\Om$, and let $u$ be a real local solution
satisfying \eqref{paper:eq:local-problem}--\eqref{paper:eq:local-class}, with
$u(q)=0$. Suppose
\[
 m:=\ord_q u>\kappa.
\]
Fix any local conformal map $\Phi$ at $q$ and put
$U=u\circ\Phi$.
\begin{enumerate}[label=\textup{(\roman*)}]
\item Then the vanishing order $m$ is finite and $m$ is an integer.
Put
\begin{equation*}
 \ell:=\frac{m-1}{p}>m.
\end{equation*}
There is a unique $A\in\R\setminus\{0\}$, relative to the fixed
conformal map, such that, for every
$0<\delta<\min\{1,\ell-m\}$,
\begin{equation}\label{t15:eq:conformal-expansion}
 U(z)=A\Ima(z^m)+O(|z|^{m+\delta})
 \qquad(z\to0,\ y\ge0),
\end{equation}
uniformly on the closed half-disk.
The full family of rescalings converges as $r\rightarrow0$:
\begin{equation*}
 r^{-m}U(rz)\longrightarrow A\Ima(z^m)
\end{equation*}
in $C^0(\overline{B_L^+})$ and strongly in $H^1(B_L^+)$ for every
fixed $L>0$. Moreover, the boundary trace in the original
coordinates satisfies, for every
$0<\beta<\ell$,
\begin{equation*}
 |u(Q)|\le C_\beta|Q-q|^\beta
 \qquad(Q\in\partial\Om,\ Q\to q).
\end{equation*}
The two boundary vanishing orders are exactly
\begin{equation*}
 \ord_q^{\partial,\infty} u=\ord_q^\partial u=\ell.
\end{equation*}

\item If, in addition, $\partial\Om$ is $C^{1,\mathrm{DMO}}$ near $q$,
then, after reducing $R$, the same conformal map $\Phi$ belongs to
$C^1(\overline{B_R^+})$ and satisfies
\[
 a_\Phi:=\Phi'(0)\ne0,\qquad
 0<c\le|\Phi'(z)|\le C
 \quad\text{for all }z\in\overline{B_R^+}.
\]
Put
\[
 a_0:=|a_\Phi|,\qquad
 e=a_\Phi/a_0,\qquad B=A/a_0^m\ne0.
\]
Choose the associated  graph parametrization
\begin{equation*}
 \gamma(s)=q+e\{s+i\psi(s)\},\qquad
 \psi(0)=\psi'(0)=0,
\end{equation*}
with the domain on the side of the inward tangent normal $ie$.
In the original coordinates, $u$ admits the asymptotic expansion
\begin{equation*}
 u(q+w)=B\Ima\bigl((\overline e w)^m\bigr)+o(|w|^m)
 \qquad(w\to0,\ q+w\in\overline\Om),
\end{equation*}
uniformly in the direction of approach. With
\[
 F_r(x,y)=q+e\{rx+i(ry+\psi(rx))\},
\]
the full family of rescalings converges as $r\rightarrow0$:
\begin{equation*}
 r^{-m}u(F_r(x,y))\longrightarrow B\Ima((x+iy)^m)
\end{equation*}
in $C^0(\overline{B_L^+})$ and strongly in $H^1(B_L^+)$ for every
fixed $L>0$. Furthermore, the normalized $L^2$-mass satisfies
\begin{equation*}
 \lim_{r\rightarrow0}r^{-2m-2}
       \int_{\Om\cap B_r(q)}u^2\dd X
 =\frac{\pi B^2}{4(m+1)}>0.
\end{equation*}
Finally, define
\begin{equation*}
 G_B(x)=\sgn(Bx^{m-1})(m|B|)^{1/p}|x|^\ell,\qquad G_B(0)=0.
\end{equation*}
Then, as $r\rightarrow0$, the boundary trace and flux satisfy
\begin{align}
 r^{-\ell}u(\gamma(rx))&\longrightarrow G_B(x),
       \label{t15:eq:physical-fine-trace}\\
 r^{1-m}|u(\gamma(rx))|^{p-1}u(\gamma(rx))&\longrightarrow mBx^{m-1},
       \notag\\
 r^{1-m}\partial_{\nu_{\rm in}}u(\gamma(rx))
       &\longrightarrow mBx^{m-1},
       \label{t15:eq:physical-normal}
\end{align}
locally uniformly for $x\in\R$. Here $\nu_{\rm in}$ is the
inward unit normal at the displayed boundary point. The solution
is locally $C^1$ up to this boundary, so its normal derivative
in \eqref{t15:eq:physical-normal} is classical.

\end{enumerate}
\end{theorem}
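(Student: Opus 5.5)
Part (i) is proved entirely in conformal coordinates; part (ii) then transfers it to physical coordinates. Fix $\Phi$ and put $U=u\circ\Phi$. Then $U$ is harmonic in $B_R^+$ and satisfies
\[
-\partial_yU(x,0)=|\Phi'(x)|\bigl(\lambda U-|U|^{p-1}U\bigr)(x,0).
\]
On a $C^1$ boundary, $|\Phi'|$ is only $|x|^{o(1)}$-distorted, and $U$ inherits order $m$ from $u$ up to arbitrarily small losses. Finiteness of $m$ comes from Proposition~\ref{t15:local-sucp}.

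\emph{Step 1: the leading term has zero trace.} I would take blow-ups $U_r=U(r\cdot)/\|U(r\cdot)\|_{L^2(B_1^+)}$ along a sequence with doubling control, which the frequency/finite-order information provides. Since $m>\kappa$, the sublinear flux is of size $r^{1+pm-m}\cdot(\text{normalization})^{p-1}$ relative to $\partial_y U_r$, so it blows up unless the trace of the limit vanishes. Testing the boundary condition against nonnegative test functions supported where the limit trace $V(x,0)\neq0$ gives a contradiction. Hence every blow-up $V$ is harmonic in the half-plane with $V=0$ on $\R$. After odd reflection, $V$ is entire harmonic of homogeneity set by the growth, so $V=c\operatorname{Im}(z^n)$. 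This forces $m$ to be an integer and gives the profile $A\operatorname{Im}(z^m)$ up to normalization.

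\emph{Step 2: the rate estimate.} To obtain uniqueness of $A$ and the rate $O(|z|^{m+\delta})$, I would write $U=U_D+U_N$. Here $U_D$ is the harmonic extension with the Dirichlet trace $U(\cdot,0)$, and $U_N$ is harmonic with zero trace, so $U_N$ is odd-reflectable and analytic. The key nonlinear bound is on the trace. Set $g=|U|^{p-1}U(\cdot,0)$. The flux $-\partial_yU$ is controlled by $|x|^{m-1}$ via interior/boundary gradient bounds, and $g=\lambda U+|\Phi'|^{-1}\partial_yU$. Hence $|U(x,0)|^p\lesssim |x|^{m-1-\epsilon}$, that is $|U(x,0)|\lesssim|x|^{\ell-\epsilon}$. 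This gives the trace bound $|u(Q)|\le C_\beta|Q-q|^\beta$ for $\beta<\ell$. Since $\ell>m$, the Poisson extension $U_D$ is $O(|z|^{m+\delta})$ for $\delta<\ell-m$. Truncating the Taylor series of the analytic $U_N$ then yields \eqref{t15:eq:conformal-expansion} with unique $A$ and full-family convergence. The restriction $\delta<1$ comes from the next Taylor term.

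\emph{Step 3: exact boundary orders.} The lower bound $\ell$ is Step 2. For the matching upper bound, the expansion gives $\partial_yU(x,0)=mAx^{m-1}+o(|x|^{m-1})$, using boundary gradient estimates for $U_D$. Feeding this into the boundary equation gives
\[
|U|^{p-1}U(x,0)\sim mA|\Phi'(x)|^{-1}x^{m-1}.
\]
Transporting to physical arclength gives both $\ord^{\partial,\infty}=\ord^\partial=\ell$ exactly.

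\emph{Part (ii).} Under $C^{1,\mathrm{DMO}}$, the nondegenerate conformal map of \cite{Vita} gives $\Phi\in C^1(\overline{B_R^+})$ with $\Phi'(0)=a_\Phi\neq0$. Composing the expansion with $\Phi^{-1}(w)=\overline{e}w/a_0+o(|w|)$ gives the physical expansion with $B=A/a_0^m$. The graph rescalings $F_r$ converge because $\psi(rx)/r\to0$ in $C^1$. The density limit is the integral of $B^2\operatorname{Im}(z^m)^2$ over the half-disk. For the trace and flux limits, Lemma~\ref{t15:lem:boundary-c1} gives $C^1$ regularity up to the boundary, and $\partial_{\nu_{\rm in}}u=|\Phi'|^{-1}\partial_yU\circ\Phi^{-1}$. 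Using $|\Phi'(x)|\to a_0$ yields the flux limit $mBx^{m-1}$. Inverting $t\mapsto|t|^{p-1}t$ in the boundary law, where the $\lambda u$ term is of lower order, gives $G_B$.

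\emph{Main obstacle.} I expect Step 1 to be hardest. The blow-up normalization must be compatible with the singular sublinear flux, and one must rule out a limit with nonzero trace, using only $C^1$ boundaries where $|\Phi'|$ may degenerate or blow up subpolynomially. I would first try a Weiss/Almgren-type argument on $U$ with the boundary energy term $\int|\Phi'||U|^{p+1}$, which carries a favorable sign. This should give doubling and hence compact blow-ups.
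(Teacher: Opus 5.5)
There is a genuine gap. It sits in Step~2, which carries the whole theorem, and it recurs in Step~3 and part~(ii).

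\textbf{The trace bound in Step~2 is circular.} Your reduction at the end of Step~2 is sound once you know $|U(t,0)|\lesssim|t|^\beta$ for some $\beta\in(m,\ell)$: split off the Poisson extension of the trace, odd-reflect the rest, and you get the expansion. This is the paper's Lemma~\ref{t15:lem:dirichlet}. Note that $U_D$ itself is not $O(|z|^{m+\delta})$; its far-field moments $a_j\Ima z^j$ must be merged with the odd part. The same lemma also gives $m\in\N$ directly, because the least nonzero degree equals the pointwise order. So Step~1 is unnecessary. Its Weiss/Almgren route would in any case produce Pohozaev terms in $x\rho'$, uncontrolled on a $C^1$ boundary, and finite order alone does not give doubling.

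The problem is how you obtain the trace bound. You assert $-\partial_yU(x,0)=O(|x|^{m-1})$ ``via interior/boundary gradient bounds'', but this cannot work:
\begin{itemize}
\item Interior estimates control $\nabla U$ only at distance $\gtrsim|x|$ from the diameter.
\item On the diameter the normal derivative \emph{is} the datum $\rho(\lambda U-|U|^{p-1}U)$. Any boundary gradient bound must therefore already control $|U(x,0)|^p$, which is what you are trying to prove.
\item A priori you only know $|U(x,0)|^p\lesssim|x|^{p(m-\epsilon)}$. The inequality $pm<m-1$ is exactly the hypothesis $m>\kappa$, so this bound is much larger than $|x|^{m-1}$.
\item On a merely $C^1$ boundary, $\rho=|\Phi'|$ is only in $L^Q_{\rm loc}$ for finite $Q$. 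The $|z|^{o(1)}$ distortion holds for $\Phi^{\pm1}$ in the H\"older sense, not pointwise for $\Phi'$ on the diameter, so you cannot divide by $|\Phi'|$ pointwise either.
\end{itemize}

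\textbf{The missing idea.} You need a comparison argument that uses the restoring sign and the monotonicity of $s\mapsto|s|^{p-1}s$. The paper runs it in physical coordinates (Lemma~\ref{t15:c1:lem:cone} and Proposition~\ref{t15:c1:prop:trace}). At a boundary point $Q$, take $R=c|Q-q|$, $M=C_sR^s$ with $s\in(\kappa,m)$, and $a<1$. Set $W=C_0MR^{-a}\Rea(h+y-ix)^a$ with $h=R\,(KM^{1-p}/R)^{1/(1-a(1-p))}$. This $W$ is a supersolution because $|W|^{p-1}W$ dominates the negative part of $\partial_\nu W$ on the Lipschitz graph. Comparison gives $|u(Q)|\le CR^{(s-a)/(1-a(1-p))}$, and these exponents tend to $\ell$ as $s\uparrow m$ and $a\uparrow1$. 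The H\"older bounds for $\Phi$ then give the trace bound in conformal coordinates.

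\textbf{The same problem in Step~3 and part~(ii).} A uniform $C^0$ expansion of $U$ does not yield $\partial_yU(x,0)=mAx^{m-1}+o(|x|^{m-1})$. The normal derivative of $U_D$ on the diameter is the Dirichlet-to-Neumann map of the trace. It depends on the regularity of the trace, not just its size, and that regularity again has to come from the nonlinear law.
\begin{itemize}
\item In part~(i) the paper avoids pointwise flux statements. The rescaled weighted flux converges to $mAx^{m-1}$ only in $H^{-1/2}_{\rm loc}$. Testing against $\varphi(t/r)$ and applying H\"older's inequality on the arcs $\Phi([r,2r])$ gives $\ord_q^\partial u\le\ell$.
\item In part~(ii) the locally uniform trace and flux limits come from a sandwich argument (Lemmas~\ref{t15:w:comparison} and~\ref{t15:w:fine}). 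With $\eps_r=r^{\ell-m}$, let $Q^\pm_\eta$ be the Poisson extensions of $\sgn(q_A\pm\eta)(|q_A\pm\eta|/\rho_0)^{1/p}$. Then $P+\eps_rQ^\pm_\eta\pm M_rH$ are super- and subsolutions once $\eta\sim E_r+\eps_r+\zeta_r$. Evaluating at the center pins the nonlinear flux to within $\eta$, and the normal-derivative limit is then read off from the boundary law.
\end{itemize}
Your step ``invert $t\mapsto|t|^{p-1}t$'' presupposes exactly this uniform flux control.
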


\subsection{Organization of the paper}
\label{paper:subsec:organization}
Section~\ref{sec:prelim} introduces the boundary regularity assumptions
and conformal coordinates and establishes the local estimates used
throughout the paper. Section~\ref{sec:nodal} proves the nodal-domain
bound and boundary strong unique continuation.
Section~\ref{sec:subcritical} establishes the subcritical asymptotics.
Section~\ref{sec:critical} identifies the
noninteger critical profiles and excludes integer critical order
under $C^{1,\mathrm{DMO}}$ regularity. Section~\ref{sec:supercritical}
proves supercritical quantization and exact trace and flux
asymptotics.

\section{Preliminaries}\label{sec:prelim}

\subsection{Boundary regularity and local coordinates}
Identify $\R^2$ with $\C$, writing $X$ for   points in $\Om$ and
$z=x+iy$ for conformal coordinates. Set
\[
 \begin{gathered}
 \HH=\{y>0\},\qquad B_R^+=B_R(0)\cap\HH,\qquad
 I_R=(-R,R)\times\{0\},\\
 S_R^+=\partial B_R(0)\cap\HH,\qquad
 Q_R^+=(-R,R)\times(0,R).
 \end{gathered}
\]
After translation and rotation, a $C^1$ boundary has the form
\begin{equation}\label{t134:eq:graph}
 q=0,\qquad \Om=\{(x,y):y>\psi(x)\}\quad  {local},
 \qquad\psi\in C^1,\quad\psi(0)=\psi'(0)=0.
\end{equation}
Define the rescaled domains and graph maps by
\begin{equation*}
 \Om_r=r^{-1}\Om,\qquad \psi_r(x)=r^{-1}\psi(rx),
 \qquad T_r(x,y)=(x,y+\psi_r(x)).
\end{equation*}
On each fixed bounded set,
\begin{equation*}
 T_r\longrightarrow\operatorname{id}\quad\text{in }C^1,
 \qquad \det DT_r=1.
\end{equation*}

The boundary is $C^{1,\mathrm{DMO}}$
near $q$ if the local graph representation \eqref{t134:eq:graph}
can be chosen on $J=(-r_0,r_0)$, for some $r_0>0$, with
\begin{equation*}
 \psi\in C^1(\overline J),\qquad
 \int_0^{r_0}\eta_{\psi'}(r)\,\frac{\dd r}{r}<\infty.
\end{equation*}
Here $J_r(x_0)=J\cap(x_0-r,x_0+r)$ and
\begin{equation}\label{t12:eq:dmo}
 \begin{split}
 \langle\psi'\rangle_{x_0,r}
   &=\frac1{|J_r(x_0)|}\int_{J_r(x_0)}\psi'(t)\dd t,\\
 \eta_{\psi'}(r)
   &=\sup_{x_0\in J}\frac1{|J_r(x_0)|}
       \int_{J_r(x_0)}
       |\psi'(t)-\langle\psi'\rangle_{x_0,r}|\dd t.
 \end{split}
\end{equation}

\subsection{Local growth and vanishing orders}

For a continuous function $v$ on the closure of a domain $E$ near
$x_0\in\overline E$, define its \emph{pointwise vanishing order} by
\begin{equation*}
 \ord^{\infty}_{x_0}v:=\sup\left\{b\ge0:
 \sup_{\overline E\cap B_r(x_0)}|v|=O(r^b)
 \text{ as }r\rightarrow0\right\}.
\end{equation*}

We first record a local $L^2$-$L^\infty$ estimate whose constant
is independent of the coefficient of the restoring boundary term.

\begin{lemma}\label{lem:moser}
Let $A$ be a measurable symmetric matrix on $Q_1^+$ such that, for
some $0<\theta\le\Theta<\infty$,
\[
 \theta|\xi|^2\le\xi^TA(X)\xi,\qquad
 |A(X)\xi|\le\Theta|\xi|
 \quad\text{for a.e. }X\text{ and every }\xi\in\R^2.
\]
Let $a,b\in L^\infty(I_1)$ satisfy $0\le a\le1$ and $b\ge0$.
Suppose $v\in H^1(Q_1^+)\cap L^\infty(Q_1^+)$ is real and
\begin{equation}\label{eq:graph-weak}
 \int_{Q_1^+}A\nabla v\cdot\nabla\zeta\dd X
 =\int_{I_1}(av-b|v|^{p-1}v)\zeta\dd s
\end{equation}
for every $\zeta\in H^1(Q_1^+)$ supported away from
$\partial Q_1^+\setminus I_1$. Then
\begin{equation}\label{eq:moser}
 \|v\|_{L^\infty(Q_{1/2}^+)}
 \le C(\theta,\Theta)\|v\|_{L^2(Q_1^+)}.
\end{equation}
The constant is independent of $p$, $b$, and the a priori supremum of
$v$.
\end{lemma}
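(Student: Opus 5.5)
We prove Lemma~\ref{lem:moser} by De Giorgi--Moser iteration applied separately to $v_+$ and $v_-$. The key observation is that the restoring term has a favorable sign when tested against positive truncations, so it can be discarded. By symmetry ($-v$ satisfies the same identity with the same $a,b$) it suffices to bound $\sup_{Q_{1/2}^+} v_+$. Fix $k\ge0$, a cutoff $\eta\in C_c^\infty$ with $\eta\equiv1$ on $Q_\rho^+$, support in $Q_R^+\cup I_R$ ($1/2\le\rho<R\le1$) and $|\nabla\eta|\le C/(R-\rho)$, and a convex increasing power-type function $G$ (truncated at a large level so that test functions stay in $H^1$; the a priori bound $v\in L^\infty$ makes this legitimate but enters no constant). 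The test function is $\zeta=\eta^2 G((v-k)_+)$, which is admissible.

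\emph{Sign step.} On the set where $\zeta\ne0$ on $I_1$ we have $v>k\ge0$. Hence $-b|v|^{p-1}v\,\zeta\le0$ there, and this boundary contribution can be dropped. This is the only point where $b$ and $p$ enter, so the constant is independent of both.

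\emph{Linear boundary term.} The remaining boundary term is $\int_{I_1}a v\zeta\le\int_{I_1}v_+\eta^2G((v-k)_+)$, using $0\le a\le1$. We split $v=(v-k)_++k$ on the support and control the trace by the one-dimensional trace inequality
\[
\int_{I_1}w^2\,ds\le \epsilon\int_{Q_1^+}|\nabla w|^2+C\epsilon^{-1}\int_{Q_1^+}w^2
\]
for $w=\eta\,H((v-k)_+)$, where $H$ is the standard companion of $G$ (so that $H'^2=G'$). Choosing $\epsilon$ small relative to $\theta$, the gradient part is absorbed into the ellipticity term $\theta\int\eta^2|\nabla H|^2$. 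This yields a Caccioppoli inequality
\[
\int|\nabla(\eta H((v-k)_+))|^2\le C(\theta,\Theta)\Bigl(\|\nabla\eta\|_\infty^2+1\Bigr)\int_{\operatorname{supp}\eta}\bigl(H^2+k^2\text{-terms}\bigr).
\]
The $k$ contribution is handled by working with $\bar v=(v-k)_++k$ as in Gilbarg--Trudinger, Theorem~8.15; with $k$ chosen as $0$ and powers $H(s)=(s+\tau)^{\beta}$, the $\tau$-shift takes care of it.

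\emph{Iteration.} We combine the Caccioppoli inequality with the Sobolev embedding $H^1(Q_R^+)\hookrightarrow L^{2\chi}$ for any $\chi>1$ in two dimensions. Iterating Moser's scheme with $\beta_j=\chi^j$ and radii $R_j=1/2+2^{-j-1}$ gives
\[
\sup_{Q_{1/2}^+}v_+\le C(\theta,\Theta)\,\|v_+\|_{L^2(Q_1^+)}.
\]
Alternatively, a De Giorgi level-set iteration with $k_j=K(1-2^{-j})$ avoids the power functions entirely and, since the truncations are Lipschitz, only needs the same sign observation. The result is independent of the a priori supremum because that supremum is used only to justify the test functions.

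\emph{Main obstacle.} The main obstacle is the bookkeeping of the linear boundary term $\int_{I_1}av\zeta$. Through the trace inequality its constant depends only on $\theta$ and the cutoff gradients, and we expect it to be the only genuinely boundary-specific step; the lateral boundary $\partial Q_1^+\setminus I_1$ is avoided by the support of $\eta$.
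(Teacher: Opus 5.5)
Your proposal is correct and is essentially the paper's argument. You discard the restoring term by its sign, absorb the linear boundary term with the trace inequality, and run a Moser iteration based on the two-dimensional Sobolev embedding; the paper differs only in testing with $\eta^2|v|^{m-2}v$, which handles both signs at once, instead of treating $v_\pm$ separately with truncated powers.

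One bookkeeping correction to your Caccioppoli inequality. Since $v_+G(v_+)$ is comparable to $\beta H(v_+)^2$, the boundary term carries a factor of order $\beta$. So $\epsilon$ must be taken of order $\theta/\beta$, not merely small relative to $\theta$, and your Caccioppoli constant grows like $\beta^2$ (the paper's factor $m^2$) rather than being independent of the exponent. This polynomial growth in the exponent is harmless in Moser's product, so the argument goes through once the inequality is stated with this dependence.
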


\begin{proof}
For $m\ge2$, test \eqref{eq:graph-weak} with
$\eta^2|v|^{m-2}v$, where $\eta$ vanishes near the artificial
boundary, and set $w=|v|^{m/2}$, $W=\eta w$. The restoring term
is nonpositive. Ellipticity and Young's inequality therefore give
\[
 \int|\nabla W|^2
 \le C\int w^2|\nabla\eta|^2+Cm\int_{I_1}W^2.
\]
The trace inequality
\[
 \int_{I_1}W^2\le\varepsilon\int|\nabla W|^2
                  +\varepsilon^{-1}\int W^2
\]
with $\varepsilon=c/m$ absorbs the boundary term and yields
\[
 \int|\nabla W|^2
 \le C\int\bigl(|\nabla\eta|^2+m^2\eta^2\bigr)|v|^m.
\]
The two-dimensional $H^1$--$L^4$ inequality now implies, for
$1/2\le s<t\le1$,
\begin{equation*}
 \|v\|_{L^{2m}(Q_s^+)}
 \le\left(\frac{Cm^2}{(t-s)^2}\right)^{1/m}
       \|v\|_{L^m(Q_t^+)}.
\end{equation*}
Iteration with $m_j=2^{j+1}$ and $t_j=1/2+2^{-j-1}$ proves
\eqref{eq:moser}.
\end{proof}

\begin{proposition}[Boundary estimate in the original coordinates]\label{prop:physical}\label{t12:lem:physical}
Let $q$ be a Lipschitz boundary point, and let $u$ satisfy the local
problem and regularity assumptions
\eqref{paper:eq:local-problem}--\eqref{paper:eq:local-class}. There are constants
$c,C_1,C_2,r_*>0$ such that
\begin{equation}\label{eq:physical}
 \|u\|_{L^\infty(\Om\cap B_{cr}(q))}
 \le\frac{C_1}{r}
 \|u\|_{L^2(\Om\cap B_{C_2r}(q))},
 \qquad 0<r<r_*.
\end{equation}
The pointwise and bulk $L^2$-vanishing orders agree:
\begin{equation}\label{t12:eq:pointwise-equality}
 \ord^{\infty}_qu=\ord_q u.
\end{equation}
In particular, if $\ord_q u=+\infty$, then for every $N\in\N$
there are $C_N,r_N>0$ such that
\begin{equation}\label{eq:physical-flat}
 |u(X)|\le C_N|X-q|^N,
 \qquad X\in\overline\Om\cap B_{r_N}(q).
\end{equation}
\end{proposition}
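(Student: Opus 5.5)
We will flatten the boundary by the bi-Lipschitz graph map and then apply Lemma~\ref{lem:moser} at every small scale. After translation and rotation we assume $q=0$ and $\Om\cap(\text{box})=\{y>\psi(x)\}$ with $\psi$ Lipschitz, $\psi(0)=0$, $\Lip\psi\le M$. The graph map $T(x,y)=(x,y+\psi(x))$ is bi-Lipschitz with $\det DT=1$ and constants depending only on $M$.

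\textbf{Setting up the rescaled problem.} For $0<r<r_*$ put
\[
 v_r(x,y)=u\bigl(rx,\,ry+\psi(rx)\bigr),\qquad (x,y)\in Q_1^+.
\]
Pulling back the weak form of \eqref{paper:eq:local-problem} (Green's formula with the normal-trace interpretation) gives \eqref{eq:graph-weak} with the following data:
\begin{itemize}
\item[] $A_r=(DT_r)^{-1}(DT_r)^{-T}$, which is uniformly elliptic with $\theta,\Theta$ depending only on $M$ (note $\det DT_r=1$ and $\psi_r$ has the same Lipschitz constant);
\item[] boundary weight $\sqrt{1+\psi'(rx)^2}\in[1,\sqrt{1+M^2}]$ from $dS$;
\item[] $a=r\lambda\sqrt{1+\psi'(rx)^2}$, which satisfies $0\le a\le 1$ once $r_*\le(\lambda\sqrt{1+M^2})^{-1}$;
\item[] $b=r\sqrt{1+\psi'(rx)^2}\ge0$.
\end{itemize}
The factor $r$ in $a$ and $b$ comes from the scaling of the conormal derivative.

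Since $u\in H^1\cap C(\overline\Om\cap B_R)$, we have $v_r\in H^1(Q_1^+)\cap L^\infty$. Lemma~\ref{lem:moser} then gives
\[
 \|v_r\|_{L^\infty(Q_{1/2}^+)}\le C(M)\,\|v_r\|_{L^2(Q_1^+)}.
\]
This uses the crucial fact that the constant in Lemma~\ref{lem:moser} is independent of $b$ and of the sign structure. Changing variables back, $\|v_r\|_{L^2(Q_1^+)}=r^{-1}\|u\|_{L^2(\Om\cap T(rQ_1^+))}$. The set $T(rQ_1^+)$ lies in $\Om\cap B_{C_2r}$ with $C_2=1+M$ (up to a harmless factor), and $T(rQ_{1/2}^+)$ contains $\overline\Om\cap B_{cr}$ for some $c=c(M)>0$. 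Together these give \eqref{eq:physical}.

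\textbf{Equality of orders.} The inequality $\ord_q u\ge\ord^\infty_q u$ is trivial: if $|u|\le Cr^b$ on $B_r$, then $\int_{\Om\cap B_r}u^2\le Cr^{2b+2}$.

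For the converse, suppose $\int_{\Om\cap B_r}u^2=O(r^{2\alpha+2})$. Then \eqref{eq:physical} gives
\[
 \sup_{\overline\Om\cap B_{cr}}|u|\le C r^{-1}r^{\alpha+1}=Cr^\alpha,
\]
so $\ord^\infty_q u\ge\alpha$. Taking the supremum over such $\alpha$ gives \eqref{t12:eq:pointwise-equality}, and \eqref{eq:physical-flat} follows by taking $\alpha=N$.

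\textbf{Main obstacle.} The main difficulty is justifying the weak formulation \eqref{eq:graph-weak} for $v_r$ with only $H^1\cap C$ regularity and normal traces on a Lipschitz boundary. We will read the Robin condition as the weak identity
\[
 \int\nabla u\cdot\nabla\zeta=\int_\Gamma\bigl(\lambda u-|u|^{p-1}u\bigr)\zeta\,dS,
\]
valid for test functions vanishing near $\partial B_{R_0}$. The right-hand side is integrable because $u$ is continuous. The identity is then transported under the bi-Lipschitz change of variables.

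A secondary point is the Moser iteration itself. In Lemma~\ref{lem:moser} the tests $|v|^{m-2}v\eta^2$ are admissible because $v$ is bounded. The sublinear term contributes $-\int b|v|^{p+m-1}\eta^2\le0$, and so can simply be dropped.
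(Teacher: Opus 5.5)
Your proposal is correct and follows the paper's proof essentially step for step: you flatten with the unit-Jacobian graph map, rescale so that the boundary coefficients become $a_r=\lambda rJ(r\,\cdot)\le1$ and $b_r=rJ(r\,\cdot)\ge0$, apply Lemma~\ref{lem:moser}, transfer back through the bi-Lipschitz inclusions, and deduce the equality of orders (including the infinite case) directly from \eqref{eq:physical}. One small wording point: the Moser constant is independent of the size of $b$ only because of the restoring sign $b\ge0$, which is what lets the sublinear term be discarded, so it is not independent of the sign structure.
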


\begin{proof}
In graph coordinates $T(s,t)=(s,\psi(s)+t)$, with $q=0$ and
$L=\Lip\psi$, the function $v=u\circ T$ satisfies
\begin{equation*}
 \int A\nabla v\cdot\nabla\zeta
 =\int J(s)(\lambda v-|v|^{p-1}v)\zeta,
\end{equation*}
where $\det DT=1$ and
\begin{equation}\label{eq:matrix}
 A=(DT)^{-1}(DT)^{-T}
 =\begin{pmatrix}1&-\psi'\\-\psi'&1+(\psi')^2\end{pmatrix},
 \qquad J=\sqrt{1+(\psi')^2}.
\end{equation}
The ellipticity constants and $\|J\|_\infty$ depend only on $L$.
For $v_r(Z)=v(rZ)$, the boundary coefficients in
Lemma~\ref{lem:moser} are $a_r=\lambda rJ(r\,\cdot)$ and
$b_r=rJ(r\,\cdot)$. Taking $r$ small enough that $a_r\le1$ gives
\begin{equation*}
 \|u\circ T\|_{L^\infty(Q_{r/2}^+)}
 \le C_Lr^{-1}\|u\circ T\|_{L^2(Q_r^+)}.
\end{equation*}
The bi-Lipschitz inclusions
$\Om\cap B_{cr}\subset T(Q_{r/2}^+)$ and
$T(Q_r^+)\subset\Om\cap B_{C_2r}$, together with $\det DT=1$,
prove \eqref{eq:physical}.

A pointwise bound of order $b$ gives the volume bound of order $b$.
Conversely, for every $0\le b<\ord_q u$, the defining volume bound
and \eqref{eq:physical} give $\|u\|_{L^\infty(\Om\cap B_r(q))}
=O(r^b)$. Taking suprema proves
\eqref{t12:eq:pointwise-equality}, including zero and infinite
order. In the latter case the same estimate for every integer $N$,
followed by continuity up to the boundary, gives
\eqref{eq:physical-flat}.
\end{proof}

\subsection{Conformal maps and the boundary equation}

A local conformal map at a boundary point $q$ is an injective holomorphic map
$\Phi:B_R^+\to\Om$ extending homeomorphically to the closed half-disk,
with $\Phi(0)=q$, the diameter mapped to a boundary arc, and
$\Om\cap B_\varepsilon(q)$ contained in its image for some $\varepsilon>0$.

The following is the local form of the boundary H\"older estimate
\cite[Lemma 1]{Warschawski}, together with the Riemann mapping and
Carath\'eodory theorems \cite{Pom}. The localization uses a Jordan
graph subdomain agreeing with $\Om$ near $q$; a Lipschitz graph
satisfies the local chord--arc hypothesis of that estimate.

\begin{lemma}[Conformal maps at Lipschitz boundaries]
\label{lem:chart}
At every Lipschitz boundary point $q$, there exist $r_0>0$ and a conformal map
$\Phi:B_{r_0}^+\to\Om$, where
$B_{r_0}^+=B_{r_0}(0)\cap\{\operatorname{Im}z>0\}$, with the
following properties. The map extends to a homeomorphism of its closure
onto its image, $\Phi(0)=q$, and the image of the diameter is a boundary
arc containing $q$ in its relative interior. For some $\alpha>0$,
\begin{equation}\label{eq:holder-chart}
 |\Phi(z)-q|\le C|z|^\alpha,
 \qquad z\in\overline{B_{r_0}^+}.
\end{equation}
No simple connectivity assumption on $\Om$ is required.
\end{lemma}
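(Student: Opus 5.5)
We have to prove Lemma~\ref{lem:chart}, the existence of a local conformal map at a Lipschitz boundary point with the H\"older bound \eqref{eq:holder-chart}.

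\textbf{Step 1: a Jordan subdomain.} Place $q$ at the origin with $\partial\Om\cap B_{R_0}$ the graph $y=\psi(x)$, $\psi$ Lipschitz with constant $L$, and $\Om$ lying above it. Consider the box
\[
 D:=\{(x,y):|x|<\rho,\ \psi(x)<y<\psi(x)+\rho\}.
\]
For $\rho$ small, $D\subset\Om$ and $D$ coincides with $\Om$ near $q$: since $\Om$ lies locally above the graph, $\Om\cap B_{\rho/(2(1+L))}=D\cap B_{\rho/(2(1+L))}$. The boundary $\partial D$ is a Jordan curve made of four Lipschitz arcs (two graphs and two vertical segments), meeting at corners whose angles are bounded below in terms of $L$. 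Hence $\partial D$ is a chord--arc (quasicircle) curve. The domain $D$ is simply connected regardless of the topology of $\Om$; this is why no simple connectivity of $\Om$ is needed.

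\textbf{Step 2: the map and its boundary behaviour.} By the Riemann mapping theorem there is a conformal $F:\mathbb D\to D$. Carath\'eodory's theorem extends $F$ to a homeomorphism $\overline{\mathbb D}\to\overline D$. Compose with a M\"obius map $\mathbb H\to\mathbb D$ sending $0$ to $F^{-1}(q)$ and sending a neighborhood of $0$ in $\R$ onto an arc of $\partial\mathbb D$ around $F^{-1}(q)$. The composition gives $G:\overline{\HH}\to\overline D$ with $G(0)=q$. Because $G$ is a homeomorphism on the closure, for $r_0$ small:
\begin{itemize}
\item the diameter $[-r_0,r_0]$ maps into the portion of $\partial D$ on the graph of $\psi$, with $q$ in the relative interior of its image;
\item $\Phi:=G|_{B_{r_0}^+}$ is injective and holomorphic into $D\subset\Om$, and is a homeomorphism of the closed half-disk onto its image.
\end{itemize}
Openness of the image near $q$ follows from invariance of domain applied to the homeomorphism $G$: $G(B_{r_0}^+\cup I_{r_0})$ is a relative neighborhood of $q$ in $\overline D$. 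So it contains $\overline D\cap B_\varepsilon(q)=\overline\Om\cap B_\varepsilon(q)$ for some $\varepsilon>0$.

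\textbf{Step 3: the H\"older estimate.} This is the only quantitative point and the main obstacle, though it is classical. I would invoke \cite[Lemma 1]{Warschawski}: a conformal map onto a domain whose boundary satisfies a local chord--arc condition is H\"older continuous up to the boundary, with exponent depending on the chord--arc constant. The bound transfers to $G$ on compact pieces of the closed half-plane, since the M\"obius map is smooth there.

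As a self-contained alternative, one can argue via quasiconformal reflection. Since $\partial D$ is a quasicircle, $F$ extends to a $K$-quasiconformal map of the plane. Mori's theorem \cite{Mori} then gives $|F(\zeta_1)-F(\zeta_2)|\le C|\zeta_1-\zeta_2|^{1/K}$ on compact sets. With $\alpha=1/K$ and $\Phi(0)=q$, this yields $|\Phi(z)-q|\le C|z|^\alpha$ on $\overline{B_{r_0}^+}$, which is \eqref{eq:holder-chart}.

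The delicate checks are two. First, the corners of $D$ do not spoil the chord--arc or quasicircle property, since the angles are bounded away from $0$. Second, the boundary arc mapped from the diameter lies strictly inside the graph part of $\partial D$, which is ensured by choosing $r_0$ small after fixing $\rho$.
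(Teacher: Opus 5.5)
Your proposal is correct and follows essentially the paper's route. You use a Jordan graph subdomain agreeing with $\Om$ near $q$, which is what removes any simple connectivity assumption on $\Om$, then the Riemann mapping and Carath\'eodory theorems, and then Warschawski's local H\"older estimate \cite[Lemma 1]{Warschawski}, whose chord--arc hypothesis the Lipschitz graph satisfies; your alternative via quasiconformal extension and Mori's theorem is also valid and parallels the argument the paper later uses for $C^1$ boundaries in Lemma~\ref{t12:lem:c1-chart}.
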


\begin{corollary}[Flatness in conformal coordinates]\label{cor:flat}
If $\ord_q u=+\infty$ and $U=u\circ\Phi$, then $U$ is harmonic,
continuous on the closed upper half-disk, and
\begin{equation}\label{eq:U-flat}
 |U(z)|\le C_M|z|^M\quad\text{near }0
 \quad\text{for every }M\in\N.
\end{equation}
Moreover, for every $M\in\N$,
\begin{equation}\label{eq:derivative-flat}
 |U_x(0,y)|\le C_M y^M\qquad(0<y\text{ sufficiently small}).
\end{equation}
\end{corollary}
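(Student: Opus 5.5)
Both claims are transferred from the physical domain to the half-disk. First, $U=u\circ\Phi$ is harmonic in $B_{r_0}^+$, since the composition of a harmonic function with a holomorphic map is harmonic, and $U$ is continuous on $\overline{B_{r_0}^+}$ by Lemma~\ref{lem:chart}. For \eqref{eq:U-flat}, fix $M$. Proposition~\ref{prop:physical}, estimate \eqref{eq:physical-flat}, gives $|u(X)|\le C_N|X-q|^N$ near $q$ for every $N$. Combining this with the H\"older bound \eqref{eq:holder-chart}, we get
\[
 |U(z)|=|u(\Phi(z))|\le C_N|\Phi(z)-q|^N\le C_NC^N|z|^{\alpha N}
\]
for $z$ small. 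The smallness requirement is that $\Phi(z)\in B_{r_N}(q)$, and this holds for $|z|$ small by \eqref{eq:holder-chart}. Choosing $N\ge M/\alpha$ yields \eqref{eq:U-flat}. No lower bound on $|\Phi'|$ is needed here, which is why only the Lipschitz chart is required.

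For \eqref{eq:derivative-flat}, I will use interior gradient estimates for harmonic functions on disks tangent-free to the boundary. Take a point $(0,y)$ with $y>0$ small. The disk $B_{y/2}((0,y))$ lies in $B_{r_0}^+$, and every point of it has modulus at most $3y/2$. The standard gradient estimate for harmonic functions gives
\[
 |\nabla U(0,y)|\le\frac{C}{y}\sup_{B_{y/2}((0,y))}|U|\le\frac{C}{y}\,C_{M+1}\Bigl(\frac{3y}{2}\Bigr)^{M+1}\le C_M' y^M.
\]
Here I apply \eqref{eq:U-flat} with exponent $M+1$. In particular $|U_x(0,y)|\le C_M'y^M$.

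Neither step uses the boundary equation. The only delicate point is that the H\"older exponent $\alpha$ in \eqref{eq:holder-chart} may be small. This is harmless, since flatness of every order absorbs any fixed power loss.
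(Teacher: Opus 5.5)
Your proposal is correct and follows essentially the same route as the paper. You compose the physical flatness estimate \eqref{eq:physical-flat} with the H\"older chart bound \eqref{eq:holder-chart}, taking $\alpha N\ge M$. You then apply the interior gradient estimate on $B_{y/2}((0,y))$, using \eqref{eq:U-flat} with exponent $M+1$.
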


\begin{proof}
Choose an integer $N$ with $\alpha N\ge M$. Equations
\eqref{eq:physical-flat} and \eqref{eq:holder-chart} give
$|U(z)|\le C_N|\Phi(z)-q|^N\le C'_N|z|^{\alpha N}$.
For \eqref{eq:derivative-flat}, use the interior gradient estimate in
$B_{y/2}((0,y))\subset B_{r_0}^+$:
\[
 |\nabla U(0,y)|\le\frac{C}{y}
 \sup_{B_{y/2}((0,y))}|U|\le C_M y^M,
\]
where \eqref{eq:U-flat} is applied with exponent $M+1$.
\end{proof}

\begin{remark}\label{rem:order-distortion}
For a general Lipschitz boundary, the preceding argument preserves
\emph{infinite pointwise flatness}. To compare finite bulk
$L^2$-vanishing orders, we use the additional control of the
conformal map provided by $C^1$ regularity below.
\end{remark}

\begin{lemma}[Conformal maps at $C^1$ boundaries]
\label{t12:lem:c1-chart}
Fix a local conformal map $\Phi$ at a $C^1$ boundary point
$q$. After restriction, the following properties hold.
\begin{enumerate}
\item[\textup{(a)}] The boundary parametrization is locally absolutely
continuous. Its derivative agrees almost everywhere with the
nontangential limit of the holomorphic derivative, and
\begin{equation}\label{t12:eq:arclength}
 \dd S=\rho(t)\dd t,\qquad \rho(t)=|\Phi'(t)|.
\end{equation}
For every finite $Q>1$, there exists $R_Q>0$ such that
\begin{equation}\label{t12:eq:rho-lq}
 \rho\in L^Q(-R_Q,R_Q).
\end{equation}
\item[\textup{(b)}] For every $0<\alpha<1$, there are constants and
smaller neighborhoods such that
\begin{equation}\label{t12:eq:two-holder}
 |\Phi(z)-q|\le C_\alpha|z|^\alpha,
 \qquad
 |\Phi^{-1}(X)|\le C_\alpha|X-q|^\alpha.
\end{equation}
The estimates hold up to the respective boundaries, for the same
fixed conformal map.
\end{enumerate}
Consequently, for any local solution $u$ of
\eqref{paper:eq:local-problem}--\eqref{paper:eq:local-class},
\begin{equation}\label{t12:eq:order-transfer}
\ord^{\infty}_ {0}(u\circ\Phi)=\ord^{\infty}_qu=\ord_q u\in[0,+\infty].
\end{equation}
\end{lemma}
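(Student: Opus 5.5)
The plan is to prove (a) and (b) separately, and then obtain \eqref{t12:eq:order-transfer} from (b) together with Proposition~\ref{t12:lem:physical}.

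\emph{Part (a).} I will use the local graph form \eqref{t134:eq:graph} with $\psi\in C^1$. I first localize: choose a Jordan subdomain $D$ bounded by a piece of the graph near $q$ and a smooth cap, so that $D$ coincides with $\Om$ near $q$. Because $\psi'$ is continuous, $\partial D$ is $C^1$ near $q$; after rounding the corners where the cap meets the graph, it is $C^1$ everywhere. Composing $\Phi$ with a conformal map of the half-disk onto the disk (regular near the diameter) reduces everything to a Riemann map $f$ of the unit disk onto a $C^1$ Jordan domain.
\begin{itemize}
\item Since the tangent angle is continuous, $\arg f'$ extends continuously to the circle (Lindel\"of, as in \cite{Kalaj,Pom}).
\item Hence $\log f'=\log|f'|+i\arg f'$ has continuous imaginary part, so its real part $\log|f'|$ is the harmonic conjugate of a continuous function.
\item The harmonic conjugate of a continuous function has vanishing-mean-oscillation boundary values and satisfies $\exp(Q|\log|f'||)\in L^1$ for every $Q$ (Zygmund, or John--Nirenberg with small BMO norm localized). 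This gives $|f'|^{\pm1}\in L^Q$ for every $Q<\infty$, i.e.\ \eqref{t12:eq:rho-lq}.
\item The same bound gives $f'\in H^1$. By the F.~and M.~Riesz theorem the boundary parametrization is absolutely continuous, with derivative equal to the nontangential limit, which is \eqref{t12:eq:arclength}.
\end{itemize}
Transferring back through the regular Möbius-type map and restricting gives the local statements.

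\emph{Part (b).} For $z$ in the half-disk, write $\Phi(z)-q=\int_0^1\Phi'(tz)z\,dt$. The estimate $|\Phi'(z)|\le C_\eps|z|^{-\eps}$ near $0$ would give $|\Phi(z)-q|\le C|z|^{1-\eps}$. The cleaner route is via the exponential bound itself: $\log\Phi'$ is the Poisson/conjugate extension of a function whose imaginary part is continuous, and in particular small near $0$. Then $|\log|\Phi'(z)||\le \eps\log(1/|z|)+C_\eps$. To see this, split the conjugate-function integral into the part near $0$, where the oscillation of $\arg\Phi'$ is $<\eps$ and produces at most $\eps\log(1/|z|)$, and the far part, which is bounded. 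This pointwise two-sided bound $|z|^{\eps}\lesssim|\Phi'(z)|\lesssim|z|^{-\eps}$ then gives both
\[
 |\Phi(z)-q|\lesssim|z|^{1-\eps}\qquad\text{and}\qquad |\Phi(z)-q|\gtrsim|z|^{1+\eps}.
\]
For the lower bound, integrate $\Phi'$ along a path and use the continuity of $\arg\Phi'$ to prevent cancellation: near $0$ the argument is almost constant on small scales, so $|\int\Phi'|\ge c\int|\Phi'|$ along a radial segment. Inverting the lower bound gives $|\Phi^{-1}(X)|\le C|X-q|^{1/(1+\eps)}$, and taking $\alpha=1-\eps$ or $1/(1+\eps)$ proves \eqref{t12:eq:two-holder} up to the boundary, using continuity of the extension. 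The main obstacle is this no-cancellation lower bound; I expect it to require careful localization of the argument oscillation.

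\emph{Consequence \eqref{t12:eq:order-transfer}.}
\begin{itemize}
\item If $|u(X)|\le C|X-q|^b$, then $|U(z)|\le C|z|^{\alpha b}$ by the first estimate in (b). Hence $\ord^\infty_0U\ge\alpha\,\ord^\infty_qu$.
\item Symmetrically, the second estimate in (b) gives $\ord^\infty_qu\ge\alpha\,\ord^\infty_0U$.
\item Letting $\alpha\to1$ yields equality of the two pointwise orders, including the value $+\infty$.
\item Proposition~\ref{t12:lem:physical} identifies $\ord^\infty_qu$ with $\ord_qu$.
\end{itemize}
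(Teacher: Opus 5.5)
Your part (a) and the final order-transfer step are sound. The Zygmund/John--Nirenberg bound for the conjugate of the continuous function $\arg f'$ is what the paper's positive harmonic function $H=\operatorname{Re}\exp(Q\log(e^{-i\theta_0}\Phi'))$ encodes. Your route via $f'\in H^1$ and F.~and M.~Riesz is the rectifiable-boundary theorem the paper cites. One correction: the fixed map $\Phi$ is not the Riemann map of your capped domain composed with a M\"obius map. The transfer needs Schwarz reflection of the transition map $f^{-1}\circ\Phi$ across the diameter, as in the paper.

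\emph{The genuine gap is in part (b).} The two-sided bound $|z|^{\varepsilon}\lesssim|\Phi'(z)|\lesssim|z|^{-\varepsilon}$ on a half-disk is false for merely $C^1$ boundaries. In your splitting at $z=x+iy$, the near part of the conjugate integral has size
\[
 \varepsilon\int_{|t|<\delta}\frac{|x-t|}{(x-t)^2+y^2}\,dt\approx2\varepsilon\log(\delta/y).
\]
So what you actually obtain is $|\log|\Phi'(z)||\le\varepsilon\log(1/\operatorname{Im}z)+C_\varepsilon$. The factor $\operatorname{Im}z$ cannot be replaced by $|z|$, because $\log|\Phi'|$ on the diameter is only VMO. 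For instance, take $\arg\Phi'(t)=\pm\varepsilon_n\operatorname{sgn}(t-x_n)/\log(1/|t-x_n|)$ near points $x_n\to0$, with disjoint small supports and $\varepsilon_n\to0$. The boundary is still $C^1$, but the conjugate diverges like $\log\log$ at each $x_n$. Hence $\Phi'$ is unbounded (or has zero boundary limits, for the opposite sign) in every neighbourhood of $0$. With the correct estimate, your radial-segment integrals give only $|z|(\operatorname{Im}z)^{\mp\varepsilon}$, which degenerates as $z$ approaches the diameter. So neither H\"older estimate follows as written.

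\emph{How to repair it within your framework.}
\begin{itemize}
\item For the upper bound, integrate along $[0,i|z|]$ and then along the arc $|w|=|z|$. Since $\int_0^\pi(\sin\theta)^{-\varepsilon}\,d\theta<\infty$, this gives $|\Phi(z)-q|\le C|z|^{1-\varepsilon}$.
\item For the lower bound, no-cancellation is not the obstacle: continuity of $\arg\Phi'$ on the closed small half-disk makes it immediate on straight segments. The difficulty is the modulus.
\item Nontangential points, $\operatorname{Im}z\ge|z|/2$, are handled by the radial segment.
\item On the diameter, Cauchy--Schwarz and $\rho^{-1}\in L^Q$ (which your part (a) supplies) give $|\Phi(x)-q|\ge\cos\varepsilon\int_0^{|x|}\rho\ge c|x|^{1+1/Q}$.
\item For $z=x+iy$ with $|x|\ge|z|/2$, the vectors $\Phi(x)-q$ and $\Phi(z)-\Phi(x)$ are nearly orthogonal, so $|\Phi(z)-q|\ge c|\Phi(x)-q|$.
\end{itemize}

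The paper avoids derivative bounds altogether. It extends $\Phi$ across the diameter using the reflection $R_\psi(x,y)=(x,2\psi(x)-y)$. This extension is $K(L)$-quasiconformal with $K(L)=(\sqrt{1+L^2}+L)^2\to1$ as the patch shrinks. Mori's theorem, applied to the extension and its inverse, then gives both estimates for every $\alpha<1$ at once.
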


\begin{proof}
After closing a smaller boundary arc by a $C^1$ cap, the
rectifiable-boundary theorem \cite{Pom} gives
\eqref{t12:eq:arclength}, while Lindel\"of's theorem
\cite{Kalaj} gives continuity
of $\arg\Phi'$ up to the diameter. These local conclusions transfer
to any fixed $\Phi$ by Schwarz reflection of the transition maps
and their inverses, whose boundary derivatives are nonzero.

Fix $Q>1$. After rotation and restriction,
$|\arg(e^{-i\theta_0}\Phi')|<\pi/(4Q)$. For a compatible logarithm,
\[
 H=\Rea\exp\bigl(Q\log(e^{-i\theta_0}\Phi')\bigr)>0,
 \qquad |\Phi'|^Q\le\sqrt2\,H.
\]
A positive harmonic function has integrable boundary trace with
respect to harmonic measure, whose density is bounded above and
below on smaller diameter intervals. Fatou's lemma gives
\eqref{t12:eq:rho-lq}.

For \eqref{t12:eq:two-holder}, normalize the tangent and reflect
across the graph using $R_\psi(x,y)=(x,2\psi(x)-y)$.
The extension and its inverse are locally $K(L)$-quasiconformal,
where $L=\|\psi'\|_\infty$ and $K(L)=(\sqrt{1+L^2}+L)^2\to1$
as the patch shrinks. Mori's theorem \cite{Mori} therefore gives
both estimates for every $\alpha<1$.

Composing the two estimates gives
$\ord^{\infty}_{0}(u\circ\Phi)\ge\alpha\ord^{\infty}_q(u)$ and
$\ord^{\infty}_q(u)\ge\alpha\ord^{\infty}_0(u\circ\Phi)$.
Letting $\alpha\rightarrow1$ and using Proposition~\ref{prop:physical}
proves \eqref{t12:eq:order-transfer}, also for infinite order.
\end{proof}

We use the local conformal diffeomorphism constructed by Vita
\cite{Vita}, taking its inverse as the local
conformal map. The assertion for any fixed conformal map follows from the analytic
transition maps described above.

\begin{lemma}[Nondegenerate conformal maps under DMO regularity]
\label{t12:lem:dmo-chart}\label{t134:lem:chart}\label{t15:lem:dmo-chart}
If $\partial\Om$ is $C^{1,\mathrm{DMO}}$ near $q$, there is a local
conformal map $\Phi$ that is a $C^1$ diffeomorphism up
to the boundary  and satisfies 
\begin{equation}\label{t12:eq:chart-nondegenerate}
 a_\Phi:=\Phi'(0)\ne0,\qquad
 0<c\le|\Phi'(z)|\le C
 \quad\text{for all }z\in\overline{B_R^+}.
\end{equation}
Every other
fixed local conformal map has the same regularity after
restriction. 
\end{lemma}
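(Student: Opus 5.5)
The plan is to deduce the lemma from Vita's nondegenerate conformal chart \cite{Vita}, after localizing the geometry. We then transfer the conclusion to an arbitrary fixed local conformal map using the analytic transition maps, as in the proof of Lemma~\ref{t12:lem:c1-chart}. The proof has three steps: localization, construction of the chart, and transfer.

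\emph{Step 1: localization.} Work in the graph coordinates \eqref{t134:eq:graph}, with $\psi\in C^1(\overline J)$ and $\int_0^{r_0}\eta_{\psi'}(r)\,dr/r<\infty$. We close a small arc $\{(x,\psi(x)):|x|\le r_0/2\}$ by a $C^\infty$ cap lying in $\Om$. This gives a Jordan domain $D\subset\Om$ whose boundary agrees with $\partial\Om$ near $q$. The junctions are chosen so that the tangent stays continuous, for instance by interpolating $\psi$ with a smooth function through a cutoff, so that $\partial D$ is a closed $C^1$ curve. We then check that $\partial D$ is $C^{1,\mathrm{DMO}}$ in the sense used by Vita, namely that the tangent angle, as a function of arclength, has Dini mean oscillation. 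This rests on three observations.
\begin{itemize}
\item The angle $\arctan\psi'$ inherits the DMO modulus of $\psi'$, up to a constant, because $\arctan$ is Lipschitz.
\item The arclength reparametrization $x\mapsto\int_0^x\sqrt{1+\psi'^2}$ is bi-Lipschitz. It therefore changes the DMO modulus only by constants and a dilation of $r$, which preserves the Dini condition.
\item Multiplying by a smooth cutoff and adding smooth pieces keeps the DMO condition, since smooth functions have modulus $O(r)$.
\end{itemize}
I expect this bookkeeping, and matching it to the exact hypotheses in \cite{Vita}, to be the main technical point.

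\emph{Step 2: the chart.} Vita's theorem gives a conformal map $F:D\to\mathbb D$ whose derivative extends continuously to $\overline D$ and does not vanish there. Equivalently, $F^{-1}$ is a $C^1$ diffeomorphism up to the boundary with $(F^{-1})'\ne0$. We compose with a Möbius map $M$ that sends $\mathbb D$ to the upper half-plane and $F(q)$ to $0$; $M$ is smooth with nonvanishing derivative near $F(q)$. For small $R$, the map $\Phi:=F^{-1}\circ M^{-1}$ restricted to $B_R^+$ has the following properties:
\begin{itemize}
\item it is a local conformal map at $q$ in the sense defined before Lemma~\ref{lem:chart};
\item it sends the diameter into $\partial D\cap B_{r_0}(q)=\partial\Om\cap B_{r_0}(q)$;
\item it covers $\Om\cap B_\varepsilon(q)$, because $D$ agrees with $\Om$ near $q$.
\end{itemize}
Since $\Phi'$ is continuous and nonzero on the compact set $\overline{B_R^+}$, we get $a_\Phi=\Phi'(0)\neq0$ and the two-sided bound \eqref{t12:eq:chart-nondegenerate}.

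\emph{Step 3: transfer to any fixed map.} Let $\tilde\Phi$ be any other local conformal map at $q$. Set $\tau=\Phi^{-1}\circ\tilde\Phi$ on a small half-disk. Then $\tau$ is conformal, fixes $0$, and maps a real segment into the real axis. By the Schwarz reflection principle, $\tau$ extends to an injective holomorphic map on a full disk, so $\tau'(0)\ne0$. After restriction, $c\le|\tau'|\le C$ and $\tau$ is smooth up to the diameter. Hence $\tilde\Phi=\Phi\circ\tau$ is a $C^1$ diffeomorphism up to the boundary, with
\[
 |\tilde\Phi'|=|\Phi'\circ\tau|\,|\tau'|\in[c',C'],
 \qquad
 \tilde\Phi'(0)=a_\Phi\tau'(0)\ne0,
\]
which is the final assertion of the lemma.
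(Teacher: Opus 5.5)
Your proposal is correct and follows essentially the same route as the paper. The paper invokes Vita's nondegenerate conformal diffeomorphism \cite{Vita}, takes its inverse as the local chart, and transfers the $C^1$ regularity and the bounds $0<c\le|\Phi'|\le C$ to any other fixed chart by Schwarz reflection of the analytic transition maps, exactly as in your Step~3. Your Steps~1--2 (closing the arc with a smooth cap, checking that the tangent angle remains DMO, and composing with a M\"obius map) are just the bookkeeping needed to quote Vita's result in a global Jordan-domain form. The paper uses his local construction directly and so omits that step.
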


Under the hypotheses of Lemma~\ref{t12:lem:dmo-chart}, set
\[
 a_\Phi=\Phi'(0),\qquad a_0=|a_\Phi|,\qquad e=a_\Phi/a_0.
\]
Here $e$ and $ie$ are the oriented unit tangent and inward unit
normal at $q$.

\begin{lemma}[The transformed boundary equation]\label{lem:conformal-equation}
Let $q$ be a $C^1$ boundary point, let $u$ satisfy
\eqref{paper:eq:local-problem}--\eqref{paper:eq:local-class}, and let $\Phi$ be any fixed
local conformal map at $q$. After restriction, $U=u\circ\Phi$ is
harmonic and belongs to $H^1(B_R^+)\cap C(\overline{B_R^+})$.
Its outward normal trace on $I_R$ is
$\rho(t)(\lambda U-|U|^{p-1}U)$, with $\rho=|\Phi'|$ as in
Lemma \ref{t12:lem:c1-chart}.
\end{lemma}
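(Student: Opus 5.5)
The argument has three parts: harmonicity of $U$, the regularity claims, and the boundary identity. Harmonicity of $U=u\circ\Phi$ in $B_R^+$ is immediate, since $\Phi$ is holomorphic and injective. Continuity of $U$ on $\overline{B_R^+}$ follows from the homeomorphic extension of $\Phi$ (Lemma~\ref{lem:chart}) together with $u\in C(\overline\Om\cap B_R(q))$.

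For $U\in H^1(B_R^+)$ I will use conformal invariance of the Dirichlet integral:
\[
 \int_{B_R^+}|\nabla U|^2\dd z=\int_{\Phi(B_R^+)}|\nabla u|^2\dd X<\infty,
\]
after restricting $R$ so that $\Phi(B_R^+)\subset\Om\cap B_{R'}(q)$ for some $R'<R_0$. Boundedness of $U$ then gives $U\in L^2$, hence $U\in H^1(B_R^+)$.

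The substance of the lemma is the boundary identity. I will state it weakly: for every $\zeta\in C^1_c(B_R^+\cup I_R)$,
\[
 \int_{B_R^+}\nabla U\cdot\nabla\zeta\dd z=\int_{I_R}\rho(t)\bigl(\lambda U-|U|^{p-1}U\bigr)(t,0)\,\zeta(t,0)\dd t .
\]
To prove it, set $\eta=\zeta\circ\Phi^{-1}$. In the interior the gradient pairing is conformally invariant, $\nabla U\cdot\nabla\zeta\dd z=\nabla u\cdot\nabla\eta\dd X$. The weak form of the Robin condition for $u$, with test function $\eta$, gives $\int_\Gamma(\lambda u-|u|^{p-1}u)\eta\dd S$. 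Parametrizing $\Gamma$ by $t\mapsto\Phi(t)$ and using $\dd S=\rho(t)\dd t$ from \eqref{t12:eq:arclength} converts this to the stated right-hand side.

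The main obstacle is that $\eta$ is not smooth: $\Phi^{-1}$ is only known to be H\"older up to the boundary, by \eqref{t12:eq:two-holder}, so $\eta$ need not be in $H^1$. This gap closes by an approximation argument:
\begin{enumerate}
\item The composition $\eta=\zeta\circ\Phi^{-1}$ still has finite Dirichlet energy, by conformal invariance. It is continuous up to $\Gamma$ with compact support away from the artificial boundary, and bounded, so $\eta\in H^1\cap C$ and is an admissible test function for the normal-trace formulation of the Robin condition.
\item The trace pairing is well defined. The boundary integrand is bounded, because $U$ is continuous, and $\rho\in L^1$ by \eqref{t12:eq:rho-lq}.
\end{enumerate}

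If instead the Robin condition is interpreted through a normal trace defined by Green's formula on smooth subdomains, I will exhaust $B_R^+$ by the domains $\{y>\varepsilon\}$. The argument then runs as follows:
\begin{enumerate}
\item On $\{y=\varepsilon\}$ the identity $\partial_yU(t,\varepsilon)=|\Phi'|\,(\partial_{\nu_{\rm in}}u)\circ\Phi$ holds pointwise.
\item The image curves $\Phi(\{y=\varepsilon\})$ approach $\Gamma$.
\item Letting $\varepsilon\to0$ uses the nontangential convergence of $\Phi'$ from Lemma~\ref{t12:lem:c1-chart}(a) together with $L^Q$ domination of $\rho$.
\end{enumerate}
Either route identifies the outward normal trace of $U$ on $I_R$ as $\rho(\lambda U-|U|^{p-1}U)$.
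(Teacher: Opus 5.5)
Your main route is correct and is essentially the paper's proof. Conformal invariance gives $U\in H^1\cap C$; the pullback $\zeta\circ\Phi^{-1}$ of a test function supported away from the semicircle has finite energy and extends by zero to an admissible test function; and Green's formula with $\dd S=\rho(t)\dd t$ gives the weak identity with $g=\rho(\lambda U-|U|^{p-1}U)$. Despite your wording, no approximation argument is actually needed.

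The alternative exhaustion by $\{y>\varepsilon\}$ is unnecessary, and as sketched it would need more than you state. The normal on $\Phi(\{y=\varepsilon\})$ is not $\nu_{\rm in}$, and nothing controls $\nabla u$ up to $\Gamma$ for a solution that is merely $H^1\cap C$.
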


\begin{proof}
Conformal invariance of harmonicity and Dirichlet energy, together
with continuity, gives $U\in H^1\cap C$ on smaller half-disks.
For a smooth test $\zeta$ supported away from the semicircle,
$\zeta\circ\Phi^{-1}$ has finite energy and extends by zero across
the artificial arc to an admissible test in the original coordinates.
Green's formula for \eqref{paper:eq:local-problem}, followed by
conformal change of variables and \eqref{t12:eq:arclength}, gives
\begin{equation}\label{t12:eq:pullback}
 \int_{B_R^+}\nabla U\cdot\nabla\zeta\dd z
 =\int_{-R}^{R}g(t)\zeta(t,0)\dd t,
 \qquad g=\rho(\lambda U-|U|^{p-1}U).
\end{equation}
Hence the outward normal trace is $-U_y=g$; absolute continuity of the
boundary parametrization excludes a singular boundary measure.
\end{proof}

\subsection{Local boundary estimates}

\begin{lemma}
\label{t12:lem:potential}
Let $V\in H^1(B_R^+)\cap C(\overline{B_R^+})$ be real and harmonic,
with outward normal trace $h$ on $I_R$. Suppose that
\begin{equation*}
 |h(t)|\le\omega(t)|t|^s,\qquad
 \omega\ge0,\quad \omega\in L^Q(-R,R),\quad
 2\le Q\le\infty,\quad s\ge0.
\end{equation*}
Use $1/Q=0$ when $Q=\infty$, and assume
\begin{equation*}
 \beta:=s+1-\frac1Q>0,\qquad \beta\notin\N.
\end{equation*}
Set $M=\lfloor\beta\rfloor$. Then there are real coefficients
$c_0,\ldots,c_M$ such that
\begin{equation}\label{t12:eq:potential-expansion}
 V(z)=\sum_{j=0}^{M}c_j\operatorname{Re}(z^j)
       +O(|z|^\beta)
\end{equation}
uniformly as $z\to0$ in the closed upper half-disk.
\end{lemma}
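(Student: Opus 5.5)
We represent $V$ through the Neumann Green function of the half-plane and then Taylor-expand the kernel. Put $-V_y=h$ on $I_R$. Split the data as $h=h\ind_{(-R/2,R/2)}+h\ind_{\{R/2\le|t|<R\}}$ and set
\[
 W(z)=-\frac1\pi\int_{-R/2}^{R/2}h(t)\log|z-t|\dd t .
\]
Then $W$ is harmonic in $\HH$ with $-W_y=h$ on $(-R/2,R/2)$, since the outward normal is $-\partial_y$ and the log potential produces $\mp h$ jumps. The difference $V-W$ is harmonic in $B_{R/2}^+$ and has zero normal derivative on $I_{R/2}$. Even reflection makes it harmonic in $B_{R/2}$, and being even in $y$ it has a Taylor expansion $\sum_j c'_j\Rea(z^j)$. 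This part contributes only terms of the form $\Rea(z^j)$ plus $O(|z|^{M+1})=O(|z|^\beta)$. Membership of $W$ in $H^1$ near $0$ follows from $h\in L^2$, which holds because $Q\ge2$.

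It remains to expand $W$. For $|z|=r$ small, split the $t$-integral into a near region $|t|<2r$ and a far region $2r\le|t|<R/2$.

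\textbf{Far region.} Write $\log|z-t|=\log|t|+\Rea\log(1-z/t)=\log|t|-\sum_{j\ge1}\Rea(z^j)\,t^{-j}/j$. The terms with $j\le M$ give
\[
 \frac1{\pi j}\Rea(z^j)\int_{2r<|t|<R/2}h(t)t^{-j}\dd t .
\]
By Hölder, $\int|h||t|^{-j}\le\|\omega\|_Q\,\bigl(\int|t|^{(s-j)Q'}\bigr)^{1/Q'}$. The exponent satisfies $(s-j)Q'>-1$ exactly when $j<\beta$, and $\beta\notin\N$ makes this strict for $j\le M$. So each integral converges absolutely as $r\to0$ to a constant $d_j$. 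The missing piece $\int_{|t|<2r}$ is bounded by $Cr^{\beta-j}$, hence contributes $O(|z|^\beta)$. The tail $j>M$ is bounded by
\[
 \sum_{j>M}\frac{|z|^j}{j}\int_{|t|>2r}|h||t|^{-j}\le C|z|^{M+1}\,r^{\beta-M-1}=O(r^\beta),
\]
using $j>\beta$ and the geometric factor $2^{-j}$. The $j=0$ term $\int h\log|t|$ is a constant, except that its near part is $O(r^\beta\log r)$, which is harmless because $\beta>0$ and we can choose to keep the full constant.

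\textbf{Near region.} Bound $\int_{|t|<2r}|h(t)|\,\bigl|\log|z-t|\bigr|\dd t$, after subtracting the constant $\log r$ whose coefficient is $\int_{|t|<2r}h=O(r^\beta)$. Hölder gives $\|\omega\|_Q\,r^{s}\,\|\log(|z-t|/r)\|_{L^{Q'}(|t|<2r)}\lesssim r^{s+1/Q'}=r^\beta$, which is uniform for $z$ in the closed half-disk, including on the boundary, since the logarithm lies in $L^{Q'}$. Assembling the pieces with $c_j=c'_j+d_j/(\pi j)$ and a constant term gives \eqref{t12:eq:potential-expansion}.

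The main obstacle is controlling the critical exponent: the term $j$ crosses convergence exactly when $j=\beta$. This is where $\beta\notin\N$ enters, ensuring all tails are pure powers without logarithms. Uniformity up to $y=0$ in the near region also needs care, and it is handled by the $L^{Q'}$ integrability of the logarithm.
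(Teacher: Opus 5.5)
Your route is the paper's: a cut-off logarithmic potential of the Neumann data (in $H^1$ near the diameter because $Q\ge2$ gives $h\in L^2$), even reflection of $V-W$, absolute convergence of the moments $\int h\,t^{-j}\dd t$ exactly for $j<\beta$, a Taylor expansion of $\log|z-t|$ on $|t|\ge2r$, and H\"older with scaling on $|t|<2r$.

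The step that fails as written is the bookkeeping of the constant term. The far-region $j=0$ piece equals $b_0+\frac1\pi\int_{|t|<2r}h(t)\log|t|\dd t$, with $b_0=-\frac1\pi\int_{|t|<R/2}h(t)\log|t|\dd t$. Your near region, after you remove $\log r$, leaves the extra term $-\frac{\log r}{\pi}\int_{|t|<2r}h\dd t$. Each of these two terms is only $O(r^\beta\log(1/r))$. For $h(t)=|t|^s$, $\omega\equiv1$, $Q=\infty$, each is exactly of that size. The fact that $\beta>0$ does not turn $r^\beta\log(1/r)$ into $O(r^\beta)$. Treated separately, as you propose, they yield a remainder $O(|z|^\beta\log(1/|z|))$, which is strictly weaker than \eqref{t12:eq:potential-expansion}. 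The sharp form is needed, for instance, for the endpoint remainder $O(|z|^{k+\delta_*})$ in Theorem~\ref{t12:thm:main}\textup{(ii)}, where the lemma is applied with $Q=\infty$ and $\beta=1+pk$.

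The missing observation is that the two logarithms cancel. Their sum is $\frac1\pi\int_{|t|<2r}h(t)\log(|t|/r)\dd t$, and the substitution $t=r\tau$ together with H\"older gives
\[
 \Bigl|\int_{|t|<2r}h(t)\log\frac{|t|}{r}\dd t\Bigr|
 \le(2r)^s\|\omega\|_{L^Q}\,r^{1/Q'}\,
 \bigl\|\log|\tau|\bigr\|_{L^{Q'}(-2,2)}=O(r^\beta).
\]
Equivalently, as the paper does, estimate the near region directly with the kernel $\log|(z-t)/t|$, using $\sup_{|\zeta|=1,\,\Ima\zeta\ge0}\bigl\||\tau|^s\log|(\zeta-\tau)/\tau|\bigr\|_{L^{Q'}(-2,2)}<\infty$, so that no bare $\log r$ ever appears. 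With this pairing, the rest of your estimates go through and give the lemma: the near part of the moments for $1\le j\le M$, the geometric tail for $j>M$, and the truncation of the reflected Taylor series.
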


\begin{proof}
Choose $\chi\in C_c^\infty(-R,R)$ equal to one near zero, extend
$g=\chi h$ by zero, and set
\begin{equation*}
 S(z)=-\frac1\pi\int_\R g(t)\log|z-t|\dd t.
\end{equation*}
Since $g\in L^2$, the logarithmic potential is locally continuous
up to the diameter. The identities $-S_y=P_y*g$ and
$S_x=\pm\mathcal H(P_y*g)$, with the $L^2$ Poisson and
Hilbert-transform bounds, imply $S\in H^1_{\rm loc}$ up to the
diameter and outward normal trace $g$. Hence $V-S$ has an even
harmonic reflection near zero.

Let $Q'$ be conjugate to $Q$. Since $j\le M<\beta$, all moments
in
\[
 b_0=-\frac1\pi\int g(t)\log|t|\dd t,\qquad
 b_j=\frac1{\pi j}\int\frac{g(t)}{t^j}\dd t
 \quad(1\le j\le M)
\]
converge absolutely since $(s-j)Q'>-1$. Subtract
$\sum_{j=0}^M b_j\Rea(z^j)$ from $S$, put $r=|z|$, and split
the integral at $|t|=2r$. Scaling $t=r\tau$ and H\"older's inequality bound the
near part by $Cr^{s+1/Q'}=Cr^\beta$, using
\[
 \sup_{\substack{|\zeta|=1\\\Ima\zeta\ge0}}
 \left\||\tau|^s\log\left|\frac{\zeta-\tau}{\tau}\right|
 \right\|_{L^{Q'}(-2,2)}<\infty
\]
and the same moment integrability for the subtracted powers.
For the far part, Taylor's formula for the logarithm gives
\[
 C r^{M+1}\int_{2r<|t|<R}\omega(t)|t|^{s-M-1}\dd t
 \le C r^{M+1}
       \left(\int_{2r}^{R}t^{(s-M-1)Q'}\dd t\right)^{1/Q'}
 \le Cr^\beta,
\]
because $(s-M-1)Q'+1=Q'(\beta-M-1)<0$. These estimates include
$Q=\infty$ and are uniform up to the diameter. Finally, the even
harmonic function $V-S$ has a convergent expansion in $\Rea(z^j)$;
its truncation after $M$ has error $O(r^{M+1})$. Combining the
two expansions proves \eqref{t12:eq:potential-expansion}.
\end{proof}

\begin{lemma}[Boundary differentiability under DMO regularity]\label{t15:lem:boundary-c1}
Every locally $H^1\cap C$ solution of the nonlinear Robin problem on a
$C^{1,\mathrm{DMO}}$ boundary is locally $C^1$ up to that boundary.
\end{lemma}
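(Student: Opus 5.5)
The plan is to pass to the conformal coordinates of Lemma~\ref{t12:lem:dmo-chart}, prove that the pulled-back solution is $C^1$ up to the diameter, and then push this back through the $C^1$ diffeomorphism $\Phi$. Since $\Phi$ is a $C^1$ diffeomorphism up to the boundary with $0<c\le|\Phi'|\le C$, it suffices to show that $U=u\circ\Phi$ is $C^1$ on $\overline{B_{R'}^+}$ for some $R'<R$. Then $u=U\circ\Phi^{-1}$ is $C^1$ up to $\partial\Om$ near $q$. Because every boundary point of the $C^{1,\mathrm{DMO}}$ patch admits such a chart, this local statement gives the lemma.

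\textbf{Setting up the Neumann problem.} By Lemma~\ref{lem:conformal-equation}, $U$ is harmonic, lies in $H^1\cap C$, and has outward normal trace
\[
 g=\rho\bigl(\lambda U-|U|^{p-1}U\bigr),\qquad \rho=|\Phi'(t)|.
\]
Under DMO regularity, $\rho$ is continuous and bounded above and below on $[-R,R]$, because $\Phi'$ is continuous up to the boundary. Since $U$ is continuous, $\lambda U-|U|^{p-1}U$ is continuous on $I_R$. Indeed $s\mapsto|s|^{p-1}s$ is continuous (it is only Hölder of order $p$ at $0$), so $g$ is continuous on $I_R$. A merely continuous Neumann datum does not give $C^1$, so the next step bootstraps to a Hölder datum.

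\textbf{Bootstrap.} First I will show that $U$ is Hölder up to the diameter. I will write $U=S+(U-S)$ with $S$ the logarithmic potential of $\chi g$ from the proof of Lemma~\ref{t12:lem:potential}, where $V-S$ extends evenly and harmonically. Since $g\in L^\infty$, $\nabla S=$ (Poisson extension and Hilbert transform of $\chi g$) lies in BMO and in $L^Q$ for every $Q$, so $S\in C^{0,\gamma}$ for every $\gamma<1$; the Morrey estimate in two dimensions gives this directly. Hence $U\in C^{0,\gamma}(\overline{B^+_{R'}})$. Then $|U|^{p-1}U\in C^{0,p\gamma}$ on $I_{R'}$, and $\rho$ is continuous; I will upgrade $\rho$ to a Dini-type modulus as follows. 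The key input is the conormal regularity of \cite{DongLeeKim}. It is cleaner to apply it in the original graph coordinates, where $v=u\circ T$ solves the divergence-form equation with coefficients $A$ from \eqref{eq:matrix}. These coefficients have Dini mean oscillation because $\psi'$ does. The conormal datum there is $J(\lambda v-|v|^{p-1}v)$, which is Hölder (hence Dini) times $J$, and $J$ is DMO. The $C^{0,\gamma}$ bound for $v$ follows as above, or from De Giorgi theory, since the datum is bounded. Then \cite{DongLeeKim} yields $v\in C^1$ up to the flat boundary, hence $u\in C^1$.

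\textbf{Main obstacle.} The main obstacle is checking that the conormal datum has the product DMO/Dini structure required by \cite{DongLeeKim}. The difficulty is the factor $|v|^{p-1}v$, which is only $p$-Hölder in $v$; the Hölder continuity of $v$ obtained in the first step handles this. The product of a bounded DMO function with a Hölder function is again DMO, and I would verify this by splitting the mean oscillation into the two factors.
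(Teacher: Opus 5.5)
Once you drop the conformal framing, your argument is essentially the paper's proof. It passes to graph coordinates, obtains a De Giorgi--Nash H\"older bound from the bounded conormal datum, and deduces H\"older continuity of $\lambda v-|v|^{p-1}v$. It then checks DMO of $A$, $J$ and of the product datum; the paper encodes that datum as the $y$-independent field $\mathbf G=(0,-b(x))$, a step you leave implicit. It concludes with the flat conormal estimate of \cite{DongLeeKim} and composition with $T^{-1}$.

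The conformal reduction in your first paragraph is superfluous, and it would not close on its own. The weight $\rho=|\Phi'|$ is only known to be continuous, not Dini or DMO, so the graph-coordinate version is the one to keep.
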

\begin{proof}
With $V=u\circ T$ in graph coordinates and $A,J$ as in
\eqref{eq:matrix}, set
\[
 b(x)=J(x)\{\lambda V(x,0)-|V(x,0)|^{p-1}V(x,0)\},\qquad
 \mathbf G(x,y)=(0,-b(x)).
\]
The transformed problem is
\[
 \operatorname{div}(A\nabla V)=\operatorname{div}\mathbf G,
 \qquad A\nabla V\cdot e_2=\mathbf G\cdot e_2
 \quad\text{on the diameter}.
\]
Since $b$ is bounded, the reflected De Giorgi--Nash estimate gives
$V\in C^{0,\alpha}$ up to the diameter, hence
$\lambda V-|V|^{p-1}V\in C^{0,p\alpha}$ there.
Lipschitz composition and the product property
\cite{DongLeeKim} show that $A,J,b$
have DMO; constant extension in $y$ gives the same property for
$\mathbf G$. The flat conormal estimate
\cite{DongLeeKim} yields
$V\in C^1$, and composition with $T^{-1}$ proves the claim.
\end{proof}

\section{Nodal domains and boundary unique continuation}\label{sec:nodal}

\subsection{Strict comparison with the Steklov spectrum}

The spectral comparison holds for real $H^1$ solutions on bounded
connected Lipschitz domains $\Om\subset\R^n$, $n\ge2$, with the boundary
equation imposed on all of $\partial\Om$.

For each nodal domain $D$ of $u$, write $u_D:=u\ind_D$.
These nodal restrictions satisfy
\begin{equation}\label{eq:restriction}
 u_D\in H^1(\Om),\qquad
 \nabla u_D=\ind_D\nabla u\quad\text{a.e. in }\Om.
\end{equation}
Their Sobolev traces satisfy
\begin{equation}\label{eq:trace-support}
 u_D=u\quad\text{a.e. on }\{x\in\partial\Om:u_D(x)\ne0\}.
\end{equation}
For distinct nodal domains $D,E$, we also have
\begin{equation*}
 \nabla u_D\cdot\nabla u_E=0\quad\text{a.e. in }\Om,
 \qquad u_Du_E=0\quad\text{a.e. on }\partial\Om.
\end{equation*}

\begin{proposition}\label{prop:nodal}
Let $\Om\subset\R^n$ ($n\ge2$) be a bounded connected Lipschitz domain,
and let $0<p<1$ and $\lambda>0$. Suppose that
$u\in H^1(\Om)\setminus\{0\}$ is real and solves \eqref{eq:problem}.
Then $d(u)\le N_S^{<}(\lambda)$.
\end{proposition}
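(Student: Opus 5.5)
The plan is to use the nodal restrictions $u_D$ as test functions in the Steklov Rayleigh quotient
\[
 R(v)=\frac{\int_\Om|\nabla v|^2}{\int_{\partial\Om}v^2},
\]
and show that every nontrivial linear combination of them has quotient strictly less than $\lambda$. The min--max characterization of the Steklov spectrum then bounds $d(u)$. The key steps are as follows.

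\emph{Step 1: the identity for one nodal domain.} Fix a nodal domain $D$ and test the weak form of \eqref{eq:problem} with $u_D\in H^1(\Om)$, which is admissible by \eqref{eq:restriction}. Using $\nabla u\cdot\nabla u_D=|\nabla u_D|^2$ and the trace property \eqref{eq:trace-support}, this gives
\[
 \int_\Om|\nabla u_D|^2
 =\lambda\int_{\partial\Om}u_D^2-\int_{\partial\Om}|u_D|^{p+1}.
\]

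\emph{Step 2: the trace of $u_D$ is nonzero.} If the trace of $u_D$ vanished, Step 1 would give $\nabla u_D=0$. Then $u_D\in H^1_0$-type with zero gradient, so $u_D\equiv0$, contradicting $u\ne0$ on $D$. So $\int_{\partial\Om}u_D^2>0$. Because $\int_{\partial\Om}|u_D|^{p+1}>0$ as well, Step 1 yields the strict bound $R(u_D)<\lambda$.

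\emph{Step 3: arbitrary combinations.} Suppose there are at least $k$ nodal domains $D_1,\dots,D_k$, and take a nonzero $v=\sum c_iu_{D_i}$. The orthogonality relations stated before the proposition give
\[
 \int_\Om|\nabla v|^2=\sum c_i^2\int_\Om|\nabla u_{D_i}|^2,\qquad
 \int_{\partial\Om}v^2=\sum c_i^2\int_{\partial\Om}u_{D_i}^2.
\]
Summing the strict inequalities from Step 2 over the indices with $c_i\ne0$ yields $R(v)<\lambda$. The $u_{D_i}$ are linearly independent because their supports are disjoint and, by Step 2, their traces are nonzero. Hence they span a $k$-dimensional subspace $E$ on which the trace map is injective.

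\emph{Step 4: apply min--max.} Since $E$ is finite dimensional, $R$ attains its maximum on the unit sphere of $E$ with respect to the boundary norm, so $\max_{E}R<\lambda$. The variational characterization
\[
 \sigma_{k-1}=\min_{\dim E=k}\max_{v\in E\setminus\{0\}}R(v)
\]
(index starting at $0$) then gives $\sigma_{k-1}(\Om)<\lambda$. Therefore $N_S^<(\lambda)\ge k$, and $d(u)\le N_S^<(\lambda)$. This count is finite because the Steklov spectrum on a bounded Lipschitz domain is discrete, by compactness of the trace map $H^1\to L^2(\partial\Om)$.

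\emph{Main obstacle.} I expect the delicate point to be justifying \eqref{eq:restriction}--\eqref{eq:trace-support} and the admissibility of $u_D$ when $u$ is merely $H^1$ and not continuous. The proposition is stated for $u\in H^1$ with $n\ge2$, so openness of $D$ and the trace behaviour must come from interior smoothness (harmonicity) together with a truncation argument. The paper records these facts just before the proposition, so I would take them as given. The only remaining check is that the min--max is applied over subspaces of $H^1$ on which the trace is nondegenerate, which Step 2 ensures.
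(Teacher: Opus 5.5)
Your proposal is correct and follows the paper's proof essentially step by step: you test with $u_D$, obtain a nonzero trace from connectedness, use the orthogonality of the nodal restrictions, and then apply Steklov min--max. The differences are cosmetic. You get the strict bound below $\lambda$ by compactness of the unit sphere in $E$ rather than the paper's explicit gap $\delta_k=\min_i Q_i/B_i$. You also apply min--max directly to the trace-injective subspace of $H^1$, whereas the paper first replaces it by the harmonic extensions of the traces.
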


\begin{proof}
Apply Green's formula for \eqref{eq:problem} with the admissible
test function $u_D\in H^1(\Om)$. Using
\eqref{eq:restriction}--\eqref{eq:trace-support} gives
\begin{equation*}
 E_D=\lambda B_D-Q_D,
\end{equation*}
where $E_D=\int_\Om|\nabla u_D|^2$,
$B_D=\int_{\partial\Om}|u_D|^2$, and
$Q_D=\int_{\partial\Om}|u_D|^{p+1}$.
If $B_D=0$, then $E_D=0$ and connectedness and the zero trace force
$u_D=0$, a contradiction. Thus
\begin{equation*}
 B_D>0,\qquad Q_D>0,\qquad E_D/B_D=\lambda-Q_D/B_D<\lambda.
\end{equation*}
For any finite family $D_1,\ldots,D_k$, let
$V_k=\Span\{u_{D_1},\ldots,u_{D_k}\}$.
Disjointness of the gradients and boundary traces gives, for
$v=\sum_i a_i u_{D_i}$,
\begin{equation*}
 \int_\Om|\nabla v|^2=\sum_i a_i^2E_i,
 \qquad \int_{\partial\Om}|v|^2=\sum_i a_i^2B_i.
\end{equation*}
Hence $\dim(V_k|_{\partial\Om})=k$ and, with $\delta_k=\min_i Q_i/B_i>0$,
\begin{equation*}
 \sup_{v\in V_k\setminus\{0\}}
 \frac{\int_\Om|\nabla v|^2}{\int_{\partial\Om}|v|^2}
 \le\lambda-\delta_k<\lambda.
\end{equation*}
Harmonic extension preserves this $k$-dimensional trace space and
does not increase Dirichlet energy. The Steklov min--max principle
therefore yields
\begin{equation*}
 \sigma_{k-1}(\Om)\le\lambda-\delta_k<\lambda.
\end{equation*}
Thus $k\le N_S^{<}(\lambda)$ for every finite family, which proves
the assertion without assuming that $d(u)$ is finite.
\end{proof}

\subsection{Boundary sign rigidity and planar topology}
The following lemma follows from Baouendi and Rothschild~
\cite[Corollary~2]{BR}.

\begin{lemma}\label{lem:sign}
Let $h$ be real, harmonic in an upper half-disk, continuous up to its
diameter, and pointwise flat at the origin in the sense of
\eqref{eq:U-flat}. If, on a neighborhood of zero in the diameter,
either $h(x,0)$ or $xh(x,0)$ does not change sign, then $h\equiv0$.
\end{lemma}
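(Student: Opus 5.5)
The lemma is a reformulation of the boundary sign theorem of Baouendi and Rothschild~\cite[Corollary~2]{BR}. The plan is to check that its hypotheses hold in our setting and to reduce the weighted condition on $xh(x,0)$ to the unweighted one. As I recall it, the cited result says the following. A real harmonic function on an upper half-disk, continuous up to the diameter, and vanishing to infinite order at the origin (in the sense $|h(z)|\le C_M|z|^M$ for all $M$) is identically zero if its boundary trace does not change sign on a neighborhood of $0$.

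\emph{First case: $h(x,0)$ does not change sign.} I will check that our hypotheses match those of \cite{BR}: harmonicity in $B_R^+$, continuity up to the diameter, and the flatness condition \eqref{eq:U-flat}. If \cite{BR} requires more boundary regularity, for instance $C^1$ up to the diameter, I will not regularize by translating the half-disk upward, since that destroys the condition at the boundary. Instead I will use the fact that the hypothesis there enters only through the trace $h(x,0)$ and the pointwise flatness. I will also note that the interior gradient bounds of Corollary~\ref{cor:flat} give flatness of $\nabla h$ along every nontangential approach. This is the form in which flatness is used when comparing with the Poisson representation of $h$ on the half-disk.

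\emph{Second case: $xh(x,0)$ does not change sign.} Here I reduce to the first case with a holomorphic multiplier. On the simply connected half-disk, let $f=h+i\tilde h$ be holomorphic with $\Rea f=h$, and normalize the conjugate by $\tilde h(0,R/2)=0$. Put $k(z)=\Rea\bigl(zf(z)\bigr)=xh-y\tilde h$. Then $k$ is harmonic and $k(x,0)=xh(x,0)$, so its trace has one sign near $0$. To show that $k$ is flat, I will use the two estimates in Corollary~\ref{cor:flat}. The interior gradient estimate bounds $|\nabla\tilde h|=|\nabla h|$ by $C_My^{M}$ along the vertical axis, and more generally by $C_M|z|^M$ in cones. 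Integrating along rays then shows that $\tilde h$ differs from some constant $c_0$ by a flat function in each cone. I expect the main obstacle to be the region near the diameter, where cone estimates degenerate. One fix is to note that $y\tilde h$ carries the factor $y$ and that $\tilde h$ is bounded there. The cleaner fix is to apply the argument to $\Rea\bigl(z(f-ic_0)\bigr)$, which has the same trace $xh(x,0)$, and to obtain flatness uniformly up to the boundary by the Phragm\'en--Lindel\"of argument underlying \cite{BR}. Once $k$ is flat with a one-signed trace, the first case gives $k\equiv0$. Hence $zf(z)$ is a purely imaginary constant, which vanishes at $z=0$, so $f\equiv0$ and $h\equiv0$.

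The main step to verify carefully is therefore the flatness of the harmonic conjugate up to the diameter, given only continuity of $h$. If that fails, I will instead quote the weighted version of \cite[Corollary~2]{BR} directly, which I believe is stated there with the weight $x$ included.
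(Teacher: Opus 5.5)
The paper proves this lemma only by citing \cite[Corollary~2]{BR}, which it reads as covering both the condition on $h(x,0)$ and the weighted condition on $xh(x,0)$. Your first case is the same citation. Your second case takes a genuinely different, planar route: it needs only the unweighted theorem, applied to $k=\Rea\bigl(z(f-ic_0)\bigr)=xh-y(\tilde h-c_0)$ with $c_0=\lim_{t\downarrow0}\tilde h(0,t)$.

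This works, and the step you flag closes by an elementary estimate; no Phragm\'en--Lindel\"of argument is needed. Do not aim for boundedness or flatness of $\tilde h-c_0$ up to the diameter, because the conjugate of a merely continuous trace need not be bounded there. Also, without subtracting $c_0$ the term $c_0y$ destroys flatness, so your first ``fix'' alone is not enough.

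\begin{itemize}
\item The interior gradient estimate gives $|\nabla\tilde h(z)|=|\nabla h(z)|\le C_M|z|^M/y$.
\item Integrating along the imaginary axis shows that $c_0$ exists and that $|\tilde h(ir)-c_0|\le C_Mr^M$.
\item Integrating along the arc $|z|=r$ costs $\int d\phi/\sin\phi$, so $|\tilde h(z)-c_0|\le C_Mr^M\bigl(1+\log(r/y)\bigr)$.
\item The factor $y$ absorbs the logarithm: $y|\tilde h(z)-c_0|\le C_Mr^{M+1}$. This gives flatness of $k$.
\item The same bound gives $y(\tilde h-c_0)\to0$ as $y\downarrow0$, locally uniformly. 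Hence $k$ is continuous up to the diameter with trace $xh(x,0)$.
\end{itemize}

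The unweighted theorem then gives $k\equiv0$. This forces $z(f-ic_0)\equiv ic$ with $c$ real. Letting $z=it\to0$ gives $c=0$, hence $h=\Rea f\equiv0$.

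The paper's citation is shorter. Your argument is self-contained modulo the basic sign theorem: it shows that in the plane the weighted case follows formally from the unweighted one, at the cost of the conjugate-function estimate above.
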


The following planar separation lemma rules out alternating boundary
contacts by two disjoint connected open sets.

\begin{lemma}\label{lem:crossing}
Let $\Gamma$ be a boundary component of a bounded connected planar
Lipschitz domain $\Om$. Let $P,N$ be disjoint connected open subsets
of $\Om$. It is impossible to have four distinct points
$a,b,c,d\in\Gamma$ in cyclic order such that $P$ has interior
collars at $a,c$ and $N$ has interior collars at $b,d$.
\end{lemma}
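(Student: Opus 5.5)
We argue by contradiction, using a Jordan-curve separation argument for planar domains. Suppose $a,b,c,d\in\Gamma$ lie in cyclic order, that $P$ has interior collars at $a$ and $c$, and that $N$ has interior collars at $b$ and $d$. A collar at $a$ means a neighbourhood $W_a$ of $a$, a Lipschitz graph chart in which $\Om\cap W_a$ is the region above the graph, such that $\Om\cap W_a\subset P$. The collars at $b,c,d$ are defined in the same way, with $N$ in place of $P$ at $b,d$.

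\emph{Step 1: two connecting arcs.} Since $P$ is open and connected, it is path connected. We join a point of the collar at $a$ to a point of the collar at $c$ by a polygonal path in $P$. Inside each collar we then join that point to $a$ (resp.\ $c$) by a vertical segment in the graph chart; this segment stays in $P$ except at its endpoint. After removing loops we obtain a simple arc $\gamma_P$ from $a$ to $c$ with $\gamma_P\setminus\{a,c\}\subset P\subset\Om$. In the same way we obtain a simple arc $\gamma_N$ from $b$ to $d$ with interior in $N$. Because $P\cap N=\emptyset$, the arcs $\gamma_P$ and $\gamma_N$ are disjoint.

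\emph{Step 2: close up the first arc.} The boundary component $\Gamma$ of a bounded Lipschitz planar domain is a Jordan curve. The points $a,c$ split it into two open subarcs, $\Gamma_1\ni b$ and $\Gamma_2\ni d$; this is exactly where the cyclic order is used. Set $J=\gamma_P\cup\Gamma_1$. This is a Jordan curve, since $\gamma_P$ meets $\Gamma$ only at $a,c$. We claim that $b$ and $d$ lie on opposite sides of $J$ in a local sense: points of $\Om$ near $b$ and points of $\Om$ near $d$ lie in different components of $\R^2\setminus J$.

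To prove the claim, consider the other Jordan curve $J'=\gamma_P\cup\Gamma_2$. Near $b$, the domain $\Om$ lies on one side of $\Gamma$, which is locally a graph. The open set $\Om$ lies entirely on one side of $\Gamma$, namely either in the bounded complementary component of $\Gamma$ or in the unbounded one. The chord $\gamma_P$ lies in that same region, so it splits the region into two pieces. The standard $\theta$-curve lemma (Jordan curve theorem for theta graphs, see e.g.\ Newman or Pommerenke) states that the three Jordan curves $\Gamma$, $J$, $J'$ formed by three arcs sharing endpoints $a,c$ divide the plane into exactly three components. Each arc is the common boundary of exactly two of these components. Applying this to the arcs $\gamma_P,\Gamma_1,\Gamma_2$: the side of $\Gamma$ containing $\Om$ is cut by $\gamma_P$ into two components. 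One has boundary $\gamma_P\cup\Gamma_1$ and touches the relative interior of $\Gamma_1$; the other has boundary $\gamma_P\cup\Gamma_2$ and touches the relative interior of $\Gamma_2$. Points of $\Om$ near $b$ therefore lie in the first piece, and points of $\Om$ near $d$ lie in the second.

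\emph{Step 3: contradiction.} The arc $\gamma_N$ joins $b$ to $d$, and its interior lies in $\Om\setminus\gamma_P$, on the $\Om$-side of $\Gamma$. Its initial segment near $b$ lies in the first piece and its final segment near $d$ lies in the second. By connectedness, $\gamma_N$ must therefore meet the common boundary of the two pieces within that side, which is $\gamma_P$. This contradicts $\gamma_P\cap\gamma_N=\emptyset$.

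The main obstacle is Step 2: making the $\theta$-curve argument rigorous when $\Om$ may be multiply connected, so that $\Gamma$ need not be the outer boundary. The choice to work on whichever side of $\Gamma$ contains $\Om$ handles this uniformly, since other boundary components play no role. A minor technical point is the simplicity of the arcs; polygonal approximation together with loop removal handles it.
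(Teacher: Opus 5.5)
Your proof is correct and follows the paper's argument: $\Gamma$ is a Jordan curve, a simple crosscut from $a$ to $c$ through $P$ splits the side of $\Gamma$ containing $\Om$ into two pieces, the collars at $b$ and $d$ lie in different pieces, and a path in $N$ joining them would have to meet the crosscut. Your $\theta$-curve theorem is the same as the paper's Jordan crosscut theorem on the Riemann sphere; you only add detail on building the simple arcs and on why points of $\Om$ near $b$ and near $d$ fall into different components.
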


\begin{proof}
Each component of a planar Lipschitz boundary is a Jordan curve.
On the Riemann sphere, $\Om$ lies in one of the two topological
disks bounded by $\Gamma$, say $\mathcal D$.
Path connectedness of $P$ and the collars at $a,c$ give a simple
crosscut of $\mathcal D$ from $a$ to $c$ with interior in $P$.
By the Jordan crosscut theorem, the collars at $b,d$ lie on
opposite sides. A path between them in $N$ must meet the crosscut,
contrary to $P\cap N=\varnothing$.
\end{proof}

\begin{proposition}\label{prop:infinite-nodal}
Let $\Om\subset\R^2$ be a bounded connected Lipschitz domain,
$q\in\partial\Om$, and let $u\in H^1(\Om)\cap C(\overline\Om)$
be real, harmonic and not identically zero. Suppose that, on a
boundary neighborhood of $q$,
\[
 \partial_\nu u=\lambda u-|u|^{p-1}u,
 \qquad 0<p<1,\quad\lambda\ge0,
\]
in the sense of normal traces.
No condition is imposed on the remainder of $\partial\Om$.
If $\ord_q u=+\infty$, then $d(u)=\infty$.
\end{proposition}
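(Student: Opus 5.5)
We argue by contradiction: assume $\ord_q u=+\infty$ and $d(u)<\infty$, and show that $u$ vanishes near $q$. This contradicts $u\not\equiv0$, since a harmonic function vanishing on an open subset of the connected domain $\Om$ is identically zero.

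\emph{Conformal coordinates.} Fix a local conformal map $\Phi$ at $q$ as in Lemma~\ref{lem:chart} and put $U=u\circ\Phi$. By Corollary~\ref{cor:flat}, $U$ is harmonic on $B_{r_0}^+$, continuous up to the diameter, and pointwise flat at $0$. By Lemma~\ref{lem:sign}, it then suffices to find $\delta>0$ such that $U(x,0)$ or $xU(x,0)$ does not change sign on $(-\delta,\delta)$. Since $\Phi$ is a homeomorphism up to the diameter, this is the same as a sign condition for the trace of $u$ on the two boundary arcs of $\partial\Om$ on either side of $q$.

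\emph{Topological step.} Suppose no such $\delta$ exists. Then on each side of $q$ the trace must change sign arbitrarily close to $q$, or at least have a sign pattern that, combined across the two sides, yields points arbitrarily close to $q$ at which $u$ is positive and others at which $u$ is negative. More precisely, if for every $\delta$ both $U(\cdot,0)$ and $xU(\cdot,0)$ change sign on $(-\delta,\delta)$, then for every $\delta$ we can find points $x_1<x_2<x_3<x_4$ in $(-\delta,\delta)$ with $U(x_i,0)$ alternating in sign.

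Suppose instead that only a pattern of the form "$U>0$ somewhere, $U<0$ somewhere" survives. Then iterating the choice of $\delta$ still produces infinitely many alternations accumulating at $0$, because otherwise the trace would have a constant sign on each of $(-\delta,0)$ and $(0,\delta)$. That gives one of the two excluded sign conditions, and the case of one side vanishing identically is also covered.

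At a point $x_i$ with $U(x_i,0)\neq0$, continuity gives a small half-disk on which $U$ has that sign. So the nodal domain of $u$ containing $\Phi$ of that half-disk has an interior collar at $\Phi(x_i,0)$.

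\emph{Counting nodal domains.} Since $d(u)<\infty$, there are finitely many positive and negative nodal domains, and the infinitely many alternating points are distributed among them. By pigeonhole, we can extract four points in cyclic order along the boundary component $\Gamma\ni q$ such that one positive nodal domain $P$ has collars at the first and third points and one negative nodal domain $N$ has collars at the second and fourth. Concretely, among infinitely many alternating points, some positive domain $P$ is hit infinitely often and some negative domain $N$ is hit infinitely often. Taking points in increasing order $x_1<x_2<x_3<x_4$ with $P,N,P,N$ is possible because between any two $P$-points farther out there are alternations, and hence also some $N$-point. I would choose a subsequence carefully so that both recur interleaved; otherwise one further pigeonhole on consecutive pairs suffices.

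The points lie on an arc of $\Gamma$, so they are in cyclic order on $\Gamma$. Lemma~\ref{lem:crossing} forbids this configuration. Hence one of the sign conditions holds, Lemma~\ref{lem:sign} gives $U\equiv0$ near $0$, and therefore $u\equiv0$, the desired contradiction.

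The main obstacle is the combinatorial extraction of the interleaved $P,N,P,N$ pattern from the failure of both sign conditions. One has to check that failure of both conditions at every scale really forces infinitely many sign alternations, and that finitely many nodal domains then force an interleaving by a single pair $(P,N)$. For the second point I would argue by pigeonhole on the finite set of pairs $(P,N)$ assigned to consecutive alternating points $x_{2j-1},x_{2j}$: some pair occurs for two disjoint consecutive couples, which gives exactly $P,N,P,N$ in order.
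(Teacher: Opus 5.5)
Your proposal is correct and is essentially the paper's proof: flatness of $U=u\circ\Phi$ from Corollary~\ref{cor:flat}, then Lemma~\ref{lem:sign} to force a monotone sequence with strictly alternating trace signs on one side of $0$, then collars from continuity, a pigeonhole on the ordered labels $(P,N)$ of consecutive positive--negative pairs, and finally Lemma~\ref{lem:crossing}. Your first interleaving heuristic (``between two $P$-points there is some $N$-point'') does not work, because the intervening negative point may lie in a different negative nodal domain; the pigeonhole on consecutive pairs that you fall back on is what is needed, and it is exactly the paper's argument.
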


\begin{proof}
For the conformal map in Lemma~\ref{lem:chart},
$U=u\circ\Phi$ is pointwise flat by Corollary~\ref{cor:flat},
and is nonzero by interior unique continuation.
If its trace does not change sign on each side of zero, then
either $U(x,0)$ or $xU(x,0)$ does not change sign, contradicting
Lemma~\ref{lem:sign}. On at least one side, therefore, a monotone
sequence approaching zero has strictly alternating trace signs.
Continuity gives disjoint boundary intervals and connected
interior collars with these signs. Their images under $\Phi$
lie, in the same or reversed order, on one boundary component.

If there were finitely many nodal domains, two of the consecutive
positive--negative pairs would have the same ordered nodal labels
$(P,N)$. Their four collars would occur in cyclic order
$P,N,P,N$, contradicting Lemma~\ref{lem:crossing}.
\end{proof}

\begin{proof}[\normalfont\bfseries Proof of Theorem \ref{thm:main}]
Proposition \ref{prop:nodal} proves \eqref{eq:nodal-main}. If
\eqref{eq:finite-main} failed at a boundary point, Proposition
\ref{prop:infinite-nodal} would imply infinitely many nodal domains,
contradicting \eqref{eq:nodal-main}.
\end{proof}

\begin{proof}[\normalfont\bfseries Proof of Proposition \ref{t15:local-sucp}]
Suppose that the germ is nonzero. In local graph coordinates at
$q=0$, continuity allows us to arrange
\begin{equation*}
 \lambda|u|^{1-p}\le\tfrac12.
\end{equation*}
The trace cannot vanish on an open boundary arc: the Robin law
would also give zero normal trace there. Green's formula then
shows that zero extension across the Lipschitz graph is
distributionally harmonic, hence harmonic by Weyl's lemma.
Interior unique continuation would make the germ zero.

Choose $a<0<b$ with nonzero trace at $(a,\psi(a))$ and
$(b,\psi(b))$, and $h>\max_{[a,b]}\psi$ with
$u(a,h)u(b,h)\ne0$, all within this neighborhood. Such an $h$
exists because the restrictions to the two vertical lines are
nonzero real analytic functions. Let
\[
 D=\{(x,y):a<x<b,\ \psi(x)<y<h\},
\]
with bottom boundary $\Gamma$ and remaining three sides $\Sigma$.
On each straight side the trace is analytic away from the bottom
endpoints; it is nonzero near those endpoints and at the upper
corners. Thus $\Sigma\cap\{u\ne0\}$ has finitely many connected
components.

If a nodal restriction $u_E$ in $D$ had zero trace on $\Sigma$,
its zero extension across $\Sigma$ would be an admissible local
test function by \eqref{eq:restriction}. It would give
\[
 \int_D|\nabla u_E|^2+\int_\Gamma|u_E|^{p+1}
 =\lambda\int_\Gamma|u_E|^2
 \le\tfrac12\int_\Gamma|u_E|^{p+1},
\]
forcing $u_E=0$. Every nodal domain therefore has nonzero trace
on $\Sigma$. Along each connected component of
$\Sigma\cap\{u\ne0\}$, continuity and interior collars assign a
unique nodal domain, locally constantly and hence constantly.
This gives only finitely many nodal domains in $D$.
Since $D$ agrees with $\Om$ near zero, its solution has infinite
vanishing order there. Proposition~\ref{prop:infinite-nodal} gives
the contradiction.
\end{proof}

\section{Subcritical boundary zeros}\label{sec:subcritical}

In this section, we prove Theorem~\ref{t12:thm:main}.
We first treat the $C^1$ case, then use Dini mean oscillation to obtain
the endpoint remainder estimate in conformal coordinates and a nondegenerate leading profile
in the original coordinates.

\begin{proof}[\normalfont\bfseries Proof of Theorem~\ref{t12:thm:main}]
\textup{(i)} Write $U=u\circ\Phi$ and $g=\rho(\lambda U-|U|^{p-1}U)$ for its
outward Neumann datum. Proposition~\ref{prop:physical} and
Lemma~\ref{t12:lem:c1-chart} give
\begin{equation*}
 \ord^{\infty}_0 U=k,\qquad
 |U(z)|\le C_\varepsilon|z|^{k-\varepsilon}
 \quad(0<\varepsilon<k).
\end{equation*}
Choose $\varepsilon>0$ small and a finite $Q\ge2$ large enough that
\begin{equation*}
 \beta=1+p(k-\varepsilon)-1/Q>k,\qquad\beta\notin\N.
\end{equation*}
Since $|g(t)|\le C_\varepsilon\rho(t)|t|^{p(k-\varepsilon)}$
and $\rho\in L^Q$ locally, Lemma~\ref{t12:lem:potential} gives
\begin{equation*}
 U(z)=\sum_{j=0}^{\lfloor\beta\rfloor}c_j\Rea z^j
       +O(|z|^\beta).
\end{equation*}
Some coefficient is nonzero, since otherwise the order would
exceed $k$. The least nonzero degree equals $k$: evaluating on a
ray where that homogeneous term is nonzero excludes larger
orders. Thus $k\in\N$, and $k\ge1$ because $U(0)=0$.
As $k<\beta<1+pk<k+1$, only $A=c_k\ne0$ remains, and
\begin{equation*}
 |U(z)|\le C|z|^k.
\end{equation*}
Consequently,
\begin{equation}\label{t12:eq:improved-data}
 |g(t)|\le C\rho(t)|t|^{pk}.
\end{equation}
For any $0<\delta<\delta_*$, choose $1/Q<\delta_*-\delta$.
A second application of Lemma~\ref{t12:lem:potential} has exponent
$k+\delta<1+pk-1/Q<k+1$. Comparing coefficients gives
\begin{equation}\label{t12:eq:conformal-expansion}
 U(z)=A\operatorname{Re}(z^k)+O(|z|^{k+\delta}),
 \qquad 0<\delta<\delta_*,
\end{equation}
uniformly as $z\to0$ in the closed half-disk. In particular,
\begin{equation*}
 A=\lim_{t\rightarrow0}t^{-k}U(t,0),
\end{equation*}
so $A$ is unique.

The expansion gives locally uniform convergence of
$W_r(z)=r^{-k}U(rz)-A\Rea z^k$ to zero on the closed half-plane.
Its Neumann datum is $h_r(t)=r^{1-k}g(rt)$, and for $1/Q<\delta_*$,
\begin{equation*}
 \|h_r\|_{L^Q(-2L,2L)}
 \le C_Lr^{\delta_*-1/Q}
       \|\rho\|_{L^Q(-2Lr,2Lr)}\longrightarrow0.
\end{equation*}
Testing with $\eta^2W_r$, where $\eta=1$ on $B_L$ and is supported
in $B_{2L}$, gives
\begin{equation*}
 \int\eta^2|\nabla W_r|^2
 \le C\int|W_r|^2|\nabla\eta|^2
       +C\int_{-2L}^{2L}|h_rW_r|\longrightarrow0.
\end{equation*}
This proves strong $H^1$ convergence. Finally, squaring the
diameter expansion and integrating yields
\[
 \int_{-r}^rU(t,0)^2\dd t
 =\frac{2A^2}{2k+1}r^{2k+1}+O(r^{2k+\delta+1}),
\]
which is \eqref{t12:eq:trace}.

\smallskip
\noindent\textup{(ii)} Under DMO, Lemma~\ref{t12:lem:dmo-chart} gives
the regularity of the conformal map and \eqref{t12:eq:chart-nondegenerate}.
In particular, $\rho$ is bounded, so \eqref{t12:eq:improved-data}
gives $|g(t)|\le C|t|^{pk}$.
Since
\begin{equation*}
 k-1<pk<k,\qquad k<1+pk<k+1,
\end{equation*}
Lemma~\ref{t12:lem:potential} with $Q=\infty$ gives the endpoint
expansion with the same $A$.

Writing $a_\Phi=\Phi'(0)\ne0$, the $C^1$ conformal map and its inverse satisfy
\begin{equation*}
 \Phi(z)=q+a_\Phi z+o(|z|),\qquad
 \Phi^{-1}(q+w)=w/a_\Phi+o(|w|),
\end{equation*}
uniformly up to the boundary. Substituting in the conformal
expansion gives \eqref{t12:eq:physical-expansion}, since
$|z^k-\zeta^k|\le k\max\{|z|,|\zeta|\}^{k-1}|z-\zeta|$.
The tangent direction is $e=a_\Phi/|a_\Phi|$, and $ie$ points
inward. Thus $P_k$ is a nonzero harmonic polynomial with zero
normal derivative on $\partial H_{a_\Phi}$. Restriction to
$w=e(s+i\psi(s))$, where $\psi(s)=o(|s|)$, gives the stated
boundary trace. Homogeneity gives the uniform rescaled limit.

The $C^1$ graph implies
$\ind_{\Om_r}\to\ind_{H_{a_\Phi}}$ almost everywhere.
Uniform convergence and dominated convergence then give
\[
 r^{-2k-2}\int_{\Om\cap B_r(q)}u^2
 \longrightarrow\int_{H_{a_\Phi}\cap B_1}P_k^2
 =\frac{A^2}{|a_\Phi|^{2k}}
   \int_0^1s^{2k+1}\dd s\int_0^\pi\cos^2(k\theta)\dd\theta
 =\frac{\pi A^2}{4(k+1)|a_\Phi|^{2k}}.
\]
\end{proof}

\section{Critical boundary zeros}\label{sec:critical}

In this section, we prove Theorems~\ref{t134:ni:thm:main}
and~\ref{thm:integer-critical-order} on critical boundary zeros.
For noninteger $\kappa$, we work directly on $\Om$,
using compactness under graph rescaling to identify the unique
asymptotic profile under $C^1$ regularity. For integer $\kappa$,
Dini mean oscillation provides a nondegenerate conformal map,
which reduces the problem to a flat equation with a continuous
positive boundary weight; we then exclude vanishing of order $\kappa$
for this weighted problem.

We first record the flat Weiss identity for the limiting equations
used in both the noninteger and integer cases.

\begin{lemma}\label{t134:lem:flat-weiss}
Let $c>0$ and suppose that the real-valued function $V$
belongs to $H^1(B_r^+)\cap L^\infty(B_r^+)$ for every $0<r<R_0$
and satisfies
\[
 \Delta V=0\quad\text{in }B_{R_0}^+,\qquad
 -V_y=-c |V|^{p-1}V\quad\text{on }I_{R_0}.
\]
Then  the functional
\begin{equation*}
 W_c(V,r)=r^{-2\kappa}\left(
 \int_{B_r^+}|\nabla V|^2+
 \frac{2c}{p+1}\int_{I_r}|V|^{p+1}\right)
 -\kappa r^{-2\kappa-1}\int_{S_r^+}V^2
\end{equation*}
satisfies
\begin{equation}\label{t134:eq:weiss-general-derivative}
 \frac{d}{dr}W_c(V,r)=2r^{-2\kappa}\int_{S_r^+}
       \left(\partial_rV-\frac\kappa rV\right)^2\ge0, \qquad a.e.\qquad  0<r<R_0.
\end{equation}
In particular, if $W_c(V,\cdot)$ is constant on a nontrivial interval,
then $V$ is $\kappa$-homogeneous.
\end{lemma}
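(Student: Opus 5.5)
The identity is the standard Weiss computation, carried out on half-disks. First, by scaling, differentiate the three pieces of $W_c$ separately. Set
\[
 D(r)=\int_{B_r^+}|\nabla V|^2+\frac{2c}{p+1}\int_{I_r}|V|^{p+1},
 \qquad H(r)=\int_{S_r^+}V^2 .
\]
For a.e.\ $r$ the coarea formula gives
\[
 D'(r)=\int_{S_r^+}|\nabla V|^2+\frac{2c}{p+1}\bigl(|V(r,0)|^{p+1}+|V(-r,0)|^{p+1}\bigr).
\]
To express $D'$ through radial derivatives, use the Rellich--Pohozaev identity. Test the equation against $X\cdot\nabla V$ on $B_r^+$; in two dimensions the bulk term $\int|\nabla V|^2$ has zero coefficient. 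On the diameter $X\cdot\nabla V=xV_x$, and the boundary law gives
\[
 c|V|^{p-1}V\,xV_x=\frac{c}{p+1}\,x\,\partial_x|V|^{p+1}.
\]
Integrating by parts in $x$ yields the endpoint contributions $\frac{c}{p+1}r\bigl(|V(r,0)|^{p+1}+|V(-r,0)|^{p+1}\bigr)$ minus $\frac{c}{p+1}\int_{I_r}|V|^{p+1}$. The result is
\[
 r\int_{S_r^+}|\nabla V|^2=2r\int_{S_r^+}(\partial_rV)^2+(\text{endpoint terms})-\frac{2c}{p+1}\int_{I_r}|V|^{p+1}\cdot(\ldots),
\]
with the constants to be tracked carefully.

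Second, the energy identity comes from testing with $V$ itself:
\[
 \int_{B_r^+}|\nabla V|^2+c\int_{I_r}|V|^{p+1}=\int_{S_r^+}V\partial_rV .
\]
Third, differentiate $H$:
\[
 H'(r)=\frac1r H(r)+2\int_{S_r^+}V\partial_rV .
\]
Assemble these in
\[
 \frac{d}{dr}W_c=r^{-2\kappa}D'-2\kappa r^{-2\kappa-1}D-\kappa r^{-2\kappa-1}H'+\kappa(2\kappa+1)r^{-2\kappa-2}H .
\]
Substitute the Pohozaev form of $D'$. Then use the energy identity to replace the Dirichlet part of $D$ by $\int_{S_r^+}V\partial_rV-c\int_{I_r}|V|^{p+1}$.

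The essential check is that the leftover diameter terms cancel exactly when $\kappa=1/(1-p)$. The endpoint terms from $D'$ and from Pohozaev should cancel each other. The bulk boundary integrals collect into a multiple of
\[
 \int_{I_r}|V|^{p+1}\Bigl(-\frac{2c}{p+1}\cdot\frac1r\cdot(\text{Pohozaev coefficient})-\frac{4c\kappa}{(p+1)r}+\frac{2c\kappa}{r}\Bigr),
\]
and this coefficient vanishes precisely because $2\kappa(1-\frac{2}{p+1})$ matches the Pohozaev term, i.e.\ $\kappa(1-p)=1$. The remaining sphere terms then form a perfect square, $2r^{-2\kappa}\int_{S_r^+}(\partial_rV-\frac{\kappa}{r}V)^2$.

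The main obstacle is justifying the Pohozaev identity at the available regularity: $V\in H^1\cap L^\infty$, a boundary flux that is only Hölder, and a nonlinearity that is not Lipschitz at zeros. I would first try regularizing by testing on $B_r^+\cap\{y>\epsilon\}$ and letting $\epsilon\to0$. This is justified by the even-reflection/potential representation: Lemma~\ref{t12:lem:potential}-type estimates give $V\in C^{1,\gamma}$ up to $I_{R_0}$ once the flux $c|V|^{p-1}V$ is Hölder, and that is a bootstrap from the De Giorgi–Nash Hölder continuity. This makes $xV_x\,\partial_yV$ integrable on the diameter. The chain rule $\partial_x|V|^{p+1}=(p+1)|V|^{p-1}VV_x$ holds for $C^1$ traces since $p+1>1$. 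Coarea differentiability holds for a.e.\ $r$, and absolute continuity of $W_c$ follows.

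Finally, for the last claim: if $W_c$ is constant on an interval, the integrand vanishes for a.e.\ $r$ there, so $\partial_r(r^{-\kappa}V)=0$ on those arcs. By real analyticity of harmonic $V$ in the interior, $V(s\omega)=s^\kappa V(\omega)$ holds throughout the half-disk (unique continuation of the Euler relation $X\cdot\nabla V=\kappa V$), so $V$ is $\kappa$-homogeneous.
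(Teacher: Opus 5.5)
Your proposal is correct and is essentially the paper's proof. It uses the same quantities $D$ and $H$, the energy identity from testing with $V$, and the Rellich--Pohozaev identity with the diameter term written as a multiple of $x\,\partial_x|V|^{p+1}$. The $\int_{I_r}|V|^{p+1}$ terms cancel exactly because $\kappa(1-p)=1$, and the identities are justified on truncated half-disks by the same potential/even-reflection bootstrap to $C^{1,\gamma}$ up to the diameter.

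The constants you left open in the Pohozaev display come out as $r\int_{S_r^+}|\nabla V|^2=2r\int_{S_r^+}(\partial_rV)^2-\frac{2c}{p+1}\bigl(rB'(r)-B(r)\bigr)$, with $B(r)=\int_{I_r}|V|^{p+1}$. Your unique-continuation argument for the Euler relation $X\cdot\nabla V=\kappa V$ supplies the homogeneity clause, which the paper states without detail.
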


\begin{proof}
The bounded Neumann datum $h=-c|V|^{p-1}V$ has a cutoff logarithmic
potential with modulus $C\delta(1+|\log\delta|)$; the remainder
extends harmonically by even reflection. Hence $V\in C^{0,\alpha}$
locally up to the diameter for every $\alpha<1$, and
$h\in C^{0,\alpha p}$. Subtracting the density at the evaluation
point in the differentiated kernel gives the gradient modulus
\[
 C\int_0^{2\delta}t^{\alpha p-1}\dd t
 +C\delta\int_{2\delta}^Rt^{\alpha p-2}\dd t
 \le C\delta^{\alpha p}.
\]
Thus $V\in C^{1,\gamma}$ for every $\gamma<p$.

Set
\[
 B(r)=\int_{I_r}|V|^{p+1},\qquad
 H(r)=\int_{S_r^+}V^2,\qquad
 D(r)=\int_{B_r^+}|\nabla V|^2+\frac{2c}{p+1}B(r).
\]
Then $W_c(V,r)=r^{-2\kappa}D(r)-\kappa r^{-2\kappa-1}H(r)$.
Under the rescaling $V^{(r)}(Z)=r^{-\kappa}V(rZ)$, the Dirichlet,
flat-boundary, and semicircle integrals acquire, respectively, the factors
\[
 r^{2-2\kappa}r^{-2}=r^{-2\kappa},\qquad
 r^{-\kappa(p+1)}r^{-1}=r^{-2\kappa},\qquad
 r^{-2\kappa}r^{-1}=r^{-2\kappa-1},
\]
where $1+\kappa(p+1)=2\kappa$.

Since the outward normal on $I_r$ is $-e_y$, Green's identity gives
\[
 \int_{B_r^+}|\nabla V|^2
 =\int_{S_r^+}V\partial_rV-cB(r),\qquad
 D(r)=\int_{S_r^+}V\partial_rV+\frac{c(1-p)}{p+1}B(r).
\]
Differentiating the volume and interval integrals, and writing
$H(r)=r\int_0^\pi V(r\cos\theta,r\sin\theta)^2\dd\theta$, yield
\begin{align*}
 B'(r)&=|V(r,0)|^{p+1}+|V(-r,0)|^{p+1},\\
 D'(r)&=\int_{S_r^+}|\nabla V|^2+\frac{2c}{p+1}B'(r),\\
 H'(r)&=\frac{H(r)}r+2\int_{S_r^+}V\partial_rV.
\end{align*}

To obtain the Pohozaev identity, consider
$\mathcal P=|\nabla V|^2X-2(X\cdot\nabla V)\nabla V$.
In dimension two,
$\operatorname{div}\mathcal P=-2(X\cdot\nabla V)\Delta V=0$.
Its boundary fluxes are
\[
 \mathcal P\cdot\nu=
 \begin{cases}
 r|\nabla V|^2-2r(\partial_rV)^2,&\text{on }S_r^+,\\
 2cx|V|^{p-1}V\partial_xV
 =\dfrac{2c}{p+1}x\partial_x(|V|^{p+1}),&\text{on }I_r.
 \end{cases}
\]
The endpoint terms on the diameter satisfy
\[
 \int_{-r}^r x\partial_x(|V(x,0)|^{p+1})\dd x
 =r\bigl(|V(r,0)|^{p+1}+|V(-r,0)|^{p+1}\bigr)-B(r)
 =rB'(r)-B(r).
\]
Thus the divergence theorem gives
\[
 0=r\int_{S_r^+}|\nabla V|^2
   -2r\int_{S_r^+}(\partial_rV)^2
   +\frac{2c}{p+1}\bigl(rB'(r)-B(r)\bigr),
\]
and hence
\[
 rD'(r)=2r\int_{S_r^+}(\partial_rV)^2+\frac{2c}{p+1}B(r).
\]
These identities follow by integration on truncated half-disks and
passage to the flat boundary using the established $C^1$ regularity.

We now differentiate both powers of $r$ in $W_c(V,r)$ and substitute
the identities for $D$, $D'$, and $H'$:
\begin{align*}
 \frac{d}{dr}W_c(V,r)
 ={}&r^{-2\kappa}D'(r)-2\kappa r^{-2\kappa-1}D(r)\\
 &-\kappa r^{-2\kappa-1}H'(r)
   +\kappa(2\kappa+1)r^{-2\kappa-2}H(r)\\
 ={}&2r^{-2\kappa}\int_{S_r^+}(\partial_rV)^2
   -4\kappa r^{-2\kappa-1}\int_{S_r^+}V\partial_rV\\
 &+2\kappa^2r^{-2\kappa-2}H(r)
   +\frac{2c}{p+1}\bigl[1-\kappa(1-p)\bigr]r^{-2\kappa-1}B(r)\\
 ={}&2r^{-2\kappa}\int_{S_r^+}
       \left(\partial_rV-\frac\kappa rV\right)^2.
\end{align*}
\end{proof}

\begin{lemma}
\label{t134:ni:lem:compactness}
On expanding neighborhoods of the origin, let
$\mathcal D_j=\{y>\psi_j(x)\}$, with
$\psi_j\to0$ in $C^1_{\mathrm{loc}}(\R)$, and set
$\mathcal T_j(x,y)=(x,y+\psi_j(x))$.
Let $v_j$ be real harmonic functions in $\mathcal D_j$, locally
$H^1$ up to the graph boundary, satisfying there in the sense of normal traces
\[
 \partial_{\nu_j}v_j
 =\rho_j(x)\bigl(\epsilon_jv_j-\mu_j|v_j|^{p-1}v_j\bigr),
 \qquad \epsilon_j\ge0,\quad\epsilon_j\to0,\quad\mu_j\ge0.
\]
Assume the measurable weights satisfy $0<c_0\le\rho_j\le c_1$
and $\rho_j\to\rho_0>0$ locally uniformly, with $\rho_0$ constant.
Suppose, for every fixed $R>0$ and all sufficiently large $j$, that
\[
 \int_{\mathcal D_j\cap B_R}v_j^2\leq C_R.
\]
Then $\widetilde v_j:=v_j\circ\mathcal T_j$ is uniformly bounded in
$H^1(B_R^+)\cap L^\infty(B_R^+)$ for each $R>0$, independently of $\mu_j$.
After passing to a subsequence, $\mu_j\to\mu_0\in[0,\infty]$ and
$\widetilde v_j\to V$ strongly in $H^1(B_R^+)$ for every $R>0$,
where $V$ is harmonic in $\HH$. Moreover:
\begin{enumerate}[label=\textup{(\roman*)}]
\item If $\mu_0<\infty$, the convergence is locally uniform up to
the flat boundary, and
\[
 -V_y=-\mu_0\rho_0|V|^{p-1}V\quad\text{on }\R.
\]
\item If $\mu_0=\infty$, then $V=0$ on $\R$ and, for every $R>0$,
\[
 \mu_j\int_{\partial\mathcal D_j\cap B_R}|v_j|^{p+1}\longrightarrow0.
\]
\end{enumerate}
For every $R>0$, the $L^2$ masses and Dirichlet energies over
$\mathcal D_j\cap B_R$ converge to those of $V$ over $B_R^+$.
\end{lemma}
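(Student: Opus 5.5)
The plan is to pull everything back to the flat half-plane with $\mathcal T_j$, get bounds that do not depend on $\mu_j$ from the Moser estimate of Lemma~\ref{lem:moser}, extract a limit, and identify its boundary law by splitting into the cases $\mu_0<\infty$ and $\mu_0=\infty$.

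\emph{Uniform bounds.} In the flat coordinates, $\widetilde v_j$ solves the weak problem \eqref{eq:graph-weak}. As in \eqref{eq:matrix}, the matrix $A_j$ is built from $\psi_j'$, and the boundary coefficients are $a_j=\epsilon_jJ_j\rho_j$ and $b_j=\mu_jJ_j\rho_j\ge0$. Since $\psi_j\to0$ in $C^1_{\rm loc}$, the ellipticity constants are uniform, and $a_j\le1$ for large $j$ because $\epsilon_j\to0$. Lemma~\ref{lem:moser}, applied on translated and dilated squares of fixed size covering $\overline{B_R^+}$, gives an $L^\infty(B_R^+)$ bound in terms of $\int_{\mathcal D_j\cap B_{2R}}v_j^2\le C_{2R}$. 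This bound is independent of $\mu_j$, since that lemma does not depend on $b$. Next test with $\eta^2\widetilde v_j$. The restoring term has a favorable sign, giving
\[
\int\eta^2A_j\nabla\widetilde v_j\cdot\nabla\widetilde v_j+\mu_j\int\eta^2J_j\rho_j|\widetilde v_j|^{p+1}\le C\int\widetilde v_j^2|\nabla\eta|^2+C\epsilon_j\int_{I}\eta^2\widetilde v_j^2+(\text{cross term}),
\]
where the cross term is absorbed by Young's inequality. This bounds the $H^1(B_R^+)$ norm, and also bounds $\mu_j\int_{I_R}|\widetilde v_j|^{p+1}$.

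\emph{Limit and identification.} By a diagonal argument, pass to $\mu_j\to\mu_0\in[0,\infty]$ and $\widetilde v_j\rightharpoonup V$ weakly in $H^1_{\rm loc}$, strongly in $L^2$, and strongly in $L^2$ of traces by compactness of the trace map. Interior equicontinuity is automatic, since $A_j\to I$ uniformly and De Giorgi--Nash applies; so $V$ is harmonic in $\HH$.

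If $\mu_0<\infty$, the Neumann data $\rho_jJ_j(\epsilon_j\widetilde v_j-\mu_j|\widetilde v_j|^{p-1}\widetilde v_j)$ are uniformly bounded. The reflected De Giorgi--Nash estimate, the one used in Lemma~\ref{t15:lem:boundary-c1}, then gives a uniform $C^{0,\alpha}$ bound up to $I_R$. Hence the convergence is locally uniform up to the flat boundary. Passing to the limit in the weak formulation, using $\rho_jJ_j\to\rho_0$, gives $-V_y=-\mu_0\rho_0|V|^{p-1}V$.

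If $\mu_0=\infty$, the bound on $\mu_j\int_{I_R}|\widetilde v_j|^{p+1}$ forces the traces to tend to zero in $L^{p+1}$. Since the traces also converge in $L^2$, this gives $V=0$ on $\R$.

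\emph{Strong convergence and energy identity (the main obstacle).} This step is delicate mainly when $\mu_0=\infty$. I would test the $j$-th equation with $\eta\widetilde v_j$, where $\eta$ is a cutoff, and compare with the limit identity $\int\nabla V\cdot\nabla(\eta V)=\int_{\R}(\text{limit flux})\,\eta V$.

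When $\mu_0<\infty$, uniform convergence on the boundary lets the boundary terms pass to the limit. This yields $\int\eta A_j\nabla\widetilde v_j\cdot\nabla\widetilde v_j\to\int\eta|\nabla V|^2$, hence strong convergence.

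When $\mu_0=\infty$, the limit identity has no boundary term because $V=0$ on $\R$. So I need
\[
\limsup_j\Bigl(\int\eta\,A_j\nabla\widetilde v_j\cdot\nabla\widetilde v_j+\mu_j\int\eta J_j\rho_j|\widetilde v_j|^{p+1}\Bigr)\le\int\eta|\nabla V|^2.
\]
The left side equals $-\int\widetilde v_jA_j\nabla\widetilde v_j\cdot\nabla\eta+o(1)$, which converges to $-\int V\nabla V\cdot\nabla\eta$. That in turn equals $\int\eta|\nabla V|^2$, by testing the Dirichlet limit $V$ with $\eta V$. This gives strong convergence and, at the same time, the claimed decay $\mu_j\int|v_j|^{p+1}\to0$ on $B_R$. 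Here I use that $\eta$ can be chosen positive on $I_R$, and that the boundary measures transform with $J_j\to1$.

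Finally, $\det D\mathcal T_j=1$ and $D\mathcal T_j\to I$, together with $\mathbf 1_{\mathcal D_j\cap B_R}\to\mathbf 1_{B_R^+}$ a.e., transfer the convergence of $L^2$ masses and Dirichlet energies to $\mathcal D_j\cap B_R$. Strong $H^1$ convergence on slightly larger half-balls handles the symmetric-difference regions.
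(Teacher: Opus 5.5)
Your proposal is correct. Its skeleton matches the paper: flattening by $\mathcal T_j$, then Caccioppoli and Moser bounds that discard the nonnegative penalty, then Rellich and trace compactness, the reflected De Giorgi--Nash estimate for uniform convergence when $\mu_0<\infty$, and finally the transfer back through $\det D\mathcal T_j=1$.

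You depart from the paper only in the strong $H^1$ step. The paper tests the difference of the equations with $\eta^2(\widetilde v_j-V)$.
\begin{itemize}
\item For $\mu_0<\infty$ it uses monotonicity of $s\mapsto|s|^{p-1}s$.
\item For $\mu_0=\infty$ it first makes $V$ smooth up to the diameter by odd reflection. It then subtracts the Green identity of $V$, including the boundary flux $q=\partial_\nu V$, and lets $\int\eta^2 q\,\widetilde v_j\to0$ by trace convergence.
\end{itemize}
You instead prove convergence of the weighted energies $\int\eta A_j\nabla\widetilde v_j\cdot\nabla\widetilde v_j$. You do this by testing with $\eta\widetilde v_j$, comparing with the identity for $V$ tested against $\eta V$, and closing with weak lower semicontinuity.

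In case (ii) your route is slightly more economical. Since $\eta V\in H^1_0(B_{2R}^+)$, the identity $\int\nabla V\cdot\nabla(\eta V)=0$ needs neither boundary regularity of $V$ nor its flux $q$. The decay of $\mu_j\int\eta J_j\rho_j|\widetilde v_j|^{p+1}$ then falls out of the same inequality.

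In case (i) you replace monotonicity by the convergence $\mu_j\int b_j\eta|\widetilde v_j|^{p+1}\to\mu_0\rho_0\int\eta|V|^{p+1}$. This is immediate from the uniform bounds and $L^2$ trace convergence. The paper's difference test has the advantage of estimating $\int\eta^2|\nabla(\widetilde v_j-V)|^2$ directly, but for this qualitative compactness statement your energy argument suffices.

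One small caveat: Lemma~\ref{lem:moser} assumes $v\in L^\infty$ a priori, which is not among the hypotheses here. As the paper indicates, truncated power tests remove this assumption. In all the applications the $v_j$ are continuous anyway.
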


\begin{proof}
Flattening on a fixed half-rectangle gives
\begin{equation}\label{t134:ni:eq:flattened}
 \int A_j\nabla\widetilde v_j\cdot\nabla\zeta
 +\mu_j\int b_j|\widetilde v_j|^{p-1}\widetilde v_j\zeta
 -\epsilon_j\int b_j\widetilde v_j\zeta=0,
\end{equation}
where $A_j$ is the matrix in \eqref{eq:matrix} with $\psi=\psi_j$
and $b_j=\rho_j\sqrt{1+\psi_j'^2}$.
Thus $A_j\to I$ and $b_j\to\rho_0$ uniformly, with uniform
ellipticity and positive weight bounds. Since $\det D\mathcal T_j=1$,
the assumed volume bounds control the flattened norms.
Testing with $\eta^2\widetilde v_j$ and absorbing the linear
boundary term by the trace inequality gives
\begin{equation}\label{t134:ni:eq:caccioppoli}
 \int\eta^2|\nabla\widetilde v_j|^2
 +\mu_j\int b_j\eta^2|\widetilde v_j|^{p+1}
 \le C\int(|\nabla\eta|^2+1)\widetilde v_j^2.
\end{equation}
The truncated power tests used in Lemma~\ref{lem:moser} similarly give
\begin{equation*}
 \|\widetilde v_j\|_{L^\infty(B_R^+)}
 \le C_R\|\widetilde v_j\|_{L^2(B_{2R}^+)}.
\end{equation*}
Both constants are independent of $\mu_j$, because its boundary
term is nonnegative and can be discarded. Rellich compactness
and compactness of the trace give a subsequence converging weakly
in local $H^1$ and strongly in local volume and trace $L^2$
to a harmonic function $V$. Passing to a further subsequence,
we may assume $\mu_j\to\mu_0\in[0,\infty]$.

If $\mu_j\to\mu_0<\infty$, strong trace convergence passes
\eqref{t134:ni:eq:flattened} to the limiting boundary equation.
Subtract it and test with $\eta^2(\widetilde v_j-V)$.
Monotonicity makes
$\mu_jb_j(|\widetilde v_j|^{p-1}\widetilde v_j-|V|^{p-1}V)(\widetilde v_j-V)$ nonnegative;
uniform coefficient convergence and Young's inequality control
the other terms. Strong volume convergence then proves strong
local $H^1$ convergence. For uniform boundary compactness, the
conormal data
$h_j=b_j(\epsilon_j\widetilde v_j-\mu_j|\widetilde v_j|^{p-1}\widetilde v_j)$
are locally bounded. With $F_j=(0,-h_j(x))$, the equation is
$\operatorname{div}(A_j\nabla\widetilde v_j)=\operatorname{div}F_j$.
Even reflection, with the corresponding parities for $A_j,F_j$,
gives an interior uniformly elliptic equation with bounded
divergence data. The interior H\"older estimate and
Arzel\`a--Ascoli yield locally uniform convergence.

If $\mu_j\to\infty$, \eqref{t134:ni:eq:caccioppoli} gives
$V=0$ on $\R$. Odd reflection makes $V$ smooth across the diameter;
write $q=\partial_\nu V$ there. Subtract its Green identity,
including the boundary term, from \eqref{t134:ni:eq:flattened}
and use $\eta^2w_j$ as the test function, where
$w_j=\widetilde v_j-V$.
The penalty is exactly
$\mu_j\int b_j\eta^2|\widetilde v_j|^{p+1}$.
The remaining boundary terms
\[
 \epsilon_j\int b_j\eta^2\widetilde v_j^2
       -\int\eta^2q\widetilde v_j
\]
tend to zero by trace convergence. The bulk error involving
$(A_j-I)\nabla V$ and the cutoff term are controlled by Young's
inequality and strong volume convergence. This proves both
strong $H^1$ convergence and vanishing of the weighted penalty;
$\rho_j\ge c_0$ gives its unweighted version. The argument retains
$\mu_jb_j$ exactly and requires no bound on
$\mu_j(b_j-\rho_0)$.

Finally, $\mathcal T_j\to\operatorname{id}$ in $C^1$ and the
pulled-back ball indicators converge almost everywhere.
Strong $L^2$ convergence of the functions and gradients gives
the asserted convergence of  volume and energy integrals.
\end{proof}

\subsection{Noninteger critical order on a \texorpdfstring{$C^1$}{C1} domain}
\label{t134:sec:noninteger}
Throughout this subsection, we work under the hypotheses of
Theorem~\ref{t134:ni:thm:main}, with $\ord_0u=\kappa\notin\N$.

In the applications below, we take
$\mathcal D_j=\Omega_{r_j}$,
$\psi_j(x)=r_j^{-1}\psi(r_jx)$,
$\mathcal T_j=T_{r_j}$, $\rho_j\equiv1$, and
$\epsilon_j=\lambda r_j\to0$.

Fix $r_0>0$ sufficiently small and set $t_0=-\log r_0$.
For $0<r\leq r_0$ and $t\geq t_0$, define
\begin{equation*}
 \mathcal M(r)=\left(r^{-2\kappa-2}\int_{\Omega\cap B_r}u^2\right)^{1/2},
 \qquad A(t)=\mathcal M(e^{-t}).
\end{equation*}
These functions are positive and continuous. By the definition of
the vanishing order in \eqref{eq:order}, the assumption
$\ord_0u=\kappa$ implies that, for every $\epsilon>0$, there exists
a constant $C_\epsilon>0$ such that
\begin{equation}\label{t134:ni:eq:order-facts}
 \begin{aligned}
  A(t)\leq C_\epsilon e^{\epsilon t}\qquad
          \text{for all }t\geq t_0,\qquad
  \limsup_{t\to\infty}e^{\epsilon t}A(t)=+\infty.
 \end{aligned}
\end{equation}

\begin{lemma}\label{t134:ni:lem:scale-selection}
Let $t_0\in\R$ and let $A:[t_0,\infty)\to(0,\infty)$ be continuous.
Suppose that, for every $\epsilon>0$, there is $C_\epsilon>0$ such that
\[
 A(t)\leq C_\epsilon e^{\epsilon t}\qquad(t\geq t_0).
\]
For every fixed $\delta>0$, the following statements hold.
\begin{enumerate}[label=\textup{(\roman*)}]
\item If $A$ is unbounded, there exist $s_j\to\infty$ and
$a_j=A(s_j)\to\infty$ such that
\begin{equation}\label{t134:ni:eq:envelope}
 A(t)\leq a_j e^{\delta|t-s_j|}\qquad(t\geq t_0).
\end{equation}
\item If $A(t)\to0$ as $t\to\infty$ and
\[
 \limsup_{t\to\infty}e^{\epsilon t}A(t)=+\infty
 \qquad\text{for every }\epsilon>0,
\]
then there exist $s_j\to\infty$ and $a_j=A(s_j)\to0$
satisfying \eqref{t134:ni:eq:envelope}.
\end{enumerate}
\end{lemma}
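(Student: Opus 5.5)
The plan is a maximization argument: for a large parameter $T$ we maximize a suitably weighted copy of $A$ and take a maximizer as $s_j$. The weight is chosen so that the maximizing property is exactly the one-sided halves of \eqref{t134:ni:eq:envelope}. We take $T=T_j\to\infty$ and $s_j$ the corresponding maximizer. Continuity of $A$ and the decay of the weighted function at infinity guarantee that maxima exist. The two cases are mirror images: in (i) the weight penalizes large $t$, in (ii) it rewards large $t$ up to the cutoff $T$.

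\emph{Case (i).} For $T\ge t_0$ set $h_T(t)=A(t)e^{-\delta(t-T)_+}$. The hypothesis with $\epsilon=\delta/2$ gives $h_T(t)\le C e^{\delta T}e^{-\delta t/2}\to0$ as $t\to\infty$. Hence $h_T$ attains its maximum on $[t_0,\infty)$ at some $s_T$.
\begin{itemize}
\item For $t\ge s_T$ we have $(t-T)_+-(s_T-T)_+\le t-s_T$, so the maximizing property gives $A(t)\le A(s_T)e^{\delta(t-s_T)}$.
\item For $t<s_T$ we have $(s_T-T)_+\ge(t-T)_+$, so $A(t)\le A(s_T)$.
\end{itemize}
Together these give \eqref{t134:ni:eq:envelope}. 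Moreover $A(s_T)\ge h_T(s_T)\ge\max_{[t_0,T]}A\to\infty$ by unboundedness. Since $A$ is bounded on compact intervals, this forces $s_T\to\infty$. Taking $T=T_j\to\infty$ yields the required sequences.

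\emph{Case (ii).} Set $k_T(t)=A(t)e^{\delta\min(t,T)}$. Beyond $T$ this equals $e^{\delta T}A(t)$, which tends to $0$, so a maximizer $s_T$ exists.
\begin{itemize}
\item For $t<s_T$ we have $\min(s_T,T)-\min(t,T)\le s_T-t$, hence $A(t)\le A(s_T)e^{\delta(s_T-t)}$.
\item For $t>s_T$ the weight does not decrease, so $A(t)\le A(s_T)$.
\end{itemize}
This again gives \eqref{t134:ni:eq:envelope}. The limsup hypothesis, applied with $\epsilon=\delta$, shows $\max_{[t_0,T]}A(t)e^{\delta t}\to\infty$. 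It follows that
\[
\sup A\cdot e^{\delta s_T}\ \ge\ A(s_T)e^{\delta s_T}\ \ge\ k_T(s_T)\ \ge\ \max_{[t_0,T]}A(t)e^{\delta t}\ \to\ \infty .
\]
Since $A\to0$, $\sup A<\infty$, so $s_T\to\infty$. Therefore $a_j=A(s_{T_j})\to0$.

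The only point requiring care is existence of the maximizer: the weighted function must tend to zero at infinity. In (i) this is exactly where the subexponential bound is used; in (ii) it follows from $A\to0$. Everything else is elementary inequalities for $(\cdot)_+$ and $\min$, which I expect to be routine.
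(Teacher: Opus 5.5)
Your proof is correct and uses the same weighted-maximization idea as the paper; the paper takes the single symmetric weight $e^{-\delta|t-T|}$ in both cases and gets the envelope from $|t-T|-|s-T|\le|t-s|$. Your one-sided weights $e^{-\delta(t-T)_+}$ and $e^{\delta\min(t,T)}$ give a slightly sharper envelope, with $A\le a_j$ on one side of $s_j$, which is the kind of one-sided bound the paper later obtains in Steps 3 and 6 of the integer critical case.
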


\begin{proof}
For $T\geq t_0$, set
\[
 F_T(t):=A(t)e^{-\delta|t-T|},\qquad t\geq t_0.
\]
By the assumed growth bound,
\[
 0<F_T(t)\leq C_{\delta/2}e^{\delta T-\delta t/2}
 \longrightarrow0\qquad(t\to\infty,\ t\geq T).
\]
Thus continuity gives a maximizer
$s(T)\in\operatorname*{arg\,max}_{t\geq t_0}F_T(t)$, and
\begin{align*}
 A(t)&\leq A(s(T))e^{\delta(|t-T|-|s(T)-T|)}\\
     &\leq A(s(T))e^{\delta|t-s(T)|},\qquad t\geq t_0.
\end{align*}
For \textup{(i)}, choose $T_j\to\infty$ with $A(T_j)\to\infty$
and set $s_j=s(T_j)$, $a_j=A(s_j)$. Then
\[
 a_j\geq F_{T_j}(s_j)\geq F_{T_j}(T_j)=A(T_j)\longrightarrow\infty.
\]
Since $A$ is bounded on compact intervals, $s_j\to\infty$;
the preceding estimate gives \eqref{t134:ni:eq:envelope}.

For \textup{(ii)}, the limsup assumption with $\epsilon=\delta/2$
gives a sequence $T_j\to\infty$ such that
\[
 A(T_j)\geq e^{-\delta T_j/2}.
\]
Again set $s_j=s(T_j)$ and $a_j=A(s_j)$. For each fixed $K>t_0$,
write $M_K=\max_{[t_0,K]}A$. For $T_j>K$,
\[
 \frac{\sup_{t_0\leq t\leq K}F_{T_j}(t)}{F_{T_j}(T_j)}
 \leq\frac{M_K e^{-\delta(T_j-K)}}{e^{-\delta T_j/2}}
 =M_K e^{\delta K-\delta T_j/2}\longrightarrow0.
\]
Hence $s_j>K$ for all sufficiently large $j$, so that
\[
 s_j\longrightarrow\infty,\qquad a_j=A(s_j)\longrightarrow0,
\]
and \eqref{t134:ni:eq:envelope} holds in this case as well.
\end{proof}

\begin{lemma}
\label{t134:ni:lem:harmonic-vanishing}
Let $\kappa>0$ be noninteger, and let $\delta$ satisfy
\begin{equation}\label{t134:ni:eq:delta}
 0<\delta<\min\{\kappa-\lfloor\kappa\rfloor,
                         \lfloor\kappa\rfloor+1-\kappa\}.
\end{equation}
Let $V$ be a real harmonic function in $\HH$, with
$V\in H^1(B_R^+)$ for every $R>0$. Assume that $V$ has either
zero Neumann trace or zero Dirichlet trace on
$\partial\HH$, and that
\begin{equation}\label{t134:ni:eq:window}
 \int_{B_R^+}V^2\leq C R^{2\kappa+2}e^{2\delta|\log R|}
 \quad\text{for every }R>0,
\end{equation}
with $C>0$ independent of $R$. Then $V\equiv0$.
\end{lemma}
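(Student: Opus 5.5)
Reflection turns $V$ into an entire harmonic function, and the growth window then pins it to a single degree that cannot exist because $\kappa$ is not an integer.

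\emph{Step 1: extend $V$ to all of $\R^2$.} In the Neumann case I take the even reflection $V(x,-y)=V(x,y)$. In the Dirichlet case I take the odd reflection $V(x,-y)=-V(x,y)$. Since $V\in H^1(B_R^+)$ with zero Neumann, respectively zero Dirichlet, trace, the reflected function is weakly harmonic in $B_R$ for every $R$. By Weyl's lemma it is an entire harmonic function $\widetilde V$. Its $L^2$ mass on $B_R$ is twice that on $B_R^+$. Hence \eqref{t134:ni:eq:window} gives
\[
 \int_{B_R}\widetilde V^2\le 2C R^{2\kappa+2}e^{2\delta|\log R|}.
\]

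\emph{Step 2: expand in homogeneous harmonic polynomials.} Write
\[
 \widetilde V=\sum_{j\ge0}\bigl(a_j\Rea(z^j)+b_j\Ima(z^j)\bigr),
\]
which converges locally uniformly. By orthogonality of $\cos j\theta$ and $\sin j\theta$ on circles,
\[
 \int_{B_R}\widetilde V^2=\sum_j c_j\,(a_j^2+b_j^2)\,R^{2j+2},\qquad c_j>0 .
\]
Each summand is therefore bounded by the whole left-hand side, so
\[
 (a_j^2+b_j^2)R^{2j+2}\le C' R^{2\kappa+2}e^{2\delta|\log R|}\quad\text{for all }R>0 .
\]

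\emph{Step 3: use both ends of the scale range.}

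For $R\to\infty$ the bound reads $(a_j^2+b_j^2)R^{2j}\le C'R^{2(\kappa+\delta)}$. This forces $a_j=b_j=0$ whenever $j>\kappa+\delta$. By \eqref{t134:ni:eq:delta}, $\kappa+\delta<\lfloor\kappa\rfloor+1$, so all coefficients with $j\ge\lfloor\kappa\rfloor+1$ vanish.

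For $R\to0$ the factor is $e^{2\delta|\log R|}=R^{-2\delta}$, and the bound reads $(a_j^2+b_j^2)R^{2j}\le C'R^{2(\kappa-\delta)}$. This forces $a_j=b_j=0$ whenever $j<\kappa-\delta$. Again by \eqref{t134:ni:eq:delta}, $\kappa-\delta>\lfloor\kappa\rfloor$, so all coefficients with $j\le\lfloor\kappa\rfloor$ vanish.

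Every integer $j$ falls into one of these two cases, because $\kappa\notin\N$. Hence $\widetilde V\equiv0$, and so $V\equiv0$.

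\emph{Main obstacle.} The only delicate point is justifying the reflection for a function that is merely $H^1$ with a trace condition. For the Neumann case, the zero conormal condition in weak form shows that the even extension satisfies $\int\nabla\widetilde V\cdot\nabla\zeta=0$ for all $\zeta\in C_c^\infty(\R^2)$. To see this, split $\zeta$ into its even and odd parts in $y$. For the Dirichlet case, the zero trace makes the odd extension lie in $H^1_{\rm loc}(\R^2)$, and the same splitting applies. Weyl's lemma then yields smoothness. Everything else is the orthogonality computation above.
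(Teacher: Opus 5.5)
Your proposal is correct and matches the paper's proof: you reflect (even for Neumann, odd for Dirichlet) to an entire harmonic function, then use angular orthogonality of the harmonic expansion, with the bound as $R\to\infty$ removing all degrees above $\lfloor\kappa\rfloor$ and the bound as $R\to0$ removing all degrees at most $\lfloor\kappa\rfloor$. Your weak-form justification of the reflection via even/odd splitting of test functions and Weyl's lemma supplies a detail the paper leaves implicit.
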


\begin{proof}
Even reflection in the Neumann case and odd reflection in the Dirichlet
case extend $V$ to an entire harmonic function $\widetilde V$.
Set $m=\lfloor\kappa\rfloor$; then
$m<\kappa-\delta<\kappa+\delta<m+1$.
By harmonic expansion and angular orthogonality, the bound
\eqref{t134:ni:eq:window} as $R\to\infty$ implies that $\widetilde V$
is a harmonic polynomial of degree at most $m$.
The same bound as $R\rightarrow0$, together with $\kappa-\delta>m$,
forces all its homogeneous terms to vanish. Hence $V\equiv0$.
\end{proof}

\begin{proposition}\label{t134:ni:prop:upper}
Let $\Omega\subset\R^2$ be a bounded connected $C^1$ domain with
$0\in\partial\Omega$, in the normalized graph coordinates
\eqref{t134:eq:graph}. Let $0<p<1$, $\lambda\geq0$, and
$\kappa=(1-p)^{-1}$. Suppose that $u$ is real-valued and satisfies
the local problem \eqref{paper:eq:local-problem}, with the
regularity \eqref{paper:eq:local-class}. Assume that
\[
 u(0)=0,\qquad \ord_0u=\kappa\notin\N.
\]

Then there exist constants $C>0$ and $r_*>0$ such that
\[
 \int_{\Omega\cap B_r}u^2\leq Cr^{2\kappa+2}
 \qquad\text{for every }0<r<r_*.
\]
\end{proposition}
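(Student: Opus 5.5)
We argue by contradiction, using the scale-selection Lemma~\ref{t134:ni:lem:scale-selection} in the unbounded case together with the compactness Lemma~\ref{t134:ni:lem:compactness} and the Liouville-type Lemma~\ref{t134:ni:lem:harmonic-vanishing}. The claim is exactly that $A(t)=\mathcal M(e^{-t})$ is bounded on $[t_0,\infty)$. Suppose it is not. By \eqref{t134:ni:eq:order-facts}, $A$ satisfies the subexponential growth hypothesis. Fix $\delta$ as in \eqref{t134:ni:eq:delta}; this is possible since $\kappa\notin\N$. Lemma~\ref{t134:ni:lem:scale-selection}(i) then gives $s_j\to\infty$ and $a_j=A(s_j)\to\infty$ with the envelope \eqref{t134:ni:eq:envelope}. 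Put $r_j=e^{-s_j}\to0$ and define the normalized blow-ups
\[
 v_j(X)=\frac{u(r_jX)}{r_j^{\kappa}a_j},\qquad X\in\Om_{r_j}.
\]
By construction,
\[
 \int_{\Om_{r_j}\cap B_1}v_j^2=\frac{r_j^{-2\kappa-2}\int_{\Om\cap B_{r_j}}u^2}{a_j^2}=1 .
\]
For $0<R\le r_0/r_j$, the same change of variables gives
\[
 \int_{\Om_{r_j}\cap B_R}v_j^2
 =R^{2\kappa+2}\frac{A(s_j-\log R)^2}{a_j^2}
 \le R^{2\kappa+2}e^{2\delta|\log R|}.
\]

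Next we compute the rescaled boundary law. Since $\partial_\nu u$ scales like $r_j^{\kappa-1}a_j$ and $|u|^{p-1}u$ like $r_j^{\kappa p}a_j^{p}$, the function $v_j$ satisfies
\[
 \partial_{\nu_j}v_j=\lambda r_j\,v_j-r_j^{1+\kappa p-\kappa}a_j^{p-1}|v_j|^{p-1}v_j .
\]
The identity $1+\kappa p-\kappa=0$ reduces the nonlinear coefficient to $\mu_j=a_j^{p-1}\to0$, because $p<1$ and $a_j\to\infty$. We therefore apply Lemma~\ref{t134:ni:lem:compactness} with the following data: $\psi_j(x)=r_j^{-1}\psi(r_jx)\to0$ in $C^1_{\rm loc}$ by the $C^1$ hypothesis, $\rho_j\equiv1$, $\epsilon_j=\lambda r_j\to0$, and $\mu_j\to\mu_0=0$. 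The local mass bounds needed there follow from the envelope estimate, since $r_0/r_j\to\infty$.

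After passing to a subsequence, case (i) of that lemma gives a harmonic function $V$ on $\HH$. It has zero Neumann trace, lies in $H^1(B_R^+)$ for each $R$, and the masses converge. Passing to the limit in the two displayed mass relations yields
\[
 \int_{B_1^+}V^2=1,
 \qquad
 \int_{B_R^+}V^2\le R^{2\kappa+2}e^{2\delta|\log R|}\quad\text{for every }R>0 .
\]
This is precisely the window condition \eqref{t134:ni:eq:window} with $C=1$. Lemma~\ref{t134:ni:lem:harmonic-vanishing} therefore forces $V\equiv0$, which contradicts the unit mass on $B_1^+$. Hence $A$ is bounded, which is the asserted estimate with $r_*=r_0$.

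The step needing the most care is the transfer of mass to the limit. We need convergence of the $L^2$ masses over $\Om_{r_j}\cap B_R$, not merely weak convergence, since otherwise the normalization $\int_{B_1^+}V^2=1$ could be lost. This is supplied by the final assertion of Lemma~\ref{t134:ni:lem:compactness}, which rests on strong local $L^2$ convergence and $\mathcal T_j\to\mathrm{id}$. We must also check that the envelope bound holds uniformly in $j$ on each fixed ball, so that the compactness lemma applies; this is ensured by $r_0/r_j\to\infty$.
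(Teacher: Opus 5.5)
Your proposal is correct and follows the paper's proof essentially step for step. The paper likewise applies Lemma~\ref{t134:ni:lem:scale-selection}(i) in the unbounded case and rescales by $r_j^{\kappa}a_j$ with $\mu_j=a_j^{p-1}\to0$. It then obtains a Neumann limit from Lemma~\ref{t134:ni:lem:compactness} satisfying the window bound \eqref{t134:ni:eq:window} and a unit mass on $B_1^+$, and concludes with Lemma~\ref{t134:ni:lem:harmonic-vanishing}.
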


\begin{proof}
Fix $\delta$ satisfying \eqref{t134:ni:eq:delta}.
If $A$ were unbounded, \eqref{t134:ni:eq:order-facts} and
Lemma~\ref{t134:ni:lem:scale-selection}\textup{(i)} would give
$s_j\to\infty$ and $a_j=A(s_j)\to\infty$ satisfying
\eqref{t134:ni:eq:envelope}. Set $r_j=e^{-s_j}$ and define on $\Omega_{r_j}$
\[
 v_j(X)=\frac{u(r_jX)}{r_j^\kappa a_j}.
\]
The boundary parameters are
\[
 \epsilon_j=\lambda r_j\to0,\qquad
 \mu_j=a_j^{p-1}\to0,
\]
because $1+\kappa(p-1)=0$. For every fixed $R>0$ and all sufficiently
large $j$,
\[
 \int_{\Omega_{r_j}\cap B_1}v_j^2=1,
 \qquad
 \int_{\Omega_{r_j}\cap B_R}v_j^2
 =R^{2\kappa+2}\frac{A(s_j-\log R)^2}{a_j^2}
 \leq R^{2\kappa+2}e^{2\delta|\log R|}.
\]
Lemma~\ref{t134:ni:lem:compactness} gives a global Neumann harmonic limit
$V$ with $\int_{B_1^+}V^2=1$ and \eqref{t134:ni:eq:window}.
Lemma~\ref{t134:ni:lem:harmonic-vanishing} then yields $V\equiv0$,
contradicting the normalization.
\end{proof}

For a flat global solution of
\begin{equation}\label{t134:ni:eq:flat-limit}
 \Delta V=0\quad\text{in }\HH,
 \qquad -V_y=-|V|^{p-1}V\quad\text{on }\R,
\end{equation}
let
\begin{equation*}
 W(V,R)=R^{-2\kappa}\left(
 \int_{B_R^+}|\nabla V|^2+
 \frac2{p+1}\int_{-R}^R|V(x,0)|^{p+1}\dd x\right)
 -\kappa R^{-2\kappa-1}\int_{S_R^+}V^2.
\end{equation*}

It is easy to see that the $\kappa$-homogeneous solutions of \eqref{t134:ni:eq:flat-limit}
are precisely $0,P_+,P_-$,
where, as in Theorem~\ref{t134:ni:thm:main},
\[
P_\pm(R,\theta)=\pm\frac{a_\kappa}{\cos\beta}R^\kappa
 \cos(\kappa\theta-\beta),\qquad R>0,\quad0\leq\theta\leq\pi,
\]
with
\[
 \beta=\frac\pi2\bigl(\kappa-\lfloor\kappa\rfloor\bigr)
 \in(0,\pi/2),\qquad
 a_\kappa=(\kappa\tan\beta)^{-\kappa}.
\]
The Weiss values of $0$, $P_{+}$ and $P_{-}$ are
\begin{equation}\label{t134:ni:eq:critical-values}
 \mathcal E=\{0,W_*\},\qquad
 W_*:=W(P_\pm,1)=\frac{a_\kappa^{p+1}}{\kappa^2(p+1)}>0.
\end{equation}

In the original coordinates, define
\begin{equation*}
 \begin{split}
 \mathcal W(u,r)={}r^{-2\kappa}\left(
 \int_{\Omega\cap B_r}|\nabla u|^2
 +\frac2{p+1}\int_{\partial\Omega\cap B_r}|u|^{p+1}\right)
 -\kappa r^{-2\kappa-1}
             \int_{\Omega\cap\partial B_r}u^2,
 \end{split}
\end{equation*} and
 $F(t)=\mathcal W(u,e^{-t})$.

\begin{lemma}\label{t134:ni:lem:weiss-compactness}
The function $F$ is bounded and continuous for all sufficiently large
$t$.  For every sequence $t_j\to\infty$ there is a subsequence and a
global solution $V$ of \eqref{t134:ni:eq:flat-limit} such that, whenever
$s_j\to s\in\R$,
\begin{equation}\label{t134:ni:eq:translation-limit}
 F(t_j+s_j)\longrightarrow W(V,e^{-s}).
\end{equation}
Moreover,
\[
 \lim_{t\to\infty}F(t)=L\in\mathcal E.
\]
\end{lemma}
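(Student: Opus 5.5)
Boundedness of $F$ comes first. By Proposition~\ref{t134:ni:prop:upper}, $\int_{\Om\cap B_r}u^2\le Cr^{2\kappa+2}$ for small $r$. Consider the rescalings $v_r(X)=r^{-\kappa}u(rX)$ on $\Om_r$. They satisfy the rescaled Robin law with $\epsilon=\lambda r$, $\mu=1$ (since $1+\kappa(p-1)=0$), and $\rho\equiv1$, and $\int_{\Om_r\cap B_R}v_r^2\le CR^{2\kappa+2}$. The Caccioppoli bound \eqref{t134:ni:eq:caccioppoli} and the $L^\infty$ bound in Lemma~\ref{t134:ni:lem:compactness} then give uniform bounds on $\int_{\Om_r\cap B_R}|\nabla v_r|^2$, on $\int_{\partial\Om_r\cap B_R}|v_r|^{p+1}$ and on $\sup_{\Om_r\cap B_R}|v_r|$. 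Scaling gives $\mathcal W(u,r)=\mathcal W_1(v_r,1)$, the functional computed on $\Om_r$ at radius one. Each term is therefore bounded; the semicircle term is bounded by the $L^\infty$ bound.

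Continuity in $r$ follows because the volume and boundary integrals are absolutely continuous in $r$ and $u\in C(\overline\Om)$ makes $r\mapsto\int_{\Om\cap\partial B_r}u^2$ continuous. The arcs $\Om\cap\partial B_r$ vary continuously since the graph is $C^1$ and crosses each small circle exactly once on each side.

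For the compactness statement, take $t_j\to\infty$ and $r_j=e^{-t_j}$, and apply Lemma~\ref{t134:ni:lem:compactness} to $v_j=v_{r_j}$ with $\mu_j\equiv1$, $\rho_j\equiv1$, $\epsilon_j\to0$. Along a subsequence, $\widetilde v_j\to V$ strongly in $H^1(B_R^+)$ and locally uniformly up to the boundary for every $R$, and $V$ solves \eqref{t134:ni:eq:flat-limit} (case (i) with $\mu_0=1$). If $s_j\to s$, then $F(t_j+s_j)=\mathcal W_1(v_j,e^{-s_j})$. I will pass to the limit term by term.

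\begin{itemize}
\item The Dirichlet energies over $\Om_{r_j}\cap B_{e^{-s_j}}$ converge. This uses strong $H^1$ convergence, the fact that $\mathcal T_j\to\mathrm{id}$ in $C^1$, and the fact that the symmetric difference of the pulled-back balls $B_{e^{-s_j}}$ and $B_{e^{-s}}$ has vanishing measure; uniform integrability of $|\nabla\widetilde v_j|^2$ comes from the strong convergence.
\item The boundary $L^{p+1}$ terms converge by uniform convergence on the flat boundary together with arclength factors $\sqrt{1+\psi_j'^2}\to1$.
\item The semicircle terms converge by locally uniform convergence of $\widetilde v_j$ up to the boundary, composed with $\mathcal T_j^{-1}$, over arcs converging to $S^+_{e^{-s}}$.
\end{itemize}

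This gives \eqref{t134:ni:eq:translation-limit}. The step I expect to need the most care is the uniform convergence on the moving circles near the boundary endpoints. It should follow from equicontinuity of $\widetilde v_j$ up to the diameter, which is already provided by the reflected H\"older estimate in Lemma~\ref{t134:ni:lem:compactness}(i).

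For the existence of the limit, Lemma~\ref{t134:lem:flat-weiss} applies to the flat limit $V$ with $c=1$, so $W(V,\cdot)$ is nondecreasing. I do not have monotonicity of $F$ itself, since the boundary is only $C^1$. Instead I argue through cluster values. Let $L^+=\limsup F$ and $L^-=\liminf F$, both finite.

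First, every cluster value of $F$ is a Weiss value of some blow-up, and blow-ups should be homogeneous. Choose $t_j$ with $F(t_j)\to L^+$ and extract $V$ as above. Then $W(V,e^{-s})=\lim F(t_j+s)\le L^+$ for every $s$, with equality at $s=0$. Since $W(V,\cdot)$ is nondecreasing in the radius $e^{-s}$, it attains its maximum over $s\in\R$ at $s=0$. That is not yet enough for constancy, so I refine the choice of $t_j$. I pick $t_j$ to be near-maximizers of $F$ on windows $[T_j-K_j,T_j+K_j]$ with $K_j\to\infty$, using a sup-selection similar in spirit to Lemma~\ref{t134:ni:lem:scale-selection}. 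Then $W(V,\cdot)\le W(V,1)$ on all of $(0,\infty)$. Monotonicity makes $W(V,\cdot)$ constant for radii $\le1$. By unique continuation for the even-reflected analytic structure, or simply by rerunning the argument at the minimizing side as well, $V$ is $\kappa$-homogeneous. Hence $V\in\{0,P_\pm\}$ and $L^+\in\mathcal E$; symmetrically, $L^-\in\mathcal E$.

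Second, continuity of $F$ gives connectedness. If $L^-<L^+$, then $F$ takes every intermediate value along some sequence $t_j\to\infty$. A value strictly between $0$ and $W_*$ arises from a blow-up $V$ whose Weiss function satisfies $0\le W(V,\cdot)\le W_*$. Here I expect the main obstacle: I need to show that a non-homogeneous blow-up cannot sit at an intermediate level forever. I would first try the following. The two-sided limits $W(V,0^+)$ and $W(V,\infty)$ are Weiss values of the homogeneous blow-downs and blow-ups of $V$, hence lie in $\mathcal E$. The bound $W(V,\cdot)\le\limsup F$ along with the growth bound $\int_{B_R^+}V^2\le CR^{2\kappa+2}$, which is inherited from Proposition~\ref{t134:ni:prop:upper}, then forces $W(V,\cdot)$ to be constant. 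Thus $V$ is homogeneous, and its value lies in $\{0,W_*\}$, contradicting that the value is intermediate. Therefore $L^-=L^+=L\in\mathcal E$.
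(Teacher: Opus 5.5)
Your boundedness, continuity and compactness steps agree with the paper's. Your conclusion $\liminf F,\limsup F\in\mathcal E$ is also correct, but it needs no refined selection, and you have the direction reversed. If $F(t_j)\to L^+$, then $W(V,R)\le L^+=W(V,1)$ for \emph{every} $R>0$. Since $W(V,\cdot)$ is nondecreasing, it is constant on $[1,\infty)$ (not on $(0,1]$). Hence $V$ is $\kappa$-homogeneous by Lemma~\ref{t134:lem:flat-weiss}, already for the original sequence. For $L^-$ the same argument gives constancy on $(0,1]$.

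The genuine gap is the final step excluding $L^-=0<W_*=L^+$. Take a blow-up $V$ at an arbitrary time with $F(t_j)=c\in(0,W_*)$. All you know is that $W(V,\cdot)$ is nondecreasing with values in $[0,W_*]$, $W(V,1)=c$, $W(V,0^+)=0$ and $W(V,\infty)=W_*$. These facts are mutually consistent: they describe a profile passing from $0$ at small scales to $P_\pm$ at large scales. Neither $W(V,\cdot)\le\limsup F$ nor $\int_{B_R^+}V^2\le CR^{2\kappa+2}$ forces constancy, so your claim that they do is unsupported. Such a profile is exactly what Weiss monotonicity permits when $F$ passes \emph{downward} through level $c$ as $t$ increases. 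Times chosen without regard to direction therefore cannot yield a contradiction.

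The missing idea is to use the direction of monotonicity. $F$ is asymptotically nonincreasing in $t$, and oscillation requires upward passages that monotonicity forbids. The paper encodes this with \eqref{t134:ni:eq:one-sided} and the strict drop \eqref{t134:ni:eq:strict-drop} on compact intervals disjoint from $\mathcal E$. Together these trap $F(n)$ below $a+\eta$ once it has fallen below $a$.

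Within your framework you can repair the step by blowing up at the start of upward crossings. Fix $0<a<b<W_*$ and choose $\tau_j\to\infty$, $\tau_j'>\tau_j$ with $F(\tau_j)=a$, $F(\tau_j')=b$, and $a\le F\le b$ on $[\tau_j,\tau_j']$. Continuity and $\liminf F<a<b<\limsup F$ make this possible. Blow up along $\tau_j$; there are two cases.
\begin{itemize}
\item If $\tau_j'-\tau_j\to\sigma<\infty$ along a subsequence, \eqref{t134:ni:eq:translation-limit} gives $W(V,e^{-\sigma})=b>a=W(V,1)$ with $e^{-\sigma}\le1$, contradicting monotonicity.
\item If $\tau_j'-\tau_j\to\infty$, then $W(V,R)\in[a,b]$ for all $R\in(0,1]$. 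Hence $W(V,0^+)\in[a,b]$, contradicting your own observation that $W(V,0^+)\in\mathcal E$.
\end{itemize}
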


\begin{proof}
Proposition~\ref{t134:ni:prop:upper} and the local energy and
Moser estimates give
\[
 \int_{\Omega\cap B_r}|\nabla u|^2
 +\int_{\partial\Omega\cap B_r}|u|^{p+1}\le Cr^{2\kappa},
 \qquad\sup_{\Omega\cap B_r}|u|\le Cr^\kappa.
\]
Since the circular arc has length at most $2\pi r$, $F$ is bounded.
The radial function on either boundary branch is strictly monotone
near zero: its derivative has numerator $x+\psi\psi'$, of the
sign of $x$. Thus the integration endpoints vary continuously,
and $F$ is continuous.

Set $r_j=e^{-t_j}$ and $U_j=r_j^{-\kappa}u(r_j\,\cdot)$.
Lemma~\ref{t134:ni:lem:compactness} with $\mu_j=1$ gives
$U_j\circ T_{r_j}\to V$ strongly in local $H^1$ and locally
uniformly up to the diameter. By scaling,
\[
 F(t_j+s_j)=\mathcal W_{\Omega_{r_j}}(U_j,e^{-s_j}).
\]
For $e^{-s_j}\to R>0$, strong gradient convergence and convergence
of the pulled-back ball indicators give convergence of the bulk
energy. Uniform trace convergence, graph length densities tending
to one, and endpoints tending to $\pm R$ treat the boundary term.
On the circular arc, parameterization by
$e^{-s_j}(\cos\theta,\sin\theta)$ and uniform convergence give
the limit $\int_{S_R^+}V^2$; the endpoint discrepancies vanish.

It remains to prove convergence of $F$.
By \eqref{t134:ni:eq:translation-limit} and the flat Weiss monotonicity
formula,
\begin{equation}\label{t134:ni:eq:one-sided}
 \sup_{0\le s\le1}(F(t+s)-F(t))_+\longrightarrow0.
\end{equation}
Indeed, a violating sequence with $s_j\to s$ would contradict
$W(V,e^{-s})\le W(V,1)$. Similarly, for every compact interval
$[a,b]$ disjoint from $\mathcal E$, some $\delta_0>0$ and $T$
satisfy
\begin{equation}\label{t134:ni:eq:strict-drop}
 F(t+1)\le F(t)-\delta_0
 \quad(t\ge T,\ F(t)\in[a,b]).
\end{equation}
Otherwise, a limiting solution $V$ would satisfy
\[
 W(V,e^{-1})=W(V,1),\qquad W(V,1)\in[a,b].
\]
By Lemma~\ref{t134:lem:flat-weiss}, $V$ is $\kappa$-homogeneous;
hence \eqref{t134:ni:eq:critical-values} gives $W(V,1)\in\mathcal E$,
contrary to $[a,b]\cap\mathcal E=\varnothing$.

If $\liminf F(n)<\limsup F(n)$, choose $[a,b]$ strictly inside
this gap and disjoint from the finite set $\mathcal E$.
For any $0<\eta<b-a$, all sufficiently late positive increments
are at most $\eta$. Once $F(n)\le a$, induction using
\eqref{t134:ni:eq:strict-drop} confines subsequent values to
$(-\infty,a+\eta]$, a contradiction. Hence $F(n)\to L$.
A strict drop near $L$ would contradict convergence, so
$L\in\mathcal E$. For $n\le t\le n+1$,
\eqref{t134:ni:eq:one-sided} gives
$F(n+1)-o(1)\le F(t)\le F(n)+o(1)$ uniformly, proving the claim.
\end{proof}

\begin{proposition}\label{t134:ni:prop:nondegenerate}
Let
\[
 L:=\lim_{t\to\infty}F(t)
   =\lim_{r\rightarrow0}\mathcal W(u,r),
\]
whose existence follows from Lemma~\ref{t134:ni:lem:weiss-compactness}.
Then $L=W_*>0$, where $W_*$ is defined in
\eqref{t134:ni:eq:critical-values}.
In particular, there exist constants $0<c\leq C<\infty$ and
$r_*\in(0,R_0)$ such that
\[
 c r^{2\kappa+2}\leq\int_{\Omega\cap B_r}u^2
 \leq C r^{2\kappa+2}
 \qquad\text{for every }0<r<r_*.
\]
\end{proposition}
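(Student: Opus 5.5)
The plan is to exclude $L=0$. Lemma~\ref{t134:ni:lem:weiss-compactness} gives $L\in\mathcal E=\{0,W_*\}$. The upper bound on the $L^2$ mass is already Proposition~\ref{t134:ni:prop:upper}. Once $L=W_*$ is known, the lower bound will follow by the argument that rules out $L=0$.

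Suppose first that $L=0$. Since $\ord_0u=\kappa$ and $A(t)$ is bounded, there are two cases. If $\liminf_{t\to\infty}A(t)>0$, we already have the lower bound. Otherwise, choose $t_j\to\infty$ with $A(t_j)\to0$. I will show that blow-ups cannot converge to zero while $F\to0$ unless $A$ decays at a power rate, which would contradict $\ord_0u=\kappa$.

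The key step is to show that every blow-up limit is nonzero. Take any $t_j\to\infty$ and $U_j=r_j^{-\kappa}u(r_j\cdot)$ with $r_j=e^{-t_j}$. By Lemma~\ref{t134:ni:lem:weiss-compactness} with $\mu_j=1$, a subsequence converges to a global solution $V$ of \eqref{t134:ni:eq:flat-limit}. By \eqref{t134:ni:eq:translation-limit}, $W(V,R)=L$ for every $R>0$. Lemma~\ref{t134:lem:flat-weiss} then makes $V$ $\kappa$-homogeneous, so $V\in\{0,P_\pm\}$, and $W(V,1)=L$ identifies which. If $L=0$, every blow-up at scale $\kappa$ vanishes, that is, $A(t)\to0$.

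In that case I apply Lemma~\ref{t134:ni:lem:scale-selection}(ii). Its hypotheses hold by \eqref{t134:ni:eq:order-facts}. Fix $\delta$ as in \eqref{t134:ni:eq:delta}. We obtain $s_j\to\infty$ and $a_j=A(s_j)\to0$ with the envelope \eqref{t134:ni:eq:envelope}. Rescale as in the proof of Proposition~\ref{t134:ni:prop:upper}:
\[
 v_j(X)=\frac{u(r_jX)}{r_j^\kappa a_j},\qquad r_j=e^{-s_j}.
\]
Then $\int_{\Om_{r_j}\cap B_1}v_j^2=1$, and the envelope gives the window bound. The boundary parameter is now $\mu_j=a_j^{p-1}\to\infty$.

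Lemma~\ref{t134:ni:lem:compactness}(ii) yields a strong limit $V$, harmonic in $\HH$, with $V=0$ on $\R$, $\int_{B_1^+}V^2=1$ and the bound \eqref{t134:ni:eq:window}. Lemma~\ref{t134:ni:lem:harmonic-vanishing}, in its Dirichlet case, forces $V\equiv0$. This contradicts the normalization. Hence $A(t)\not\to0$, and so $L\ne0$ and $L=W_*$.

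For the lower bound, suppose $\liminf_{r\to0}\mathcal M(r)=0$. Take $t_j$ with $A(t_j)\to0$. The corresponding $\kappa$-blow-up has $\int_{B_1^+}V^2=0$, so $V=0$. But \eqref{t134:ni:eq:translation-limit} gives $W(V,1)=L=W_*>0$, a contradiction. Note that $\mu_j=1$ in this rescaling, so Lemma~\ref{t134:ni:lem:weiss-compactness} applies directly.

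This last paragraph is in fact the cleanest complete route. Any blow-up $V$ satisfies $W(V,1)=L$, so $L=W_*$ forces $V\ne0$, which gives the lower bound. Excluding $L=0$ is the step carried by the Dirichlet argument with $\mu_j\to\infty$: if $L=0$, every blow-up is zero, so $A\to0$, and part (ii) of Lemma~\ref{t134:ni:lem:scale-selection} produces a contradiction.

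The main obstacle is justifying the hypotheses of Lemma~\ref{t134:ni:lem:compactness}(ii) along the selected scales. Specifically, the local $L^2$ bounds on all balls $B_R$ must come uniformly from the envelope, and this is where $A(t)\le C_\epsilon e^{\epsilon t}$ and the choice of $\delta$ in \eqref{t134:ni:eq:delta} are essential.
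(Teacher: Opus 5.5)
Your proposal is correct and follows the paper's proof essentially step for step. Every natural blow-up has constant Weiss value $L$ and is therefore one of $0,P_\pm$, so $L=0$ forces $A(t)\to0$; Lemma~\ref{t134:ni:lem:scale-selection}\textup{(ii)}, the $\mu_j\to\infty$ case of Lemma~\ref{t134:ni:lem:compactness} and the Dirichlet case of Lemma~\ref{t134:ni:lem:harmonic-vanishing} then rule this out, and a zero blow-up along a sequence with $\mathcal M(r_j)\to0$ would contradict $W(V,1)=W_*>0$. The preliminary case split on $\liminf A$ in your second paragraph is superfluous, since the key step already shows that $L=0$ forces $A\to0$.
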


\begin{proof}
Every sequence of natural blow-ups has a locally uniform and strong
local $H^1$ cluster limit $V$ by Lemma~\ref{t134:ni:lem:compactness}.
Lemma~\ref{t134:ni:lem:weiss-compactness} gives
\[
 W(V,R)=L\qquad\text{for every }R>0.
\]
The flat Weiss identity forces $V$ to be $\kappa$-homogeneous.
Thus every cluster limit lies in $\{0,P_+,P_-\}$ and has Weiss
value $L$.

Suppose $L=0$.  The only homogeneous cluster limit with zero Weiss
value is zero.  Precompactness therefore gives convergence of the
full natural family to zero, including its $L^2$ norm on 
unit half-balls.  Hence $A(t)\to0$.

Fix $\delta$ satisfying \eqref{t134:ni:eq:delta}.
By \eqref{t134:ni:eq:order-facts} and
Lemma~\ref{t134:ni:lem:scale-selection}\textup{(ii)}, there exist
$s_j\to\infty$ and $a_j=A(s_j)\to0$ satisfying
\eqref{t134:ni:eq:envelope}. Set
\[
 r_j=e^{-s_j},\qquad
 v_j(X)=\frac{u(r_jX)}{r_j^\kappa a_j}.
\]
The same normalization and growth envelope as in the proof of
Proposition \ref{t134:ni:prop:upper} hold.  This time
$\mu_j=a_j^{p-1}\to\infty$.  Lemma \ref{t134:ni:lem:compactness} gives a
global zero-trace harmonic limit $V$ with
$\int_{B_1^+}V^2=1$ and the degree-window bound \eqref{t134:ni:eq:window}.
Lemma~\ref{t134:ni:lem:harmonic-vanishing} gives $V\equiv0$,
contradicting the normalization.

Thus $L=W_*>0$, and every cluster limit is $P_+$ or $P_-$.
If the asserted lower bound failed, a sequence with $\mathcal M(r_j)\to0$
would have a zero cluster limit, again a contradiction.
\end{proof}

\begin{proof}[\normalfont\bfseries Proof of Theorem \ref{t134:ni:thm:main}]

Consider the continuous function
\[
 b(r)=r^{-\kappa}u(r,\psi(r)),\qquad r>0.
\]
It has no zeros at arbitrarily small radii: a sequence of such zeros
would, by compactness, yield a cluster profile whose value at $(1,0)$
is zero, whereas $P_\pm(1,0)=\pm a_\kappa\neq0$.  Hence $b(r)$ has a
constant sign for all sufficiently small $r$.  That sign selects
exactly one of $P_+,P_-$, so every sequence has the same cluster
limit.  This proves the full convergence \eqref{t134:ni:eq:convergence}.

The volume convergence in Lemma \ref{t134:ni:lem:compactness} gives
\[
 \lim_{r\rightarrow0}r^{-2\kappa-2}
 \int_{\Omega\cap B_r}u^2=\int_{B_1^+}P_\sigma^2.
\]
Since
\[
 \int_0^\pi\cos^2(\kappa\theta-\beta)\dd\theta
 =\frac\pi2+\frac{\sin(2\beta)}{2\kappa},
\]
radial integration gives exactly \eqref{t134:ni:eq:density-limit}.

Finally $\psi(x)=o(|x|)$ uniformly near zero.  The inverse graph map
$T_r^{-1}$ tends uniformly to the identity on bounded sets.  The
homogeneous profiles extend continuously to any fixed slightly
larger sector, and the  domain near zero lies in such a
sector.  For $X\in\overline\Omega$ with $r=|X|$, apply
\eqref{t134:ni:eq:convergence} at $T_r^{-1}(X/r)$, which remains in a fixed
closed half-disk.  Uniform continuity of $P_\sigma$ on the relevant
bounded sector gives
\[
 r^{-\kappa}u(X)-P_\sigma(X/r)\longrightarrow0
\]
uniformly over all these $X$.  Homogeneity proves
\eqref{t134:ni:eq:physical}.
\end{proof}

\subsection{Integer critical order with a continuous positive weight}
\label{t134:int:sec:integer}

We exclude integer critical order for the flat problem with a
continuous positive boundary weight. The conformal reduction to
Theorem~\ref{thm:integer-critical-order} is given after the proof.

\begin{theorem}\label{t134:int:thm:weighted}
Let $N\ge2$ be an integer and set $p=1-N^{-1}$.
Let $R_0>0$, $\lambda\ge0$, and $\rho\in C(I_{R_0})$ satisfy
$0<c_0\le\rho\le c_1$ on $I_{R_0}$.
Suppose that $U\not\equiv0$ is real-valued and
\[
 U\in H^1(B_r^+)\cap C(\overline{B_r^+})
 \qquad\text{for every }0<r<R_0.
\]
Assume that $U$ satisfies
\[
 \begin{cases}
  \Delta U=0 &\text{in }B_{R_0}^+,\\
  -U_y=\rho(x)\bigl(\lambda U-|U|^{p-1}U\bigr)
   &\text{on }I_{R_0}.
 \end{cases}
\]
If $U(0)=0$, then
\[
 \ord_0U\ne N.
\]
\end{theorem}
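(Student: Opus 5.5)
The plan is to argue by contradiction: assume $\ord_0U=N=\kappa$. I will follow the blow-up scheme of Section~\ref{t134:sec:noninteger} as far as it goes, and replace the step that breaks down for integer $\kappa$ by an ODE analysis of the $N$-th Fourier coefficient of $U$.

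\emph{Step 1: homogeneous limits.} First I will check that for integer $\kappa=N$ the flat problem \eqref{t134:ni:eq:flat-limit} with constant weight $c=\rho(0)$ has no nonzero $N$-homogeneous solution. Write $V=r^N(a\cos N\theta+b\sin N\theta)$. Using $pN=N-1$, the boundary law on the ray $\theta=0$ reads $Nb=c|a|^{p-1}a$. On the ray $\theta=\pi$ it reads $-Nb=c|a|^{p-1}a$. Together these force $a=b=0$. So the set $\mathcal E$ of Weiss values collapses to $\{0\}$.

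\emph{Step 2: blow-up classification.} Set $A(t)=\mathcal M(e^{-t})$ in the flat coordinates and rescale with $\rho_j=\rho(r_j\,\cdot)\to\rho(0)$. I will use Lemma~\ref{t134:ni:lem:compactness}, Lemma~\ref{t134:ni:lem:scale-selection} and a weighted version of Lemma~\ref{t134:lem:flat-weiss}. In the weighted version, $\rho$ is frozen at $0$ and the error is $o(1)$ by continuity of $\rho$.

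If $A$ stays bounded above and below along a sequence, the cluster limits are global solutions with constant Weiss energy. By Step 1 they are zero, which contradicts the normalization; the argument is that of Lemma~\ref{t134:ni:lem:weiss-compactness} with $\mathcal E=\{0\}$. Hence $A(t)\to\infty$ or $A(t)\to0$ along envelope scales.

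Lemma~\ref{t134:ni:lem:harmonic-vanishing} now fails, because its degree window contains $N$. Instead, the window bound \eqref{t134:ni:eq:window} together with small $\delta$ forces the limit to be exactly homogeneous of degree $N$. When $\mu_j=a_j^{p-1}\to0$, the limit is Neumann and equals $\alpha\Rea z^N$. When $\mu_j\to\infty$, the limit is Dirichlet and equals $\alpha\Ima z^N$. Thus $U$ is, at suitable scales, close either to $a(r)\Rea z^N$ with $|a|\gg1$, or to $b(r)\Ima z^N$ with a tiny trace.

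\emph{Step 3: coefficient ODE.} Let
\[
 a(r)=\frac{2}{\pi}r^{-N}\int_0^\pi U(r,\theta)\cos N\theta\dd\theta,
 \qquad
 b(r)=\frac{2}{\pi}r^{-N}\int_0^\pi U(r,\theta)\sin N\theta\dd\theta .
\]
Green's identity against $\Rea z^{\pm N}$ and $\Ima z^{\pm N}$ gives exact formulas for $d a/dr$ and $d b/dr$ in terms of moments $\int_{I_r}g(t)t^{-N}\dd t$ and $\int_{I_r}g(t)t^{N}\dd t$ of the datum $g=\rho(\lambda U-|U|^{p-1}U)$.

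In the Neumann regime the trace is $U(\pm t,0)\approx a(\pm t)^N$. The critical relation $pN-N=-1$ makes the moment logarithmic. With $s=\log(1/r)$ this yields
\[
 \frac{da}{ds}=-C_N\rho(0)|a|^{p-1}a\,(1+o(1)).
\]
Because $p<1$, this sublinear restoring law drives $|a|^{1-p}$ to zero linearly in $s$. This contradicts the conclusion $A\to\infty$ along envelope scales, so that case is excluded.

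In the Dirichlet regime the flux balance $Nb\,t^{N-1}\approx\rho|U|^{p}$ fixes the trace size $(N|b|/\rho)^{1/p}t^{(N-1)/p}$. Feeding this back through the $\Rea$-moments should show that $b$ cannot decay sub-exponentially in $s$ while $a$ stays negligible. That contradicts $\ord_0U=N$ in the form of \eqref{t134:ni:eq:order-facts}.

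\emph{Main obstacle.} The hardest step is making Step 3 rigorous: the $(1+o(1))$ errors, the linear term $\lambda r$, the variation $\rho(x)-\rho(0)$, and the regimes must be controlled uniformly across scales where $U$ may pass between the Neumann and Dirichlet regimes. The example in Remark~\ref{rem:c1-integer-critical-example} shows that logarithmic corrections are exactly what survive when $\rho$ is allowed to oscillate. So continuity of $\rho$ at $0$ must enter precisely through the $o(1)$ in the ODE. I would first try to prove a two-sided differential inequality for $|a|^{1-p}$ in $s$, valid on every interval where the blow-ups are $\varepsilon$-close to the Neumann profile. I would then use Lemma~\ref{t134:ni:lem:scale-selection} to show that such intervals exhaust a neighbourhood of $s=\infty$.
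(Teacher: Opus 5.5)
Your architecture matches the paper's. You have the nonexistence of nonzero $N$-homogeneous solutions, the Neumann/Dirichlet dichotomy for blow-ups with the degree-$N$ window, and scalar ODEs for the $\cos N\theta$ and $\sin N\theta$ projections. Your Neumann law $da/ds\approx-\frac{2\rho_0}{\pi N}|a|^{p-1}a$ is exactly what the paper uses to prove that the density $A$ is bounded.

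Two repairs are needed in that half. First, the Weiss step cannot come before it. Constancy of the Weiss energy of cluster limits comes from convergence of the Weiss energy of $U$ at scale $e^{-t}$. That convergence needs $\int_{B_r^+}U^2\le Cr^{2N+2}$ at \emph{all} small scales, and a bound along a sequence is not enough. So $A\to0$ can only be derived after boundedness. Second, Neumann intervals cannot exhaust a neighbourhood of $s=\infty$: your own law brings $|a|$ down to $O(1)$ in finite time, which ends the regime.

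What works is a selection argument. First prove $A\le C_\delta(1+\mathcal E_\delta[a])$, where $\mathcal E_\delta[a](t)=\sup_s|a(s)|e^{-\delta|s-t|}$, by blowing up at maximizers of $A(s)e^{-\delta|s-t|}$. Then, if $|a|$ is unbounded, let $T_j$ be the first time $|a|$ reaches a large level, and let $t_j$ maximize $|a(s)|e^{-\delta(s-T_j)}$ over $s\ge T_j$. At $t_j$ one has $\sgn(a)a'\ge0$. But the Neumann blow-up normalized by $|a(t_j)|$ forces $\sgn(a)a'/|a|^p\to-\frac{2\rho_0}{\pi N}$, a contradiction.

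The genuine gap is the Dirichlet regime, where your proposed mechanism is wrong. Green's identity against $\Ima z^{\pm N}$ does not produce moments of $g$. Since $\Ima z^N=0$ and $\partial_y\Ima z^N=Nx^{N-1}$ on the diameter, it pairs with the trace $U(x,0)$. With $v_\pm(t)=e^{Nt}U(\pm e^{-t},0)$, the sine projection satisfies $b''-2Nb'=-\frac{2N}{\pi}(v_+-(-1)^Nv_-)$.

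The $\Rea$-moments, on the other hand, carry no information there. With $\rho_0|U|^{p-1}U\approx U_y\approx Nb\,x^{N-1}$, the two half-lines cancel in the forcing of the $a$-equation at leading order.

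The contradiction must come from the $b$-equation. Since $(N-1)/p=N$, one has $v_\pm\approx\sigma(N|b|/\rho_0)^{1/p}(\pm1)^{N-1}$. Both ends reinforce, and $b'/|b|^{1/p}\to\frac{2\sigma}{\pi}(N/\rho_0)^{1/p}$, so $|b|$ increases toward the origin. This is contradicted at suitably chosen scales: take the last crossing $T_j$ of a level of $|b|$, then maximize $|b(s)|e^{-\delta(T_j-s)}$ over $s\le T_j$. At the maximizer, $\sigma b'\le0$.

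Making this rigorous requires ingredients absent from your plan:
\begin{itemize}
\item The Dirichlet blow-up $\sigma\Ima z^N$ has zero trace, so your flux balance is invisible at the blow-up scale. It must be recovered at the finer scale $\mu_j^{-1/p}$.
\item A harmonic barrier gives $|w_j(\cdot,0)|\le C\mu_j^{-1/p}$.
\item A harmonic Dirichlet replacement, together with the monotonicity of $s\mapsto|s|^{p-1}s$, gives the strong convergence $\mu_j^{1/p}w_j(\cdot,0)\to\sgn(V_y)(|V_y|/\rho_0)^{1/p}$.
\item The same barrier, applied uniformly at all finer scales, is needed to control the tail of the integral formula for $b'$.
\item You also need the preliminary facts $b\not\equiv0$, $A\le C_\delta\mathcal E_\delta[b]$, and that $b$ does not decay exponentially. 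These ensure the selected scales really are Dirichlet blow-ups normalized by $|b|$.
\end{itemize}
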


\begin{proof}
Suppose, for a contradiction, that $\ord_0U=N$, and put
$\rho_0=\rho(0)$. We first show that the critical density is
bounded and tends to zero, and then use the boundary trace of a
Dirichlet blow-up to obtain a contradiction.

\par\medskip
\Needspace{4\baselineskip}
\phantomsection\label{t134:int:step:setup}
\noindent\textbf{Step 1: Critical density and scalar projections.}

Choose
$r_0<R_0/8$, set $t_0=-\log r_0$, and define
\[
A(t)=\left(e^{(2N+2)t}\int_{B_{e^{-t}}^+}U^2\right)^{1/2},
\qquad t\ge t_0.
\]
Then $A$ is continuous and strictly positive, and
\begin{align}
A(t)&\le C_\varepsilon e^{\varepsilon t}
&&\text{for every }\varepsilon>0,\label{t134:int:eq:subexp}\\
A(t)&\ne O(e^{-\varepsilon t})
&&\text{for every }\varepsilon>0.\label{t134:int:eq:notexp}
\end{align}
Write
\[
v(t,\theta)=e^{Nt}U(e^{-t+i\theta}),\qquad0\le\theta\le\pi,
\]
and let $v_+(t)=v(t,0)$, $v_-(t)=v(t,\pi)$,
$\rho_\pm(t)=\rho(\pm e^{-t})$.
The boundary Moser estimate, with a harmless change of constants,
gives
\begin{equation}\label{t134:int:eq:moser}
\|v(t,\cdot)\|_\infty\le C A(t-h),\qquad h=\log2,
\quad t\ge t_0+h.
\end{equation}
Define the real scalar projections
\[
a(t)=\frac2\pi\int_0^\pi v(t,\theta)\cos(N\theta)\dd\theta,
\qquad
b(t)=\frac2\pi\int_0^\pi v(t,\theta)\sin(N\theta)\dd\theta.
\]
Both are subexponentially bounded in the sense of
\eqref{t134:int:eq:subexp}. They belong to $C^2$ for $t>t_0$: initially the
identities below hold distributionally by Green's formula,
and their right-hand sides are continuous.
\begin{align*}
a''-2Na'&=S(t),\\
S(t)&=\frac2\pi\left[
\rho_+|v_+|^{p-1}v_++(-1)^N\rho_-|v_-|^{p-1}v_-
-\lambda e^{-t}\bigl(\rho_+v_++(-1)^N\rho_-v_-\bigr)
\right],\\
b''-2Nb'&=-\frac{2N}{\pi}
\bigl(v_+-(-1)^Nv_-\bigr).
\end{align*}
For example, the cylinder equation is
$v_{tt}-2Nv_t+v_{\theta\theta}+N^2v=0$. At its two ends,
\[
v_\theta(t,0)=\rho_+|v_+|^{p-1}v_+-\lambda e^{-t}\rho_+v_+,
\quad
v_\theta(t,\pi)=-\rho_-|v_-|^{p-1}v_-+\lambda e^{-t}\rho_-v_-.
\]
Integration against the sine or cosine gives the formulas above.

Subexponential growth removes the unstable homogeneous solution of
each scalar ODE. Consequently,
\begin{align}
a'(t)&=-\int_0^\infty e^{-2N\tau}S(t+\tau)\dd\tau,
\label{t134:int:eq:aint}\\
b'(t)&=\frac{2N}{\pi}\int_0^\infty e^{-2N\tau}
\bigl(v_+(t+\tau)-(-1)^Nv_-(t+\tau)\bigr)\dd\tau.
\label{t134:int:eq:bint}
\end{align}
Indeed, the forcing is subexponentially bounded by
\eqref{t134:int:eq:moser}; solving the first-order equation for $a'$ or $b'$
shows that a nonzero omitted constant would produce an
$e^{2Nt}$ term in the projection itself.

For a subexponentially bounded scalar function $c$, put
\[
\mathcal E_\delta[c](t)=
\sup_{s\ge t_0}|c(s)|e^{-\delta|s-t|},\qquad\delta>0.
\]
It is finite and obeys
\begin{equation}\label{t134:int:eq:elip}
\mathcal E_\delta[c](s)\le
e^{\delta|s-t|}\mathcal E_\delta[c](t).
\end{equation}
Similarly $A(s)e^{-\delta|s-t|}$ attains a positive maximum at some
$s=s(t)$. At a maximizer,
\begin{equation*}
A(\tau)\le A(s)e^{\delta|\tau-s|}
\qquad(\tau\ge t_0).
\end{equation*}

\par\medskip
\Needspace{4\baselineskip}
\phantomsection\label{t134:int:step:compactness}
\noindent\textbf{Step 2: Compactness and classification of rescaled limits.}

We use the following general rescaling template, specifying the scales
and normalizing amplitudes separately in each application.
For scales $r_j\to0$ and amplitudes $d_j>0$, set
\[
 w_j(z)=\frac{U(r_jz)}{r_j^Nd_j},\qquad
 \rho_j(x)=\rho(r_jx),\qquad
 \epsilon_j=\lambda r_j,\qquad \mu_j=d_j^{p-1}.
\]
The density controls their masses through
\[
 \int_{B_R^+}w_j^2
 =R^{2N+2}\frac{A(-\log r_j-\log R)^2}{d_j^2}
 \qquad\text{for each fixed $R>0$ and all large $j$}.
\]
Criticality gives
\[
 \Delta w_j=0,\qquad
 -(w_j)_y=\rho_j(x)
       \bigl(\epsilon_jw_j-\mu_j|w_j|^{p-1}w_j\bigr).
\]
The weights converge locally uniformly to $\rho_0=\rho(0)>0$.
Whenever the local $L^2$ masses are uniformly bounded,
Lemma~\ref{t134:ni:lem:compactness}, with $\psi_j\equiv0$,
gives uniform local $H^1$ and supremum bounds and strong local
$H^1$ compactness. If $\mu_j\to\mu_0<\infty$, the convergence
is locally uniform up to the boundary and the limit satisfies
$-W_y=-\mu_0\rho_0|W|^{p-1}W$. If $\mu_j\to\infty$, the limit
has zero trace and the boundary penalty tends to zero.
The same compactness statement applies to any sequence satisfying
these flat equations and bounds, including the constant-weight
rescalings used below. Strong $H^1$ convergence on a larger
half-disk also gives strong $L^2(S_1^+)$ convergence by the trace
theorem on a fixed half-annulus.

For later use, suppose an entire half-plane limit $W$ has either
zero Neumann data or zero trace and satisfies, for $0<\delta<1$,
\[
 \int_{B_R^+}W^2\le C R^{2N+2}e^{2\delta|\log R|}
 \qquad(R>0).
\]
Even or odd reflection gives an entire harmonic function. Its
orthogonal harmonic power expansion and the bounds at zero and
infinity permit only integer degrees in $[N-\delta,N+\delta]$.
Thus $W=c\operatorname{Re}z^N$ in the Neumann case and
$W=c\operatorname{Im}z^N$ in the Dirichlet case.

\par\medskip
\Needspace{4\baselineskip}
\phantomsection\label{t134:int:step:boundedness}
\noindent\textbf{Step 3: The critical density is bounded.}

Fix $0<\delta<1$. We first prove
\[
 A(t)\le C_\delta\bigl(1+\mathcal E_\delta[a](t)\bigr)
 \qquad(t\ge t_0).
\]
If this failed, the ratio would be unbounded along a sequence $\widehat t_j$.
Maximize
$A(s)e^{-\delta|s-\widehat t_j|}$ at $\xi_j$, and write $m_j=A(\xi_j)$.
Using \eqref{t134:int:eq:elip} and maximality gives
\[
\frac{m_j}{1+\mathcal E_\delta[a](\xi_j)}
\ge
\frac{A(\widehat t_j)}{1+\mathcal E_\delta[a](\widehat t_j)}\longrightarrow\infty.
\]
Thus $m_j\to\infty$, $\xi_j\to\infty$, and the density-normalized functions
\[
u_j(z)=\frac{U(e^{-\xi_j}z)}{e^{-N\xi_j}m_j}
\]
have unit $L^2(B_1^+)$ norm and the bilateral growth bound in
\hyperref[t134:int:step:compactness]{Step~2}. Their parameters satisfy
$\epsilon_j=\lambda e^{-\xi_j}\to0$ and $\mu_j=m_j^{p-1}\to0$.
The compactness and classification in \hyperref[t134:int:step:compactness]{Step~2} give a nonzero limit
$W=c\operatorname{Re}z^N$. But
$a(\xi_j)/m_j\to0$, whereas convergence of the trace on $S_1^+$ gives
$a(\xi_j)/m_j\to c\ne0$. This is a contradiction.

We now show that $A$ is bounded. Otherwise the preceding comparison
would force $|a|$ to be unbounded. Choose $M_j\uparrow\infty$,
$M_j>|a(t_0)|$. Let $T_j$ be the first time that $|a|=M_j$.
Maximize
\[
|a(s)|e^{-\delta(s-T_j)}\qquad(s\ge T_j)
\]
at $t_j$. The maximum exists by subexponential growth. Put
$c_j=|a(t_j)|\ge M_j$ and, after extraction,
$\varsigma=\sgn a(t_j)\in\{-1,1\}$. At an interior maximizer
$\varsigma a'(t_j)=\delta c_j>0$; if $t_j=T_j$, the first-hitting
property gives $\varsigma a'(t_j)\ge0$. Thus
\begin{equation}\label{t134:int:eq:apos}
\varsigma a'(t_j)\ge0.
\end{equation}
For every $s\le t_j$, one has $|a(s)|\le c_j$: before $T_j$ this
uses first hitting, and between $T_j$ and $t_j$ it uses maximality.
For $s\ge t_j$, maximality gives
$|a(s)|\le c_j e^{\delta(s-t_j)}$. Hence
\[
\mathcal E_\delta[a](s)\le c_j e^{\delta(s-t_j)_+},
\qquad
A(s)\le C c_j e^{\delta(s-t_j)_+}
\]
for large $j$, with $C$ independent of $j,s$.

Normalize by the cosine amplitude $c_j$, with $r_j=e^{-t_j}$:
$f_j(z)=U(r_jz)/(r_j^Nc_j)$. The resulting growth window has only
the degree $N$. By \hyperref[t134:int:step:compactness]{Step~2}, after extraction,
\[
f_j\longrightarrow\varsigma\operatorname{Re}z^N
\]
strongly in local $H^1$ and locally uniformly on closed half-disks.
The coefficient is exactly $\varsigma$ because the cosine projection
on $S_1^+$ equals $a(t_j)/c_j=\varsigma$.

For each fixed $\tau\ge0$, this gives
\[
\frac{v_+(t_j+\tau)}{c_j}\to\varsigma,
\qquad
\frac{v_-(t_j+\tau)}{c_j}\to(-1)^N\varsigma.
\]
Using \eqref{t134:int:eq:moser} and the scalar envelope, the nonlinear part
of $c_j^{-p}S(t_j+\tau)$ is bounded by $Ce^{p\delta\tau}$.
Its linear part is bounded by
$C e^{-t_j}c_j^{1-p}e^{(\delta-1)\tau}$.
The prefactor tends to zero because $a$ is subexponentially bounded.
Dominated convergence in \eqref{t134:int:eq:aint} therefore yields
\[
\frac{a'(t_j)}{c_j^p}
\longrightarrow
-\frac{4\rho_0\varsigma}{\pi}\int_0^\infty e^{-2N\tau}\dd\tau
=-\frac{2\rho_0\varsigma}{\pi N}.
\]
This contradicts \eqref{t134:int:eq:apos}, so $A$ is bounded.

\par\medskip
\Needspace{4\baselineskip}
\phantomsection\label{t134:int:step:density-zero}
\noindent\textbf{Step 4: The critical density tends to zero.}

By \hyperref[t134:int:step:boundedness]{Step~3}, the natural rescalings
$r^{-N}U(r\cdot)$ have volume bounds $CR^{2N+2}$ for every fixed
$R>0$, once $r$ is sufficiently small. Their parameters are
$\epsilon=\lambda r\to0$ and $\mu=1$. By \hyperref[t134:int:step:compactness]{Step~2},
every sequence $r\to0$ has a strong local $H^1$ subsequential limit
$Z$ satisfying
\[
 \Delta Z=0\quad\hbox{in }\R^2_+,\qquad
 -Z_y=-\rho_0|Z|^{p-1}Z\quad\hbox{on }\R,
 \qquad \int_{B_R^+}Z^2\le CR^{2N+2}\quad(R>0).
\]
We show directly that $Z=0$.

The constant-coefficient Weiss functional
\[
\mathcal W(Z,R)=R^{-2N}\left(\int_{B_R^+}|\nabla Z|^2+
\frac{2\rho_0}{p+1}\int_{I_R}|Z|^{p+1}\right)
-NR^{-2N-1}\int_{S_R^+}Z^2
\]
satisfies, by Lemma~\ref{t134:lem:flat-weiss},
\[
\frac{d}{dR}\mathcal W(Z,R)=
2R^{-2N}\int_{S_R^+}\left(\partial_rZ-\frac NRZ\right)^2\ge0.
\]
The stated volume bound, the cutoff energy estimate, and the trace
inequality show that $\mathcal W(Z,R)$ is bounded uniformly for all $R>0$.
Thus it has finite limits at zero and infinity.

Take any sequence $R_j\to0$, or $R_j\to\infty$, and rescale
$Z_j(z)=R_j^{-N}Z(R_jz)$. Criticality preserves the equation.
\hyperref[t134:int:step:compactness]{Step~2} gives local strong $H^1$ compactness. For any
$0<R_-<R_+$, the difference
$\mathcal W(Z,R_jR_+)-\mathcal W(Z,R_jR_-)$ tends to zero. The limit is therefore
$N$-homogeneous.

There is no nonzero homogeneous solution of this integer degree.
Indeed, write such a homogeneous limit as
$P=r^N(\alpha\cos N\theta+\beta\sin N\theta)$.
The two boundary equations imply
\[
N\beta=\rho_0|\alpha|^{p-1}\alpha,\qquad
(-1)^NN\beta=-(-1)^N\rho_0|\alpha|^{p-1}\alpha,
\]
so $\alpha=\beta=0$. It follows that the rescaled limits at both ends are
zero. Strong $H^1$ convergence, including the surface and flat trace
terms, gives
$\lim_{R\rightarrow0}\mathcal W(Z,R)=\lim_{R\uparrow\infty}\mathcal W(Z,R)=0$.
Monotonicity then forces $\mathcal W\equiv0$, and $Z$ is homogeneous and zero.

Every subsequential limit is therefore zero. The whole family
$r^{-N}U(r\cdot)$ converges to zero in local $H^1$, and its
$L^2(B_1^+)$ norm $A(-\log r)$ tends to zero.

\par\medskip
\Needspace{4\baselineskip}
\phantomsection\label{t134:int:step:sine-envelope}
\noindent\textbf{Step 5: Control by the sine projection.}

Fix $0<\delta<1$. We next prove that $b\not\equiv0$ and
\[
 A(t)\le C_\delta\mathcal E_\delta[b](t)\qquad(t\ge t_0),
\]
and that $b$ does not decay exponentially. Since $A\to0$ by
\hyperref[t134:int:step:density-zero]{Step~4} and
\eqref{t134:int:eq:notexp} holds, there are $T_j\to\infty$ with
$A(T_j)\ge e^{-\delta T_j/2}$. Maximize
$A(s)e^{-\delta|s-T_j|}$ at $\xi_j$. On every fixed compact
$[t_0,M]$ its maximum is $O(e^{-\delta(T_j-M)})=o(A(T_j))$.
Hence $\xi_j\to\infty$ and $m_j=A(\xi_j)\to0$. This gives a new
density-normalized sequence
$u_j(z)=U(e^{-\xi_j}z)/(e^{-N\xi_j}m_j)$ with unit
$L^2(B_1^+)$ norm and
the bilateral growth bound of \hyperref[t134:int:step:compactness]{Step~2}.
Since $\mu_j=m_j^{p-1}\to\infty$, that step gives a nonzero
limit $c\operatorname{Im}z^N$, proving
$b(\xi_j)/m_j\to c\ne0$.

If the displayed comparison failed, choose $\widehat t_j$ such that
$A(\widehat t_j)/\mathcal E_\delta[b](\widehat t_j)\to\infty$ and maximize
$A(s)e^{-\delta|s-\widehat t_j|}$ at $\xi_j$, as in
\hyperref[t134:int:step:boundedness]{Step~3}. For this new sequence, again put
$m_j=A(\xi_j)$ and $u_j(z)=U(e^{-\xi_j}z)/(e^{-N\xi_j}m_j)$. The ratio
$A(\xi_j)/\mathcal E_\delta[b](\xi_j)$ is at least its value at $\widehat t_j$.
Since $b\not\equiv0$, its envelope has a strictly positive lower
bound on every compact interval. Consequently $\xi_j\to\infty$,
$m_j\to0$, and the nonzero Dirichlet limit again contradicts
$b(\xi_j)/m_j\to0$.

Finally, if $|b(t)|\le Ce^{-\varepsilon t}$, choose
$0<\delta<\min(1,\varepsilon)$. Direct maximization gives
$\mathcal E_\delta[b](t)\le C_\delta e^{-\delta t}$, including the
fixed initial interval. The comparison just proved would imply exponential decay of
$A$, contrary to \eqref{t134:int:eq:notexp}.

\par\medskip
\Needspace{4\baselineskip}
\phantomsection\label{t134:int:step:scales}
\noindent\textbf{Step 6: Selection of a Dirichlet blow-up.}

By \hyperref[t134:int:step:density-zero]{Step~4} and \eqref{t134:int:eq:moser}, $b\to0$.
Fix $0<\delta<1$. The nonexponential decay established in
\hyperref[t134:int:step:sine-envelope]{Step~5} gives the following choice of scales.
There are $R_j\to\infty$ such that
$|b(R_j)|\ge e^{-\delta R_j/2}$. Put $M_j=|b(R_j)|>0$, and let
$T_j$ be the last time that $|b|=M_j$. It exists since $b\to0$ and
satisfies $T_j\ge R_j$. Maximize
\[
|b(s)|e^{-\delta(T_j-s)}\qquad(t_0\le s\le T_j)
\]
at $s_j$, and put $d_j=|b(s_j)|$.

The maximizing points escape to infinity. Indeed, on a fixed
$[t_0,M]$ the maximum is at most $C_Me^{-\delta(T_j-M)}$, whereas
the global maximum is at least
$M_j\ge e^{-\delta R_j/2}\ge e^{-\delta T_j/2}$.
Thus $s_j\to\infty$, $d_j\to0$, and, after extraction,
$\sigma=\sgn b(s_j)$ is fixed. At an interior maximizer,
$\sigma b'(s_j)=-\delta d_j<0$; if $s_j=T_j$, the last-crossing
property gives $\sigma b'(s_j)\le0$. Hence
\begin{equation}\label{t134:int:eq:bnegative}
\sigma b'(s_j)\le0.
\end{equation}

Maximality gives $|b(s)|\le d_je^{\delta(s_j-s)}$ for $s\le s_j$,
and $|b(s)|\le d_j$ for $s_j\le s\le T_j$. For $s\ge T_j$, the
last-crossing property gives $|b(s)|\le M_j\le d_j$. Therefore
\[
\mathcal E_\delta[b](s)\le d_j e^{\delta(s_j-s)_+},
\qquad A(s)\le C d_j e^{\delta(s_j-s)_+}.
\]
Normalize with $r_j=e^{-s_j}$:
\[
w_j(z)=\frac{U(r_jz)}{r_j^Nd_j},\qquad
\mu_j=d_j^{p-1}\to\infty,\qquad\epsilon_j=\lambda r_j\to0.
\]
After extraction, the growth bound,
\hyperref[t134:int:step:compactness]{Step~2}, and the sine projection give
\begin{equation}\label{t134:int:eq:dirlim}
w_j\longrightarrow\sigma\operatorname{Im}z^N
\quad\hbox{strongly in local }H^1.
\end{equation}
The normalization is again exact, since the sine projection on
$S_1^+$ equals $b(s_j)/d_j=\sigma$. We retain this sequence and
the notation $s_j,d_j,r_j,w_j$ throughout the remaining steps.

\par\medskip
\Needspace{4\baselineskip}
\phantomsection\label{t134:int:step:boundary-trace}
\noindent\textbf{Step 7: Recovery of the normalized boundary trace.}

The zero-trace limit obtained in \hyperref[t134:int:step:scales]{Step~6}
does not by itself yield a contradiction. We must identify the boundary
trace at a finer scale to determine the sign of $b'(s_j)$ through
\eqref{t134:int:eq:bint}.

Keep the rescalings chosen in \hyperref[t134:int:step:scales]{Step~6} and put
$V=\sigma\operatorname{Im}z^N$, $\rho_j(x)=\rho(r_jx)$,
and $\kappa_j=\mu_j^{-1/p}$. On a fixed half-disk $B_2^+$,
\begin{equation}\label{t134:int:eq:original}
 \partial_\nu w_j+\rho_j(x)
       \bigl(\mu_j |w_j|^{p-1}w_j-\epsilon_jw_j\bigr)=0.
\end{equation}
Here $\mu_j\to\infty$, $\epsilon_j\to0$,
$0<\rho_*\le\rho_j\le\rho^*$, and $\rho_j\to\rho_0>0$
locally uniformly. By \hyperref[t134:int:step:compactness]{Step~2} and
\eqref{t134:int:eq:dirlim}, the $w_j$ are uniformly locally bounded
up to the diameter and converge locally in $L^2$ to $V$,
which has zero trace and odd harmonic reflection.
We will prove
\begin{equation}\label{t134:int:eq:scaledbound}
 \sup_{x\in K}|w_j(x,0)|\le C_K\kappa_j
 \qquad(K\Subset I_2),
\end{equation}
and, for every compact interval $K\Subset I_2$ and $1\le\ell<\infty$,
\begin{equation}\label{t134:int:eq:strongtrace}
 z_j:=\kappa_j^{-1}w_j(\cdot,0)
 \longrightarrow
 g:=\sgn\bigl(V_y(\cdot,0)\bigr)
       \left(\frac{|V_y(\cdot,0)|}{\rho_0}\right)^{1/p}
 \quad\hbox{strongly in }L^\ell(K).
\end{equation}
The argument includes zeros of $V_y$.

\emph{(a) A uniform bound at the penalization scale.}
Fix a compact interval $K\Subset I_2$.  Choose $d>0$ so small that,
for every $x_0\in K$, the closure of $B_d^+(x_0,0)$ is contained in
a fixed relatively compact half-disk.  Let $M\ge1$ be a common
bound for $|w_j|$ on these half-disks, and set
\[
 \Psi_j(x,y)=M\left(\frac{2y}{d}
             +\frac{(x-x_0)^2-y^2}{d^2}\right)+c\kappa_j.
\]
This function is harmonic.  On the semicircle $S_d^+(x_0,0)$,
putting $s=y/d\in[0,1]$, its parenthesis equals
$1+2s-2s^2\ge1$, so $\Psi_j\ge M\ge|w_j|$.  On the flat diameter,
\[
 c\kappa_j\le \Psi_j\le M+c\kappa_j,
 \qquad \partial_\nu \Psi_j=-\frac{2M}{d}.
\]
Choose a fixed $c$ so large that
\[
 \rho_*c^p\ge\frac{2M}{d}
       +\rho^*\sup_j\epsilon_j\,(M+1)+1.
\]
For sufficiently large $j$, $c\kappa_j\le1$, and hence on the
diameter
\[
 \partial_\nu \Psi_j+
 \rho_j\bigl(\mu_j |\Psi_j|^{p-1}\Psi_j-\epsilon_j\Psi_j\bigr)\ge0.
\]
The map $s\mapsto\rho_j(\mu_j |s|^{p-1}s-\epsilon_js)$ is nondecreasing
on $[-M-1,M+1]$ for all sufficiently large $j$.  Indeed, on this
interval the increasing map $s\mapsto|s|^{p-1}s$ has monotonicity constant
$p(M+1)^{p-1}$, and
$\mu_j p(M+1)^{p-1}-\epsilon_j>0$. Applying Green's formula to
the difference with test function $(w_j-\Psi_j)_+$ gives
$w_j\le \Psi_j$ in the half-disk.
The same comparison applies to $-w_j$, since the boundary
nonlinearity is odd.  Evaluation at $(x_0,0)$ therefore gives
$|w_j(x_0,0)|\le c\kappa_j$, uniformly in $x_0\in K$.
This proves \eqref{t134:int:eq:scaledbound}.  Pointwise statements here may
be understood using the continuous local representative: for each
fixed $j$ the bounded Robin datum on an interior flat segment
gives continuity of the harmonic solution up to that segment.
This barrier estimate depends only on the local supremum bound,
the weight bounds, and an upper bound for $\epsilon_j$; it does
not require convergence to $V$.

\medskip
\emph{(b) Harmonic Dirichlet replacement.}
It suffices, after translation and restriction, to prove the result
on $I_r$ with $r<R<2$.  Choose $R$ so that the entire diameter
$I_R$ lies in an interval on which \eqref{t134:int:eq:scaledbound} holds.
Let $D_j$ denote the bounded harmonic Dirichlet solution in
$B_R^+$ with boundary values $0$ on the open diameter and $w_j$
on the open semicircle.  The boundary data may jump at the two
corners.  This causes no difficulty: the bounded Dirichlet
solution is defined by harmonic measure; the two corner points
have harmonic measure zero.  We use $D_j$ only away from those
corners, where it is smooth up to the flat boundary and has odd
harmonic reflection.

The difference $w_j-D_j$ is the bounded harmonic extension of
$w_j$ on $I_R$ and $0$ on the semicircle.  Consequently
\begin{equation}\label{t134:int:eq:ajbounded}
 H_j:=\kappa_j^{-1}(w_j-D_j)
 \quad\hbox{is harmonic and satisfies}\quad
 \|H_j\|_{L^\infty(B_R^+)}\le C.
\end{equation}
Its trace on $I_R$ is
$z_j=\kappa_j^{-1}w_j(\cdot,0)$.  In particular,
$D_j-V\to0$ in $L^2(B_R^+)$, because
$w_j\to V$ there and $|w_j-D_j|\le C\kappa_j$.
Odd reflection followed by interior harmonic estimates yields
\begin{equation}\label{t134:int:eq:qconvergence}
 q_j:=\partial_yD_j(\cdot,0)
 \longrightarrow q:=\partial_yV(\cdot,0)
 \quad\hbox{in }C^k(I_{R'})
 \quad\hbox{for every }R'<R\hbox{ and every }k.
\end{equation}

Subtracting the equation for $D_j$ from
\eqref{t134:int:eq:original}, and using $\mu_j\kappa_j^p=1$, gives
\begin{equation}\label{t134:int:eq:scaledweak}
 \kappa_j\int_{B_R^+}\nabla H_j\cdot\nabla\varphi
 +\int_{I_R}\rho_j |z_j|^{p-1}z_j\varphi
 -\epsilon_j\kappa_j\int_{I_R}\rho_j z_j\varphi
 =\int_{I_R}q_j\varphi
\end{equation}
for all $H^1$ test functions supported away from the semicircle.
All uses of $H^1$ here are local away from the corners;
$H_j\in H^1$ on the support of any such test function because
$w_j\in H^1$ and $D_j$ is smooth there.

\medskip
\emph{(c) Strong convergence of the boundary trace.}
Let $\eta$ be a smooth cutoff supported away from the semicircle,
equal to one on the desired compact subinterval of $I_R$.
On a slightly larger flat interval put
\[
 g=\sgn(q)(|q|/\rho_0)^{1/p},
 \qquad\rho_0 |g|^{p-1}g=q.
\]
Since $1/p>1$ and $q$ is smooth, $g$ is $C^1$, including at
zeros of $q$.  Choose an $H^1\cap L^\infty$ extension $G$ whose
trace agrees with $g$ on the flat portion of
$\operatorname{supp}\eta$; its gradient may be chosen bounded.
Use $\varphi=\eta^2(H_j-G)$ in \eqref{t134:int:eq:scaledweak}.
After expanding the gradient term, this gives
\begin{align*}
 &\int_{I_R}\eta^2\bigl(\rho_j |z_j|^{p-1}z_j-q_j\bigr)(z_j-g)\\
 &\quad=-\kappa_j\int_{B_R^+}\eta^2|\nabla H_j|^2
       +\kappa_j\int_{B_R^+}\eta^2\nabla H_j\cdot\nabla G
       -2\kappa_j\int_{B_R^+}\eta(H_j-G)
                           \nabla H_j\cdot\nabla\eta\\
 &\qquad\quad+\epsilon_j\kappa_j
                  \int_{I_R}\rho_j\eta^2z_j(z_j-g)
 \le C\kappa_j.
\end{align*}
The last inequality follows by Young's inequality, absorption
into the negative gradient-square term, and the uniform bound
\eqref{t134:int:eq:ajbounded}; $\epsilon_j$, $\rho_j$, $z_j$, $G$, and
$\nabla G$ are bounded on the fixed supports.  Therefore
\begin{align}\label{t134:int:eq:minty}
 &\int_{I_R}\eta^2\rho_j
            \bigl(|z_j|^{p-1}z_j-|g|^{p-1}g\bigr)(z_j-g)\notag\\
 &\qquad\le C\kappa_j+
       \int_{I_R}\eta^2\bigl(q_j-\rho_j|g|^{p-1}g\bigr)(z_j-g)
 \longrightarrow0.
\end{align}
Here we used \eqref{t134:int:eq:qconvergence} and local uniform
convergence of $\rho_j$.

Choose $M_1$ to bound $|z_j|$ and $|g|$ on the support.
For $s,t\in[-M_1,M_1]$, absolute continuity of $s\mapsto|s|^{p-1}s$ gives
\[
 (|s|^{p-1}s-|t|^{p-1}t)(s-t)
 \ge pM_1^{p-1}|s-t|^2.
\]
Together with $\rho_j\ge\rho_*$, \eqref{t134:int:eq:minty} implies
$z_j\to g$ strongly in $L^2$ on the compact interval where
$\eta=1$.  Uniform boundedness then implies convergence in
every finite $L^\ell$.  This proves \eqref{t134:int:eq:strongtrace}.

\par\medskip
\Needspace{4\baselineskip}
\phantomsection\label{t134:int:step:contradiction}
\noindent\textbf{Step 8: The final sign contradiction.}

For the boundary traces $z_j$ in \hyperref[t134:int:step:boundary-trace]{Step~7},
\eqref{t134:int:eq:strongtrace} and $V=\sigma\operatorname{Im}z^N$
give, on each compact annulus of the real axis,
\[
z_j(x)\longrightarrow
\sigma\left(\frac N{\rho_0}\right)^{1/p}
\sgn(x)^{N-1}|x|^N
\quad\hbox{in every finite }L^\ell.
\]
Here $(N-1)/p=N$ was used; for $x<0$ the factor
$\sgn(x)^{N-1}$ equals $(-1)^{N-1}$.
For every fixed $T>0$, changing variables $x=\pm e^{-\tau}$ gives
\begin{align*}
&\int_0^T e^{-2N\tau}
\frac{v_+(s_j+\tau)-(-1)^Nv_-(s_j+\tau)}{d_j^{1/p}}
\dd\tau\\
&\hspace{12mm}=\int_0^T e^{-N\tau}
\bigl[z_j(e^{-\tau})-(-1)^N z_j(-e^{-\tau})\bigr]\dd\tau\\
&\hspace{12mm}\longrightarrow
2\sigma\left(\frac N{\rho_0}\right)^{1/p}
\int_0^T e^{-2N\tau}\dd\tau.
\end{align*}
The power $d_j^{1/p}$ follows from
$d_j\mu_j^{-1/p}=d_j^{1/p}$.

To pass to $T=\infty$, a uniform tail bound is required. At each future time
$s\ge s_j$, rescale $U$ at $e^{-s}$ and divide by $d_j$.
The preceding envelope gives uniform volume and Moser bounds on a
fixed larger half-disk. The penalty parameter remains
$\mu_j=d_j^{p-1}$ and the linear parameter is at most $\lambda r_j$.
The barrier estimate in \hyperref[t134:int:step:boundary-trace]{Step~7}(a) thus gives, uniformly for
all $s\ge s_j$,
\[
|v_+(s)|+|v_-(s)|\le C d_j^{1/p}.
\]
Consequently the tail of the integral above is bounded by
$C\int_T^\infty e^{-2N\tau}\dd\tau$, independently of $j$.

Applying \eqref{t134:int:eq:bint} now gives
\[
\frac{b'(s_j)}{d_j^{1/p}}
\longrightarrow
\frac{2N}{\pi}\,2\sigma
\left(\frac N{\rho_0}\right)^{1/p}\frac1{2N}
=\frac{2\sigma}{\pi}\left(\frac N{\rho_0}\right)^{1/p}.
\]
The limit has sign $\sigma$, contradicting
\eqref{t134:int:eq:bnegative}. Hence $\ord_0U\ne N$.

\end{proof}

\begin{proof}[\normalfont\bfseries Proof of Theorem~\ref{thm:integer-critical-order}]
Set $N=\kappa\in\N$. Since $0<p<1$, we have $N\ge2$ and
$p=1-N^{-1}$. By Lemma~\ref{t12:lem:dmo-chart}, there is a local
conformal map $\Phi$ with
\[
 \Phi(0)=q,\qquad
 0<c\le|\Phi'(z)|\le C\qquad(z\in\overline{B_R^+}).
\]
Set $U=u\circ\Phi$. Lemma~\ref{lem:conformal-equation} shows that
$U$ has the local $H^1\cap C$ regularity and solves the
boundary problem in Theorem~\ref{t134:int:thm:weighted}, with the continuous
positive weight $\rho=|\Phi'|$. Moreover, $U(0)=0$ and
$U\not\equiv0$ by the assumptions on $u$.

Since $\Phi$ is bi-Lipschitz and its Jacobian $|\Phi'|^2$ is bounded
above and away from zero, there are positive constants
$c_1,c_2,C_1,C_2$ such that
\[
 c_2\int_{\Om\cap B_{c_1r}(q)}u^2
 \le\int_{B_r^+}U^2
 \le C_2\int_{\Om\cap B_{C_1r}(q)}u^2
\]
for all sufficiently small $r>0$. Thus $\ord_0U=\ord_qu$.
Theorem~\ref{t134:int:thm:weighted} gives $\ord_0U\ne N$, and hence
$\ord_qu\ne\kappa$.
\end{proof}

\section{Supercritical vanishing order}\label{sec:supercritical}

\subsection{Integer order under \texorpdfstring{$C^1$}{C1} regularity}

\begin{lemma}\label{t15:c1:lem:cone}
Let $0<p<1$, $\lambda\ge0$, and choose $a\in(1/2,1)$.
There exists $\eta_a>0$ with the following property. Suppose that, in
$B_R(0)$, the domain is above a Lipschitz graph $y=\psi(x)$, with
$\psi(0)=0$ and $\operatorname{Lip}\psi\le\eta_a$. Let $u$ be a
continuous, locally $H^1$ harmonic function satisfying
\[
 \partial_\nu u-\lambda u+|u|^{p-1}u=0
\]
on the graph portion of the boundary, in the sense of normal traces.
Suppose $|u|\le M$ in
$\overline\Omega\cap B_R(0)$. If $\lambda R$ and
$M^{1-p}/R$ are sufficiently small, then
\begin{equation*}
 |u(0)|\le C_a
 M^{1/[1-a(1-p)]}R^{-a/[1-a(1-p)]}.
\end{equation*}
All smallness thresholds and constants can be chosen uniformly for
graphs with the indicated Lipschitz bound.
\end{lemma}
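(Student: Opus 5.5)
The plan is a comparison argument with a harmonic barrier, modeled on the barrier in Step~7(a) of the proof of Theorem~\ref{t134:int:thm:weighted}. The one change is that the barrier is a shifted homogeneous cone function of degree $a<1$, not a quadratic polynomial. A degree-$a$ cone function stays positive on a sector of opening $\pi/a>\pi$, so it stays positive on the whole region above a graph with small Lipschitz constant. Shifting its vertex slightly below the boundary keeps its normal derivative bounded. Set $D:=1-a(1-p)\in(0,1)$. In polar coordinates centered at the vertex, let
\[
 h(Y)=|Y|^a\cos\bigl(a(\theta-\tfrac\pi2)\bigr).
\]
This function is harmonic and comparable to $|Y|^a$ on the sector $\{|\theta-\pi/2|<\pi/2+\varepsilon_a\}$, provided $a(\pi/2+\varepsilon_a)<\pi/2$. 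Choose $\eta_a$ so small that, for every $s_0>0$ with $s_0\ll R$, the set $\Om\cap B_R$ shifted by $(0,s_0)$ lies in this sector. The sector constraint is only needed near the graph, and there the vertical gap $s_0$ together with the slope bound $\eta_a$ suffices. The barrier is then
\[
 \Psi(X)=C_0MR^{-a}h\bigl(X+(0,s_0)\bigr).
\]

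First, fix $C_0$ so that $\Psi\ge M\ge|u|$ on $\Om\cap\partial B_R$. There $|X+(0,s_0)|\approx R$ and the angular factor is bounded below uniformly. Second, verify the boundary supersolution inequality on the graph. At a graph point with $|X|\approx s$, the point $Y=X+(0,s_0)$ satisfies $|Y|\approx s+s_0$. Hence
\[
 \Psi\ge cMR^{-a}(s+s_0)^a
 \quad\text{and}\quad
 |\partial_\nu\Psi|\le|\nabla\Psi|\le CMR^{-a}(s+s_0)^{a-1}.
\]
We need $\partial_\nu\Psi-\lambda\Psi+\Psi^p\ge0$. The linear term is handled by $\lambda\Psi\le C\lambda M\le\tfrac12\Psi^p$. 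This holds because $\Psi\le CM$ on $B_R$, and $M^{1-p}$ is small compared with $R$ while $\lambda R$ is small. It therefore suffices to have
\[
 \bigl(MR^{-a}(s+s_0)^a\bigr)^p\ge C\,MR^{-a}(s+s_0)^{a-1},
 \qquad\text{i.e.}\qquad
 (s+s_0)^{D}\ge C M^{1-p}R^{-a(1-p)}.
\]
The worst case is $s=0$, so I choose
\[
 s_0=\bigl(CM^{1-p}R^{-a(1-p)}\bigr)^{1/D}.
\]
The hypothesis that $M^{1-p}/R$ is small gives $s_0\ll R$, which is needed both for the sector inclusion and for the estimate on the arc.

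Third, compare. On $[-M-1,M+1]$ the map $t\mapsto -\lambda t+|t|^{p-1}t$ is nondecreasing: its monotonicity constant is at least $p(M+1)^{p-1}-\lambda>0$ once $\lambda$ is small relative to $M^{p-1}$, which follows from the smallness assumptions. Testing the difference of the weak formulations with $(u-\Psi)_+$, extended by zero near the arc where $u\le\Psi$, then gives $u\le\Psi$ in $\Om\cap B_R$. The boundary nonlinearity is odd, so the same argument gives $-u\le\Psi$. Evaluating at $0$ yields
\[
 |u(0)|\le\Psi(0)\approx MR^{-a}s_0^a
 =C_aM^{1+a(1-p)/D}R^{-a-a^2(1-p)/D}
 =C_aM^{1/D}R^{-a/D},
\]
which is the claimed bound. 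Every constant depends only on $a$, $p$ and $\eta_a$, so the estimate is uniform over the graph class.

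I expect the main obstacle to be the geometric step of choosing $\eta_a$ and the vertex shift together. The shifted domain must stay inside the positivity sector of $h$ with a uniform lower bound on the angular factor. This is needed both on the arc and on the graph, where the lower bound $\Psi\gtrsim MR^{-a}(s+s_0)^a$ is used. The difficulty sits near the origin, where the vertical gap between the graph and the vertex is only of size $s_0$. There the slope bound must keep the graph at angle at most $\eta_a$ from horizontal as seen from the vertex, which holds because $|\psi(x)|\le\eta_a|x|$. I would check this carefully by writing the angle of $(x,\psi(x)+s_0)$ explicitly. A secondary technical point is justifying the weak comparison with only $H^1\cap C$ regularity and normal traces. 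I would handle it by the same Green-formula argument used in Step~7(a), with the test function supported away from the arc.
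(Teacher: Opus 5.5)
Your proposal is correct and is essentially the paper's proof. Your barrier $|Y|^a\cos(a(\theta-\pi/2))$ with vertex at $(0,-s_0)$ is exactly the paper's $\operatorname{Re}(h+y-ix)^a$ with $h=s_0$. Your choice of $s_0$, the arc and flux estimates, and the evaluation $\Psi(0)\approx MR^{-a}s_0^a$ all coincide with the paper's.

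The one step to repair is the comparison. Your lower bound $p(M+1)^{p-1}-\lambda$ is negative whenever $\lambda\ge p$, which covers the later application where $M\to0$ with $\lambda$ fixed. There are two fixes:
\begin{enumerate}
\item Take monotonicity of $t\mapsto-\lambda t+|t|^{p-1}t$ on the actual range $[-C_1M,C_1M]$ of $u$ and $\Psi$, where $\Psi\le C_1M$. There the constant $p(C_1M)^{p-1}-\lambda$ is positive, because $\lambda M^{1-p}=(\lambda R)(M^{1-p}/R)$ is small.
\item Alternatively, do as the paper does. Use only the monotonicity of $|s|^{p-1}s$, and absorb $\lambda\int_{\partial\Om\cap B_R}w^2$, with $w=(u-\Psi)_+$, via the vertical trace inequality $\int_{\partial\Om\cap B_R}w^2\le 2R\sqrt{1+\eta_a^2}\int|\nabla w|^2$ and $\lambda R$ small.
\end{enumerate}
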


\begin{proof}
Choose $\eta_a>0$ so small that
\[
 a\bigl(\pi/2+\arctan\eta_a\bigr)<\pi/2.
\]
Set $D=1-a(1-p)>0$. For $0<h\le R$, use the principal branch to define
\[
 H_h(x,y)=\operatorname{Re}(h+y-ix)^a.
\]
Every point in $\overline\Omega\cap B_R$ satisfies $y\ge-\eta_a|x|$.
Consequently, uniformly in $h$,
\[
 |\arg(h+y-ix)|\le\pi/2+\arctan\eta_a,
 \qquad |h+y-ix|\ge c_a(h+|(x,y)|).
\]
The second estimate follows, for example, by homogeneity and
compactness on $h+|(x,y)|=1$, since $h+y-ix$ cannot vanish there.
Thus $H_h$ is harmonic on a neighborhood of the relevant closed
domain and
\begin{align*}
 c_a(h+|(x,y)|)^a&\le H_h(x,y)\le C_a(h+|(x,y)|)^a,\\
 |\nabla H_h(x,y)|&\le C_a h^{a-1}.
\end{align*}
The branch point $(0,-h)$ and the branch cut lie outside that domain.
Choose a fixed $C_0$ sufficiently large, and set
\[
 W=C_0MR^{-a}H_h.
\]
Then $W\ge M$ on the artificial circular boundary, $0<W\le C_1M$
throughout the closed domain, and on the graph
\[
 W\ge c_1M(h/R)^a,
 \qquad \partial_\nu W\ge-C_1MR^{-a}h^{a-1}.
\]
Take
\[
 h=R\left(K\frac{M^{1-p}}{R}\right)^{1/D},
\]
where $K$ is a sufficiently large fixed constant. The assumed
smallness makes $h\le R$. The identity
\[
 \frac{M^p(h/R)^{ap}}{MR^{-a}h^{a-1}}
 =M^{p-1}R^{a(1-p)}h^D=K
\]
shows, on increasing $K$, that $|W|^{p-1}W$ dominates twice the possible
negative normal derivative. Moreover,
$MR^{-a}h^{a-1}\ge M/R$, so the term $\lambda W$ is absorbed when
$\lambda R$ is sufficiently small. Hence
\[
 \partial_\nu W-\lambda W+|W|^{p-1}W\ge0.
\]

 If $w=(u-W)_+$, then $w$ has zero trace on the artificial
circle. Green's formula for $u-W$, the boundary inequality, and
monotonicity of $s\mapsto|s|^{p-1}s$ give
\[
 \int_{\Omega\cap B_R}|\nabla w|^2
 \le\lambda\int_{\partial\Omega\cap B_R}w^2.
\]
Vertical integration from each graph point to the upper circle gives
\[
 \int_{\partial\Omega\cap B_R}w^2
 \le 2R\sqrt{1+\eta_a^2}
       \int_{\Omega\cap B_R}|\nabla w|^2.
\]
Thus $u\le W$ for small $\lambda R$. Applying the same argument to
$-u$ gives $|u|\le W$. Continuity allows evaluation at the distinguished
graph point. Since $H_h(0,0)=h^a$,
\[
 |u(0)|\le C_0M(h/R)^a
 =C_aM^{1/D}R^{-a/D},
\]
as asserted.
\end{proof}

\begin{proposition}\label{t15:c1:prop:trace}
Suppose $\partial\Omega$ is $C^1$ near $q$, and let $u$ be a real
local solution satisfying
\eqref{paper:eq:local-problem}--\eqref{paper:eq:local-class}, with $u(q)=0$.
Assume that $k=\ord_q u$ is finite with $k>\kappa$.
Then for every
\begin{equation*}
 0<\beta<\frac{k-1}{p}
\end{equation*}
one has
\begin{equation*}
 |u(Q)|\le C_\beta |Q-q|^\beta
 \qquad(Q\in\partial\Omega,\ Q\to q).
\end{equation*}
\end{proposition}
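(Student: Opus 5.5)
The plan is to iterate the cone barrier of Lemma~\ref{t15:c1:lem:cone} at boundary points near $q$, using the bulk decay $\ord_q u=k$ as the input size $M$. Fix $Q\in\partial\Om$ close to $q$ and set $d=|Q-q|$. By Proposition~\ref{prop:physical}, for every $k'<k$ there is a constant with $|u(X)|\le C|X-q|^{k'}$ for $X\in\overline\Om$ near $q$. Since the boundary is $C^1$, after rotating to the tangent at $Q$ (which is close to the tangent at $q$), the boundary near $Q$ is a graph with Lipschitz constant at most $\eta_a$ on a ball $B_R(Q)$ of radius $R=d/2$. This holds uniformly in $Q$ once $d$ is small. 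On this ball we take $M=C d^{k'}$.

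\emph{Single application.} First we check the smallness conditions. We have $\lambda R=\lambda d/2\to0$, and
\[
M^{1-p}/R\asymp d^{k'(1-p)-1}\to0,
\]
because $k'>\kappa$ can be chosen, so that $k'(1-p)>1$. Lemma~\ref{t15:c1:lem:cone} then gives
\[
|u(Q)|\le C_a d^{\,(k'-a)/D},\qquad D=1-a(1-p).
\]
As $a\to1$ and $k'\to k$, the exponent tends to $(k-1)/p$. Since $(k'-a)/D$ is continuous in $(a,k')$ and the constants depend only on $a$ (and the graph bound), every $\beta<(k-1)/p$ is reached by choosing $a$ close enough to $1$ and $k'$ close enough to $k$. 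Note that $a<1$ is required, and $\eta_a$ shrinks as $a\to1$. This is harmless, because the $C^1$ boundary makes the local Lipschitz constant arbitrarily small after restricting to a small enough neighborhood of $q$.

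\emph{Main obstacle: uniformity.} The key points to verify are that, for all $Q$ sufficiently near $q$:
\begin{itemize}
\item the rotated graph representation at $Q$ exists on $B_{d/2}(Q)$ with $\psi(Q)=0$ and Lipschitz bound $\eta_a$;
\item the barrier comparison is valid on $\Om\cap B_{d/2}(Q)$;
\item the sup bound $M$ holds on the whole ball.
\end{itemize}
The sup bound is immediate, since every point of $B_{d/2}(Q)$ lies within $3d/2$ of $q$. The lemma's constants are uniform for graphs with the indicated Lipschitz bound, so the only real check is the graph structure. That structure follows from uniform continuity of the unit tangent on the $C^1$ arc: rotate so the tangent at $Q$ is horizontal, and note that the graph function is $C^1$ with derivative bounded by the oscillation of the tangent on the arc. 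No iteration beyond this single step appears necessary.
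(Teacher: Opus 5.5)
Your proposal is correct and is essentially the paper's proof. The paper likewise makes a single application of Lemma~\ref{t15:c1:lem:cone} on $B_{c|Q-q|}(Q)$ with $M\asymp|Q-q|^{s}$, $s\in(\kappa,k)$, obtaining the exponent $(s-a)/(1-a(1-p))\to(k-1)/p$.

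The only difference is cosmetic: you rotate to the tangent at $Q$, while the paper keeps the $q$-normalized graph and simply translates to $Q$. Either works, because the lemma needs only $\psi(0)=0$ and the Lipschitz bound $\eta_a$.
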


\begin{proof}
By Proposition~\ref{t12:lem:physical}, every $s<k$ gives
$|u(X)|\le C_s|X-q|^s$ on the closed domain near $q$.
For $s>\kappa$ and $a<1$, put
\[
 B_a(s)=\frac{s-a}{1-a(1-p)}.
\]
Since
\[
 B_a(s)-s
 =\frac{a((1-p)s-1)}{1-a(1-p)}>0,
 \qquad \lim_{a\uparrow1,\ s\uparrow k}B_a(s)=\frac{k-1}{p},
\]
we may choose $s\in(\kappa,k)$ and $a\in(1/2,1)$ with
$B_a(s)>\beta$. Rotate the tangent at $q$ to be horizontal.
After restriction, the $C^1$ graph has Lipschitz constant at most
$\eta_a$. The same graph bound holds after translation to any nearby
boundary point $Q$. Apply Lemma~\ref{t15:c1:lem:cone} in $B_R(Q)$ with
$R=c|Q-q|$, for a fixed small $c>0$. On this ball,
$|u|\le C_sR^s=:M$. The condition
$M^{1-p}/R\to0$ follows from $s>\kappa$. The barrier gives
$|u(Q)|\le C R^{B_a(s)}$, which implies the required bound.
\end{proof}

\begin{lemma}\label{t15:lem:dirichlet}
Let $V$ be harmonic, locally $H^1$, and continuous on a closed
upper half-disk. Suppose that its trace satisfies
$|V(x,0)|\leq C|x|^\beta$, where $\beta>n$ and $n\geq0$ is an
integer. Then there are real coefficients $c_1,\ldots,c_n$ such that
\begin{equation*}
 V(z)=\sum_{j=1}^n c_j\operatorname{Im}z^j+O(|z|^\gamma)
 \qquad\bigl(\gamma<\min\{\beta,n+1\}\bigr)
\end{equation*}
uniformly up to the diameter. The sharper remainder is $O(r^\beta)$
if $\beta<n+1$, $O(r^{n+1}\log(1/r))$ if $\beta=n+1$, and
$O(r^{n+1})$ if $\beta>n+1$.
\end{lemma}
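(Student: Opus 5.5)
The plan is to mirror the proof of Lemma~\ref{t12:lem:potential}, with the Poisson integral of the Dirichlet data playing the role that the logarithmic potential played there. Choose a cutoff $\chi\in C_c^\infty(-R,R)$ equal to one near zero, set $f=\chi V(\cdot,0)$ extended by zero, and let
\[
 P(z)=\frac{y}{\pi}\int_\R\frac{f(t)}{(x-t)^2+y^2}\dd t
 =\frac1\pi\int_\R f(t)\,\Ima\frac{-1}{z-t}\dd t,
\]
the bounded harmonic extension of $f$. Since $f$ is continuous and compactly supported, $P$ is continuous up to the diameter and has trace $f$. Then $V-P$ is harmonic and continuous, with zero trace near the origin. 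By odd reflection it extends harmonically across the diameter, so it has a convergent expansion $\sum_{j\ge1}e_j\Ima(z^j)$. Truncating after $j=n$ leaves an error $O(|z|^{n+1})$, uniformly on the closed half-disk.

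Next we expand $P$. The kernel has the expansion
\[
 \Ima\frac{-1}{z-t}=\Ima\sum_{j\ge0}\frac{z^j}{t^{j+1}}
 \qquad(|z|<|t|).
\]
Since $|f(t)|\le C|t|^\beta$ with $\beta>n$, the moments $b_j=\frac1\pi\int f(t)t^{-j-1}\dd t$ converge absolutely for $1\le j\le n$. The $j=0$ term is $\Ima 1=0$. Subtract $\sum_{j=1}^n b_j\Ima(z^j)$ from $P$, put $r=|z|$, and split the integral at $|t|=2r$.

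\emph{Far part.} Here the Taylor remainder of the kernel is bounded by $C r^{n+1}|t|^{-n-2}$. This gives a contribution
\[
 C r^{n+1}\int_{2r<|t|<R}|t|^{\beta-n-2}\dd t .
\]
The integral is $O(1)$ if $\beta>n+1$, $O(\log(1/r))$ if $\beta=n+1$, and $O(r^{\beta-n-1})$ if $\beta<n+1$. These are exactly the three sharper remainders in the statement.

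\emph{Near part.} This is where most of the care is needed. We bound $P$ itself restricted to $|t|<2r$, and the subtracted truncated moments over $|t|<2r$, separately.
\begin{enumerate}
\item For the moments over $|t|<2r$: each term is $r^j\int_{|t|<2r}|t|^{\beta-j-1}\dd t=O(r^\beta)$, since $\beta-j>0$.
\item For the Poisson part over $|t|<2r$: we cannot use the kernel pointwise, because it is singular as $y\to0$. Instead we use that the Poisson kernel is a probability density. If $y\ge r/2$, we bound $|f|\le Cr^\beta$ on the window.
\item If $y<r/2$, then $|x|\ge \tfrac{\sqrt3}{2}r$. Points $t$ within $r/4$ of $x$ satisfy $|f(t)|\le Cr^\beta$. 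The remaining $t$ with $|t|<2r$ have kernel bounded by $Cy/r^2$, which contributes $C y r^{\beta-1}\le Cr^\beta$.
\end{enumerate}
In all cases the near part is $O(r^\beta)$, uniformly up to the diameter.

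Finally, combine the two pieces: the near part is $O(r^\beta)$ and the far part is one of the three stated bounds. Setting $c_j=b_j+e_j$ gives the expansion, with the sharper remainders as listed. The general bound $O(|z|^\gamma)$ for every $\gamma<\min\{\beta,n+1\}$ follows immediately, since $r^{n+1}\log(1/r)=O(r^\gamma)$. The main obstacle is the uniformity of the near-part estimate as $y\to0$; the case split in item~(2)--(3) is intended to handle it.
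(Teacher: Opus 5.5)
Your proposal is correct and follows essentially the same route as the paper. Both arguments take the Poisson extension of the cut-off trace, oddly reflect $V-P$, subtract the moments $b_j$ with a split at $|t|=2r$, bound the near part by the unit mass of the Poisson kernel, and bound the far part by the Taylor remainder of $(t-z)^{-1}$, with the zeroth moment contributing no imaginary part. The one inessential difference is that your case split in items (2)--(3) is not needed: $|f|\le C(2r)^\beta$ on the whole window $|t|<2r$ and the Poisson kernel has unit mass for every $y>0$, which is exactly how the paper disposes of the near part.
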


\begin{proof}
Write $h(x)=V(x,0)$ and choose a smooth cutoff $\chi$ equal to one
near zero. The Poisson extension
\[
 W(z)=\frac1\pi\operatorname{Im}
                 \int_\R\frac{\chi(t)h(t)}{t-z}\dd t
\]
belongs locally to $H^1$, because $\chi h\in H^{1/2}(\R)$.
The difference $V-W$ has zero trace near zero. Its odd reflection
is analytic and has a Taylor expansion in $\operatorname{Im}z^j$.

For $1\leq j\leq n$, the moments
\[
 a_j=\frac1\pi\int\chi(t)h(t)t^{-j-1}\dd t
\]
are absolutely convergent. Put $r=|z|$. The contribution of
$|t|\leq2r$ to the Poisson integral is $O(r^\beta)$, by positivity
and unit mass of the Poisson kernel, and the omitted portions of
these moments have the same bound after multiplication by $z^j$.
For $|t|>2r$, expand $(t-z)^{-1}$ through degree $n$. The
remainder is bounded by
\[
 C r^{n+1}\int_{2r}^{R_1}t^{\beta-n-2}\dd t,
\]
which gives the three asserted estimates. The zeroth complex
moment is real and contributes no imaginary part. Adding the
analytic odd-reflection remainder proves the statement, including
the closed-boundary uniformity.
\end{proof}

\begin{proof}[\normalfont\bfseries Proof of Theorem~\ref{t15:thm:main}\textup{(i)}]
Put $\rho(t)=|\Phi'(t)|$.
Proposition~\ref{t15:local-sucp} gives $m<\infty$.
Proposition~\ref{prop:physical} and Lemma~\ref{t12:lem:c1-chart}
give $\ord^{\infty}_0(U)=m$, $|\Phi(t)-q|\le C_\alpha|t|^\alpha$
for every $\alpha<1$, and $\rho\in L^Q$ locally for every finite
$Q>1$. The pullback boundary equation, in the sense of normal traces, is
$-U_y=\rho(\lambda U-|U|^{p-1}U)$.
Combining the estimate for the conformal map with Proposition~\ref{t15:c1:prop:trace}
gives
\begin{equation*}
 |U(t,0)|\le C_\beta|t|^\beta
 \quad\text{for every }\beta<(m-1)/p.
\end{equation*}
Indeed, choose $\beta<\beta'<(m-1)/p$ and $\alpha<1$ with
$\alpha\beta'>\beta$.
Take $m<\beta<(m-1)/p$ and $n=\lfloor m\rfloor$.
Lemma~\ref{t15:lem:dirichlet} gives
\[
 U(z)=\sum_{j=1}^n c_j\Ima z^j+O(|z|^\gamma),
 \qquad m<\gamma<\min\{\beta,n+1\}.
\]
The least nonzero degree equals the pointwise vanishing order, so
$m\in\N$ and $A=c_m\ne0$. Put $\ell=(m-1)/p>m$.
For $0<\delta<\min\{1,\ell-m\}$, applying the lemma again with
$n=m$ and $m+\delta<\beta<\ell$ proves
\eqref{t15:eq:conformal-expansion} and uniqueness of $A$.

Set $v_r=r^{-m}U(r\,\cdot)$, $P=A\Ima z^m$,
$w_r=v_r-P$, and $\tau_r=r^{1-m(1-p)}\to\infty$.
Since $P=0$ on $\R$, testing the difference equation with $\eta^2w_r$
gives
\begin{align*}
 \tfrac12\int\eta^2|\nabla w_r|^2
 +\tau_r\int\rho(rx)\eta^2|v_r|^{p+1}
 &\le C\int|w_r|^2|\nabla\eta|^2
 +\lambda r\int\rho(rx)\eta^2v_r^2\\
 &\quad+\left|\int(\partial_\nu P)\eta^2v_r\right|.
\end{align*}
The first and last terms vanish by uniform convergence on closed
half-disks. The linear term vanishes because $v_r$ is bounded and
$r\int_{-L}^L\rho(rx)\dd x=\int_{-Lr}^{Lr}\rho(t)\dd t\to0$.
Thus the convergence is strong in local $H^1$, and
\begin{equation}\label{t15:eq:weighted-penalty}
 \tau_r\int_{I_L}\rho(rx)|v_r(x,0)|^{p+1}\dd x\longrightarrow0.
\end{equation}
The physical trace bound follows from
Proposition~\ref{t15:c1:prop:trace} with $k=m$.

The Neumann trace in $H^{-1/2}$ is continuous under local $H^1$
convergence of harmonic functions, as follows by testing against
$H^1$ extensions of compactly supported $H^{1/2}$ boundary data.
Therefore $(v_r)_y\to mAx^{m-1}$ in local $H^{-1/2}$.
Moreover, for fixed finite $Q>1$ and compact $K$,
\[
 \|\lambda r\rho(r\,\cdot)v_r\|_{L^Q(K)}
 \le C\lambda r^{1-1/Q}\|\rho\|_{L^Q(rK)}\to0.
\]
The boundary equation gives
\begin{equation}\label{t15:eq:weak-flux}
 r^{1-m}\rho(r\,\cdot)|U(r\,\cdot,0)|^{p-1}U(r\,\cdot,0)
 \longrightarrow mA(\,\cdot\,)^{m-1}
 \quad\text{strongly in }H^{-1/2}_{\rm loc}(\R).
\end{equation}

The trace bound and the local graph length estimate imply
$\ell\le\ord_q^{\partial,\infty} u\le\ord_q^\partial u$.
If the last order exceeded $\ell$, some $\beta>\ell$ would satisfy
$\int_{\partial\Omega\cap B_R(q)}u^2\dd S\le CR^{2\beta+1}$.
For nonzero $0\le\varphi\in C_c^\infty((1,2))$,
\eqref{t15:eq:weak-flux} yields
\[
 r^{-m}\int_r^{2r}\rho(t)|U(t,0)|^{p-1}U(t,0)\varphi(t/r)\dd t
 \longrightarrow mA\int_1^2x^{m-1}\varphi(x)\dd x\ne0.
\]
Choose $\alpha<1$ with $\alpha(1+p\beta)>m$.
Then $\Gamma_r=\Phi([r,2r])\subset B_{C_\alpha r^\alpha}(q)$
and $|\Gamma_r|\le Cr^\alpha$. Since $\rho(t)\dd t=\dd S$,
H\"older's inequality gives the contradiction
\[
 cr^m\le C\int_{\Gamma_r}|u|^p\dd S
 \le C|\Gamma_r|^{1-p/2}
             \left(\int_{\Gamma_r}u^2\dd S\right)^{p/2}
 \le Cr^{\alpha(1+p\beta)}.
\]
Hence both boundary vanishing orders equal $\ell$.
\end{proof}

\subsection{The exact finer  asymptotic expansions}

\begin{lemma}\label{t15:w:comparison}
Let $\alpha,b\in L^\infty(I_R(x_0))$, $0\leq\alpha\leq\alpha^*$,
and $b\geq0$. For
\[
 \mathcal Bw=\partial_\nu w-\alpha(x)w+b(x)|w|^{p-1}w,
\]
a harmonic subsolution lies below a harmonic supersolution whenever
it does so on the artificial semicircle and $\alpha^*R<1$.
\end{lemma}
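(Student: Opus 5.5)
Let $w_-$ be the harmonic subsolution and $w_+$ the harmonic supersolution on $\Omega_R:=B_R^+(x_0)$, so that $\mathcal Bw_-\le0\le\mathcal Bw_+$ weakly on the flat part $I_R(x_0)$ and $w_-\le w_+$ on the semicircle. I will argue by the same energy method used in the barrier step of Lemma~\ref{t15:c1:lem:cone} and in Step~7(a) of Theorem~\ref{t134:int:thm:weighted}. Set $w:=(w_--w_+)_+$. By hypothesis its trace on the semicircle vanishes, so its zero extension is an admissible test function supported away from the artificial boundary, and it lies in $H^1$ with $\nabla w=\ind_{\{w_->w_+\}}\nabla(w_--w_+)$.

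In the weak form, the inequalities say that for every nonnegative admissible $\zeta$,
\[
\int\nabla w_-\cdot\nabla\zeta\le\int_{I_R}\bigl(\alpha w_--b|w_-|^{p-1}w_-\bigr)\zeta,\qquad
\int\nabla w_+\cdot\nabla\zeta\ge\int_{I_R}\bigl(\alpha w_+-b|w_+|^{p-1}w_+\bigr)\zeta .
\]
Subtract the second from the first and take $\zeta=w$. This gives
\[
\int_{\Omega_R}|\nabla w|^2\le\int_{I_R}\alpha\,(w_--w_+)w-\int_{I_R}b\bigl(|w_-|^{p-1}w_--|w_+|^{p-1}w_+\bigr)w .
\]
The nonlinear integrand is nonnegative, because $s\mapsto|s|^{p-1}s$ is nondecreasing, $b\ge0$, and $w$ is supported where $w_->w_+$. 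Hence
\[
\int_{\Omega_R}|\nabla w|^2\le\alpha^*\int_{I_R}w^2 .
\]

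Next I need a trace inequality with the correct scaling. Since $w$ vanishes on the semicircle, integrate vertically from $(x,0)$ to the point $(x,\sqrt{R^2-(x-x_0)^2})$ on the semicircle and apply Cauchy--Schwarz:
\[
w(x,0)^2\le R\int_0^{\sqrt{R^2-(x-x_0)^2}}|\partial_yw|^2\,dy .
\]
Integrating in $x$ gives $\int_{I_R}w^2\le R\int|\nabla w|^2$. Combining the two estimates,
\[
(1-\alpha^*R)\int|\nabla w|^2\le0 .
\]
Since $\alpha^*R<1$, it follows that $\nabla w=0$. Therefore $w$ is constant on the connected half-disk, and its zero trace on the semicircle forces $w\equiv0$, that is, $w_-\le w_+$.

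The main obstacle is technical. The vertical-integration trace estimate requires $w$ to be absolutely continuous on a.e.\ vertical line with zero value at the top. This holds for $H^1$ functions with zero trace on the semicircle, by approximation with smooth functions vanishing near the arc. The second issue is admissibility of $w$ as a test function, which is needed because the normal derivative is only a distribution (a normal trace). This follows from Green's formula in the normal-trace sense, as in Lemma~\ref{lem:conformal-equation}, once the boundary terms are integrable. Integrability holds since the nonlinear term is bounded: the sub- and supersolutions are assumed bounded and continuous up to the flat boundary, as in all the applications.
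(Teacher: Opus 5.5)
Your proof is correct and is essentially the paper's argument: you test the difference of the weak inequalities with $w=(w_--w_+)_+$, discard the nonlinear boundary term using monotonicity of $s\mapsto|s|^{p-1}s$ and $b\ge0$, and close with the vertical Cauchy--Schwarz bound $\int_{I_R}w^2\le R\int|\nabla w|^2$ to obtain $(1-\alpha^*R)\int|\nabla w|^2\le0$. Your remarks on admissibility of $w$ as a test function are reasonable details that the paper leaves implicit: zero trace on the arc, approximation by functions vanishing near the arc, and boundedness of the sub- and supersolutions as in the applications.
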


\begin{proof}
The positive part $w$ of the subsolution minus the supersolution
has zero trace on the semicircle. Applying Green's formula to
the difference, with $w$ as the test function, and using the
boundary inequalities gives
\[
 \int|\nabla w|^2\leq\alpha^*\int_{I_R(x_0)}w^2.
\]
For almost every vertical segment ending on the semicircle,
Cauchy--Schwarz gives
\[
 |w(x,0)|^2\leq\sqrt{R^2-(x-x_0)^2}
       \int_0^{\sqrt{R^2-(x-x_0)^2}}|w_y(x,y)|^2\dd y.
\]
After integration, the right-hand side of the preceding energy
inequality is at most $\alpha^*R\int|\nabla w|^2$. Thus $w=0$.
The nonlinear term is nonnegative solely by monotonicity of $s\mapsto|s|^{p-1}s$;
no upper bound on its effective derivative is needed.
\end{proof}

\begin{lemma}
\label{t15:w:fine}\label{t15d:lem:fine-comparison}
Let $\rho$ be continuous near zero with $0<\rho_*\le\rho\le\rho^*$,
and put $\rho_0=\rho(0)$. For an integer $m\ge1$ and $A\in\R$, set
\[
 P(z)=A\operatorname{Im}z^m,\qquad q_A(x)=mAx^{m-1},\qquad
 G_\rho(x)=\sgn(q_A(x))\left(\frac{|q_A(x)|}{\rho_0}\right)^{1/p}.
\]
Suppose that $v_r$ is harmonic on expanding upper half-disks,
locally $H^1$ and continuous up to their diameters, and
$v_r\to P$ locally uniformly on closed half-disks as $r\rightarrow0$.
Let $\eps_r>0$ tend to zero, with no prescribed relation to $r$,
and suppose that $v_r$ satisfies
\begin{equation}\label{t15:w:fine-law}
 \partial_\nu v_r-r\lambda\rho(rx)v_r
        +\rho(rx)\eps_r^{-p}|v_r|^{p-1}v_r=0.
\end{equation}
For each fixed $K>0$, choose a fixed $L>K+2$ and set
\[
 E_r=\|v_r-P\|_{C^0(\overline{B_L^+})},\qquad
 \zeta_r=\sup_{|x|\le L}|\rho(rx)-\rho_0|.
\]
Then, for all sufficiently small $r$,
\begin{align}
 &\left\|v_r(\cdot,0)/\eps_r-G_\rho
                                     \right\|_{C^0([-K,K])}
 \notag\\[-2pt]
 &\quad+\left\|\eps_r^{-p}|v_r(\cdot,0)|^{p-1}v_r(\cdot,0)-q_A/\rho_0
                                     \right\|_{C^0([-K,K])}
 \le C_K(E_r+\eps_r+\zeta_r).
                                  \label{t15d:eq:fine-linear}
\end{align}
The constant may depend on the fixed larger half-disk,
$p,m,A,\lambda,\rho_*$ and $\rho^*$, but is independent of
$r$, $\eps_r$, $E_r$ and $\zeta_r$. In particular,
\begin{equation}\label{t15:w:fine-trace}
 \eps_r^{-1}v_r(x,0)\longrightarrow G_\rho(x)
\end{equation}
locally uniformly on $\R$.
\end{lemma}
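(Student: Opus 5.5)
The plan is a two-sided barrier argument. Around the whole window $[-K,K]$ I would squeeze $v_r$ between harmonic sub- and supersolutions whose boundary traces are prescribed, at the scale $\eps_r$, through the \emph{inverse} nonlinearity. The comparison principle is Lemma~\ref{t15:w:comparison}, applied on a fixed half-disk $B_{R}^+(x_0)\subset B_L^+$ with $R\in(1,2)$ and $x_0\in[-K,K]$. It uses $\alpha=r\lambda\rho(r\cdot)\le r\lambda\rho^*$, which satisfies $\alpha^*R<1$ for small $r$, and $b=\rho(r\cdot)\eps_r^{-p}\ge0$. Write $\theta_r:=E_r+\eps_r+\zeta_r$, and use the outward normal $\partial_\nu=-\partial_y$ on the diameter, so that $\partial_\nu P=-q_A$.

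\emph{Construction of the barriers.} For a parameter $\kappa\in\R$ with $|\kappa|$ small, let $f_\kappa$ solve
\[
\rho_0|f_\kappa|^{p-1}f_\kappa=q_A+\kappa \quad\text{on } [-L,L],
\qquad\text{that is,}\qquad
f_\kappa=\sgn(q_A+\kappa)\bigl(|q_A+\kappa|/\rho_0\bigr)^{1/p}.
\]
Since $1/p>1$, the functions $f_\kappa$ are $C^1$ and Lipschitz in $(x,\kappa)$, uniformly for $|\kappa|\le1$; in particular $f_0=G_\rho$. Let $S_\kappa$ be the Poisson extension of $\chi f_\kappa$, with a fixed cutoff $\chi$. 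Then $S_\kappa$ and $\partial_yS_\kappa$ are bounded on $\overline{B_R^+(x_0)}$ uniformly in $\kappa$ and $x_0$, away from the cutoff region. Let $h(x,y)=\bigl(2y/R+((x-x_0)^2-y^2)/R^2\bigr)$ be the quadratic from Step~7(a), which is $\ge1$ on the semicircle and has $\partial_\nu h=-2/R$. I then set
\[
\Psi^\pm=P\pm C_1\theta_r h+\eps_rS_{\pm\kappa_r},
\qquad \kappa_r:=C_2\theta_r .
\]
On the semicircle, $|v_r-P|\le E_r$ and $|\eps_rS|\le C\eps_r$, so $\Psi^-\le v_r\le\Psi^+$ there once $C_1$ is large.

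On the diameter, $P=0$ and $h\ge0$ with $h=O(1)$. Hence the trace of $\Psi^+$ is $\eps_r f_{\kappa_r}+O(\theta_r)$, but the $C_1\theta_rh$ part is nonnegative. Since $s\mapsto|s|^{p-1}s$ is increasing, the nonlinear term satisfies
\[
\rho(rx)\eps_r^{-p}|\Psi^+|^{p-1}\Psi^+\ \ge\ \rho(rx)|f_{\kappa_r}|^{p-1}f_{\kappa_r}\ \ge\ q_A+\kappa_r-C\zeta_r .
\]
The normal derivative is $-q_A-2C_1\theta_r/R-\eps_r\partial_yS_{\kappa_r}\ge-q_A-C\theta_r$. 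The linear term is $r\lambda\rho\Psi^+=O(r\theta_r+r\eps_r)$, which is at most $C\theta_r$. Choosing $C_2$ large therefore makes $\Psi^+$ a supersolution, and symmetrically $\Psi^-$ is a subsolution. Lemma~\ref{t15:w:comparison} then gives $\Psi^-\le v_r\le\Psi^+$ in $B_R^+(x_0)$.

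\emph{Reading off the estimates.} Evaluating at $(x_0,0)$ gives
\[
f_{-\kappa_r}(x_0)-C\theta_r/\eps_r\ \le\ v_r(x_0,0)/\eps_r\ \le\ f_{\kappa_r}(x_0)+C\theta_r/\eps_r .
\]
This is where I expect the main obstacle. An additive $C_1\theta_rh(x_0,0)=C_1\theta_r|x-x_0|^2/R^2$ vanishes at $x=x_0$, so at the centre the trace of $\Psi^\pm$ is exactly $\eps_rf_{\pm\kappa_r}(x_0)$. Hence $v_r(x_0,0)/\eps_r\in[f_{-\kappa_r}(x_0),f_{\kappa_r}(x_0)]$ with no loss. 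The Lipschitz dependence on $\kappa$ then gives the first term of \eqref{t15d:eq:fine-linear}, bounded by $C\kappa_r=C\theta_r$.

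For the flux term I would not use any Lipschitz property of $s\mapsto|s|^{p-1}s$, which fails at $0$. Instead I apply this monotone map directly to the sandwich:
\[
\frac{q_A-\kappa_r}{\rho_0}\ \le\ \eps_r^{-p}|v_r|^{p-1}v_r(x_0,0)\ \le\ \frac{q_A+\kappa_r}{\rho_0}.
\]
This yields the second term with the same bound. All constants depend only on $L,R,p,m,A,\lambda,\rho_*,\rho^*$ and are uniform in $x_0\in[-K,K]$. Finally, since $\theta_r\to0$, the convergence \eqref{t15:w:fine-trace} follows from \eqref{t15d:eq:fine-linear}.
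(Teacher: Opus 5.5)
Your argument is correct and is essentially the paper's proof. It uses the same sandwich: $P$, plus a small multiple of a quadratic harmonic polynomial that is $\ge1$ on the semicircle and vanishes at $(x_0,0)$, plus $\eps_r$ times the Poisson extension of $\sgn(q_A\pm\kappa)\bigl(|q_A\pm\kappa|/\rho_0\bigr)^{1/p}$. It then applies Lemma~\ref{t15:w:comparison} on a fixed half-disk, evaluates at the centre, and reads off the flux bound through monotonicity instead of inverting a H\"older map.

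One justification needs tightening: $C^1$ regularity of $f_\kappa$ alone does not make $\partial_y S_\kappa$ bounded up to the diameter, since the conjugate function of a merely continuous $f_\kappa'$ can be unbounded. As the paper does, you should use that $s\mapsto\sgn(s)|s|^{1/p}$ is $C^{1,\alpha}$ on bounded sets for any $\alpha<\min\{1,1/p-1\}$, which gives the needed normal-derivative bound uniformly in $\kappa$.
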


\begin{proof}
Fix $0<\alpha<\min\{1,1/p-1\}$. On bounded intervals
the map $s\mapsto\sgn(s)|s|^{1/p}$ is $C^{1,\alpha}$ with a
bounded norm. Thus, uniformly for $0\le\eta\le1$, the functions
\[
 g_\eta^\pm(x)=\sgn(q_A(x)\pm\eta)
      \left(\frac{|q_A(x)\pm\eta|}{\rho_0}\right)^{1/p}
\]
have bounded $C^{1,\alpha}$ norms on a larger fixed interval.
Multiply them by a fixed smooth cutoff equal to one on $[-L,L]$,
and let $Q_\eta^\pm$ be their upper half-plane Poisson extensions.
On the relevant closed half-disk and its diameter,
\begin{equation*}
 \|Q_\eta^\pm\|_{C^0(\overline{B_L^+})}
       +\|\partial_\nu Q_\eta^\pm\|_{C^0(I_L)}\le C_K.
\end{equation*}
For the derivative bound, the symmetric second difference
of a $C^{1,\alpha}$ boundary function is
$O(|t|^{1+\alpha})$. Its quotient by $t^2$ is integrable
at zero in the normal derivative formula for the Poisson
extension; the cutoff controls the tails. The bound is
uniform in $\eta$, including where $q_A\pm\eta=0$.

Choose one fixed radius $0<h\le1$ such that
$B_h^+(x_0)\subset B_L^+$ for all $|x_0|\le K$, and set
$M_r=C_K(E_r+\eps_r)$ with a sufficiently large constant.
The harmonic polynomial
\[
 H(X,Y)=X^2-Y^2+3Y
\]
satisfies $H\ge1$ on the unit upper semicircle,
$H(X,0)=X^2\ge0$ on the diameter, and
$\partial_\nu H=-3$ there. Define on $B_h^+(x_0)$
\[
 W_\pm(z)=P(z)+\eps_rQ_\eta^\pm(z)
       \pm M_rH((x-x_0)/h,y/h).
\]
The choice of $M_r$ brackets $v_r$ on the artificial semicircle.
On the diameter $P=0$, so monotonicity gives
\[
 \eps_r^{-p}|W_+|^{p-1}W_+\ge(q_A+\eta)/\rho_0,\qquad
 \eps_r^{-p}|W_-|^{p-1}W_-\le(q_A-\eta)/\rho_0.
\]
Consequently the boundary residual of $W_+$ is at least
\begin{align*}
 &-q_A+\eps_r\partial_\nu Q_\eta^+
      -3M_r/h-r\lambda\rho(rx)W_+
      +\frac{\rho(rx)}{\rho_0}(q_A+\eta)\\
 &\hspace{15mm}\ge c\eta-C_K(\eps_r+M_r/h+rM_r+\zeta_r).
\end{align*}
Here $|W_+(x,0)|\le C_KM_r$; the linear Robin term therefore
has size at most $C_KrM_r$. The lower residual has the opposite
sign with the same error. Since $h$ is fixed, choosing
$\eta=C_K(E_r+\eps_r+\zeta_r)$ makes $W_-$ a subsolution
and $W_+$ a supersolution for sufficiently small $r$; also
$\eta\le1$. Lemma~\ref{t15:w:comparison} applies because
$r\lambda\rho^*h<1$, and gives $W_-\le v_r\le W_+$.

At $z=x_0$ both $P$ and the added polynomial $H$ vanish.
The two exact corrector traces then imply
\[
 q_A(x_0)-\eta
 \le \rho_0 \eps_r^{-p}|v_r(x_0,0)|^{p-1}v_r(x_0,0)
 \le q_A(x_0)+\eta.
\]
This proves the nonlinear-flux estimate in
\eqref{t15d:eq:fine-linear} directly. The inverse map $s\mapsto\sgn(s)|s|^{1/p}$
is Lipschitz on the resulting bounded interval, so the trace
estimate follows with the same error. No application of the
$p$-H\"older map $s\mapsto|s|^{p-1}s$ to a trace error is made. Finally,
$E_r+\eps_r+\zeta_r\to0$ gives \eqref{t15:w:fine-trace}.

\end{proof}

\begin{proposition}[Exact trace with a continuous positive metric factor]
\label{t15:prop:weighted-trace}
Assume the conclusions of
Theorem~\ref{t15:thm:main}\textup{(i)} and suppose
additionally that $\rho$ is continuous near zero with
$0<\rho_*\le\rho\le\rho^*$. Put $\rho_0=\rho(0)$. Then
\begin{equation*}
 G_\rho(x)=\sgn(Ax^{m-1})\left(\frac{m|A|}{\rho_0}\right)^{1/p}|x|^\ell,
 \qquad r^{-\ell}U(rx,0)\longrightarrow G_\rho(x)
\end{equation*}
locally uniformly on $\R$. Furthermore,
\begin{align}
 r^{1-m}|U(rx,0)|^{p-1}U(rx,0)&\longrightarrow\frac{mA}{\rho_0}x^{m-1},
       \label{t15:eq:chart-flux}\\
 r^{1-m}\rho(rx)|U(rx,0)|^{p-1}U(rx,0)&\longrightarrow mAx^{m-1},
       \label{t15:eq:chart-weighted-flux}\\
 r^{1-m}U_y(rx,0)&\longrightarrow mAx^{m-1},
       \label{t15:eq:chart-normal}
\end{align}
locally uniformly. The last expression denotes the continuous
boundary extension of the interior derivative, as justified below.
\end{proposition}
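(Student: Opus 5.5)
The plan is to apply Lemma~\ref{t15:w:fine} to the rescalings $v_r=r^{-m}U(r\,\cdot)$ produced in the proof of Theorem~\ref{t15:thm:main}\textup{(i)}. By \eqref{t15:eq:conformal-expansion}, $v_r\to P=A\Ima z^m$ locally uniformly on closed half-disks. Rescaling the boundary law $-U_y=\rho(\lambda U-|U|^{p-1}U)$ gives
\[
 \partial_\nu v_r-r\lambda\rho(rx)v_r+\rho(rx)\,r^{1-m(1-p)}|v_r|^{p-1}v_r=0 .
\]
This has exactly the form \eqref{t15:w:fine-law} once we set $\eps_r^{-p}=r^{1-m(1-p)}$, that is,
\[
 \eps_r=r^{(m(1-p)-1)/p}=r^{\ell-m}\to0,
\]
which uses $m>\kappa$. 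The weight $\rho$ is continuous and pinched between $\rho_*$ and $\rho^*$, so $\zeta_r\to0$. Lemma~\ref{t15:w:fine} then gives \eqref{t15:w:fine-trace}, namely $\eps_r^{-1}v_r(x,0)=r^{-\ell}U(rx,0)\to G_\rho(x)$ locally uniformly. The explicit form of $G_\rho$ is just the definition evaluated at $q_A=mAx^{m-1}$. The exponent $|x|^{(m-1)/p}=|x|^\ell$ comes out correctly, and the sign is $\sgn(Ax^{m-1})$.

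The flux statements follow from the same lemma. The second term in \eqref{t15d:eq:fine-linear} gives
\[
 \eps_r^{-p}|v_r|^{p-1}v_r\to q_A/\rho_0 .
\]
Since $\eps_r^{-p}r^{-mp}=r^{1-m}$, this is exactly \eqref{t15:eq:chart-flux}. Multiplying by $\rho(rx)\to\rho_0$ uniformly on compacts gives \eqref{t15:eq:chart-weighted-flux}.

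For \eqref{t15:eq:chart-normal}, first justify that $U_y$ has a continuous boundary extension. The Neumann datum $g=\rho(\lambda U-|U|^{p-1}U)$ is continuous on the diameter, being a continuous weight times a continuous trace. In a Hölder-continuous version of the weight this would give $C^1$ up to the diameter through the logarithmic-potential argument of Lemma~\ref{t134:lem:flat-weiss}. With only continuity we instead argue directly. Write $U=S+E$, where $S$ is the cutoff logarithmic potential of $g$ and $E$ extends by even reflection. Then $-S_y=P_y*g$ is the Poisson integral of a continuous function, so it extends continuously to the diameter with boundary value $g$. This interpretation gives $-U_y(x,0)=g(x)$ pointwise, and \eqref{t15:eq:chart-normal} becomes $r^{1-m}\bigl(\rho|U|^{p-1}U-\lambda\rho U\bigr)(rx,0)\to mAx^{m-1}$. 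The linear part is $O(r^{1-m}r^{\ell})\to0$ because $\ell>m-1$, so \eqref{t15:eq:chart-normal} reduces to \eqref{t15:eq:chart-weighted-flux}.

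The main obstacle I expect is bookkeeping rather than analysis. First, one must check that the hypotheses of Lemma~\ref{t15:w:fine} hold uniformly, including that $v_r$ is continuous and locally $H^1$ on expanding half-disks, which is immediate from Theorem~\ref{t15:thm:main}\textup{(i)}. Second, one must check that the normal derivative is understood classically, which rests on the Poisson-integral continuity just described.
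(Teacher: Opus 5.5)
Your proposal is correct and follows the paper's proof: the same rescaling $v_r=r^{-m}U(r\,\cdot)$ with $\eps_r=r^{\ell-m}$ is fed into Lemma~\ref{t15:w:fine} for the trace and nonlinear flux, then multiplied by $\rho(rx)\to\rho_0$ for the weighted flux, and the exact boundary equation with its $O(r^{1+\ell-m})$ linear term gives the normal derivative. Your justification of the classical meaning of $U_y$ is exactly the content of Lemma~\ref{t15:w:normal-trace}: the cutoff logarithmic potential has $y$-derivative equal to a Poisson integral of continuous data, and the remainder extends by even reflection.
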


\begin{proof}
Set $v_r(z)=r^{-m}U(rz)$ and $\eps_r=r^{\ell-m}\to0$.
The identities
\[
 p\ell=m-1,\qquad
 r^{1-m(1-p)}=\eps_r^{-p}
\]
rewrite the scaled Robin equation exactly as
\eqref{t15:w:fine-law}. On each fixed larger half-disk,
Theorem~\ref{t15:thm:main}\textup{(i)} gives $E_r\to0$,
and continuity of $\rho$ gives $\zeta_r\to0$. Hence
\eqref{t15d:eq:fine-linear} in Lemma~\ref{t15:w:fine} gives
\[
 \eps_r^{-1}v_r(x,0)=r^{-\ell}U(rx,0)\longrightarrow G_\rho(x)
\]
locally uniformly. Its nonlinear-flux estimate gives
\eqref{t15:eq:chart-flux} directly. Continuity of $\rho$
gives \eqref{t15:eq:chart-weighted-flux}. The exact
boundary equation reads
\[
 r^{1-m}U_y(rx,0)=\rho(rx)\left[
 r^{1-m}|U(rx,0)|^{p-1}U(rx,0)-\lambda r^{1+\ell-m}r^{-\ell}U(rx,0)\right].
\]
Since $1+\ell-m>0$, the last term tends uniformly to zero,
proving \eqref{t15:eq:chart-normal}. Lemma~\ref{t15:w:normal-trace}
below supplies the asserted classical meaning of $U_y$.
\end{proof}

\begin{lemma}\label{t15:w:normal-trace}
If $\rho$ is continuous and strictly positive and $U$ satisfies \eqref{t12:eq:pullback}, the interior function
$U_y$ extends continuously to compact subintervals of the diameter,
with value $\rho(|U|^{p-1}U-\lambda U)$.
\end{lemma}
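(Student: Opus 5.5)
The plan is to reduce the statement to continuity up to the diameter of the Neumann datum $g=\rho(\lambda U-|U|^{p-1}U)$, and then use that a bounded harmonic function in a half-disk with continuous Neumann datum has $U_y$ extending continuously to the diameter, with boundary value $-g$.

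\emph{Continuity of the datum.} Since $U\in C(\overline{B_R^+})$ and $s\mapsto\lambda s-|s|^{p-1}s$ is continuous on $\R$ (it is $p$-H\"older near $0$), the function $g$ is continuous on compact subintervals of $I_R$ as soon as $\rho$ is. The lemma assumes this, so $g$ is continuous and bounded on each $[-R',R']$, $R'<R$.

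\emph{Potential decomposition.} I will follow the proof of Lemma~\ref{t12:lem:potential}. Fix a cutoff $\chi\in C_c^\infty(-R,R)$ equal to one on a neighborhood of the compact interval, and set
\[
 S(z)=-\frac1\pi\int_\R\chi(t)g(t)\log|z-t|\dd t .
\]
As shown there, $S\in H^1_{\rm loc}$ up to the diameter and has outward normal trace $\chi g$. By \eqref{t12:eq:pullback}, $U-S$ then has zero weak Neumann data near the interval. Its even reflection is therefore harmonic across the diameter, so $\partial_y(U-S)$ is smooth up to the diameter and vanishes there.

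\emph{Continuity of $S_y$.} It remains to show that $S_y$ extends continuously. We have $-S_y=P_y*(\chi g)$, the Poisson integral of the bounded, compactly supported function $\chi g$. At every point where $\chi g$ is continuous, the Poisson integral converges to $\chi g$, locally uniformly on compact sets of continuity points. This gives $S_y\to-\chi g$ continuously as $y\downarrow0$. Combining the two pieces, $U_y$ extends continuously to the compact subinterval with value $-g=\rho(|U|^{p-1}U-\lambda U)$.

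The main point needing care is the uniformity of the Poisson convergence up to the boundary, including tangential approach. This follows from the standard fact that the Poisson extension of a function continuous on an open interval $J$ extends continuously to $\HH\cup J$. Applied here, it gives a continuous extension of $S_y$ to $\overline{B_{R'}^+}$ and not just vertical limits. No DMO regularity or H\"older bound on $\rho$ is needed, since we only ask for continuity of $U_y$ and not of $U_x$.
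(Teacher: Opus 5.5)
Your proposal is correct and follows the paper's argument: your potential $S$ coincides with the paper's $V=\frac1{2\pi}\int\chi q\log((x-t)^2+y^2)\,dt$ because $g=-q$. Also as in the paper, $S_y$ is the Poisson extension of the continuous, compactly supported datum, and $U-S$ has zero Neumann data and reflects evenly across the diameter. Your remark about continuous extension under tangential approach only spells out the uniform convergence that the paper states.
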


\begin{proof}
On a diameter neighborhood write
$q(x)=\rho(x)(|U(x,0)|^{p-1}U(x,0)-\lambda U(x,0))$, which is continuous.
Choose a compactly supported cutoff equal to one on a smaller
interval and define
\[
 V(x,y)=\frac1{2\pi}\int\chi(t)q(t)
                          \log((x-t)^2+y^2)\dd t.
\]
This locally $H^1$ function is harmonic for $y>0$ and
\[
 V_y(x,y)=\frac1\pi\int
            \frac{y\chi(t)q(t)}{(x-t)^2+y^2}\dd t.
\]
The last expression is the Poisson extension of the continuous
datum $\chi q$, hence tends to $q$ uniformly on compact subintervals.
The difference $U-V$ has zero normal trace there and extends
harmonically by even reflection. Its $y$ derivative is continuous
and zero on the diameter. Therefore $U_y$ has exactly the asserted
continuous extension. No assertion concerning continuity of $U_x$
for an arbitrary continuous weight is needed.
\end{proof}

\begin{proof}[\normalfont\bfseries Proof of Theorem~\ref{t15:thm:main}\textup{(ii)}]
Under DMO, Lemma~\ref{t12:lem:dmo-chart} and
Proposition~\ref{t15:prop:weighted-trace} apply with
$\rho_0=a_0=|a_\Phi|$. Substituting
$\Phi^{-1}(q+w)=w/a_\Phi+o(|w|)$ into the conformal expansion gives
\[
 u(q+w)=B\Ima((\overline e w)^m)+o(|w|^m),
 \qquad B=A/a_0^m,
\]
uniformly on the closed  domain.
Put $U_r(z)=r^{-m}U(rz)$ and $P(z)=A\Ima z^m$.
For $Z_r=r^{-1}\Phi^{-1}\circ F_r$ and $Z_0(z)=z/a_0$,
$Z_r\to Z_0$ in $C^1$ on fixed closed half-disks, with uniformly
bounded Jacobians and inverse Jacobians. Splitting
\[
 U_r\circ Z_r-P\circ Z_0
 =(U_r-P)\circ Z_r+P\circ Z_r-P\circ Z_0
\]
transfers the uniform and strong $H^1$ limits. Convergence of
the scaled domains and polar integration give the density
$\int_{B_1^+}(B\Ima z^m)^2=\pi B^2/[4(m+1)]$.

On the boundary, $t(s)=\Phi^{-1}(\gamma(s))=s/a_0+o(s)$.
The uniform conformal trace limit therefore gives
\[
 r^{-\ell}u(\gamma(rx))
 \longrightarrow\sgn(Ax^{m-1})
      \left(\frac{m|A|}{a_0^m}\right)^{1/p}|x|^\ell
 =G_B(x).
\]
Since $p\ell=m-1$, this implies the nonlinear flux limit.
Lemma~\ref{t15:lem:boundary-c1} makes the Robin law pointwise;
the inward normal derivative is $|u|^{p-1}u-\lambda u$ and its linear
term, after scaling, is $O(r^{1+\ell-m})$. This proves the
normal-derivative limit and completes part \textup{(ii)}.
\end{proof}

\section*{Acknowledgments}
Liu's research is supported by the National Natural Science Foundation of China
(No.~12471198). Yang's research is supported by the National Natural Science
Foundation of China (No.~12431018).
\vspace{5mm}

{\bf{Declaration of AI use.}} During the preparation of this manuscript, the authors used ChatGPT
(OpenAI) as an auxiliary tool for language editing and preliminary examination of mathematical
derivations. All mathematical statements, proofs, references, and conclusions were independently
reviewed and verified by the authors, who take full responsibility for the accuracy and integrity of the manuscript.


\begin{thebibliography}{99}

\bibitem{AE}
V. Adolfsson and L. Escauriaza,
\emph{$C^{1,\alpha}$ domains and unique continuation at the boundary},
Comm. Pure Appl. Math. \textbf{50} (1997), no.~10, 935--969.


\bibitem{AEK}
V. Adolfsson, L. Escauriaza and C.~E. Kenig,
\emph{Convex domains and unique continuation at the boundary},
Rev. Mat. Iberoam. \textbf{11} (1995), no.~3, 513--525.

\bibitem{BanerjeeGarofalo}
A. Banerjee and N. Garofalo,
\emph{Quantitative uniqueness for elliptic equations at the boundary
of $C^{1,\mathrm{Dini}}$ domains},
J. Differential Equations \textbf{261} (2016), no.~12, 6718--6757.

\bibitem{BR}
M.~S. Baouendi and L.~P. Rothschild,
\emph{Harmonic functions satisfying weighted sign conditions on the boundary},
Ann. Inst. Fourier (Grenoble) \textbf{43} (1993), no.~5, 1311--1318.

\bibitem{CaffarelliSilvestre}
L. Caffarelli and L. Silvestre,
\emph{An extension problem related to the fractional Laplacian},
Comm. Partial Differential Equations \textbf{32} (2007),
no.~7--9, 1245--1260.




\bibitem{DongLeeKim}
H. Dong, J. Lee and S. Kim,
\emph{On conormal and oblique derivative problem for elliptic equations
with Dini mean oscillation coefficients},
Indiana Univ. Math. J. \textbf{69} (2020), no.~6, 1815--1853.


\bibitem{GarofaloLin}
N. Garofalo and F.-H. Lin,
\emph{Unique continuation for elliptic operators:
a geometric-variational approach},
Comm. Pure Appl. Math. \textbf{40} (1987), no.~3, 347--366.

\bibitem{GP}
A. Girouard and I. Polterovich,
\emph{Spectral geometry of the Steklov problem (Survey article)},
J. Spectr. Theory \textbf{7} (2017), no.~2, 321--359.

\bibitem{JerisonKenig}
D. Jerison and C.~E. Kenig,
\emph{Unique continuation and absence of positive eigenvalues
for Schr\"odinger operators},
Ann. of Math. (2) \textbf{121} (1985), no.~3, 463--488.

\bibitem{Kalaj}
D. Kalaj,
\emph{Muckenhoupt weights and Lindel\"of theorem for harmonic mappings},
Adv. Math. \textbf{280} (2015), 301--321.


\bibitem{KenigZhaoSingular}
C. Kenig and Z. Zhao,
\emph{Boundary unique continuation on $C^1$-Dini domains and
the size of the singular set},
Arch. Ration. Mech. Anal. \textbf{245} (2022), no.~1, 1--88.

\bibitem{KenigZhaoExpansion}
C. Kenig and Z. Zhao,
\emph{Expansion of harmonic functions near the boundary of Dini domains},
Rev. Mat. Iberoam. \textbf{38} (2022), no.~7, 2117--2152.

\bibitem{KriventsovLi}
D. Kriventsov and Z. Li,
\emph{Asymptotic expansions for harmonic functions at conical boundary points},
Rev. Mat. Iberoam. \textbf{41} (2025), no.~3, 837--866.

\bibitem{KukavicaNystrom}
I. Kukavica and K. Nystr\"om,
\emph{Unique continuation on the boundary for Dini domains},
Proc. Amer. Math. Soc. \textbf{126} (1998), no.~2, 441--446.

\bibitem{Li}
Z. Li,
\emph{Unique continuation for Robin problems with non-smooth potentials},
J. Funct. Anal. \textbf{288} (2025), no.~6, Paper No.~110811.

\bibitem{LiWang}
Z. Li and W. Wang,
\emph{Quantitative unique continuation for Robin boundary value
problems on $C^{1,1}$ domains},
Indiana Univ. Math. J. \textbf{72} (2023), no.~4, 1429--1460.

\bibitem{LinNodal}
F.-H. Lin,
\emph{Nodal sets of solutions of elliptic and parabolic equations},
Comm. Pure Appl. Math. \textbf{44} (1991), no.~3, 287--308.

\bibitem{Mori}
A. Mori,
\emph{On an absolute constant in the theory of quasi-conformal mappings},
J. Math. Soc. Japan \textbf{8} (1956), no.~2, 156--166.

\bibitem{PariniWeth}
E. Parini and T. Weth,
\emph{Existence, unique continuation and symmetry of least energy
nodal solutions to sublinear Neumann problems},
Math. Z. \textbf{280} (2015), 707--732.

\bibitem{Pom}
C. Pommerenke,
\emph{Boundary Behaviour of Conformal Maps},
Grundlehren der mathematischen Wissenschaften, vol.~299,
Springer-Verlag, Berlin, 1992.

\bibitem{Ruland}
A. R\"uland,
\emph{Unique continuation for sublinear elliptic equations
based on Carleman estimates},
J. Differential Equations \textbf{265} (2018), no.~11, 6009--6035.

\bibitem{SoaveTerracini}
N. Soave and S. Terracini,
\emph{The nodal set of solutions to some elliptic problems:
sublinear equations, and unstable two-phase membrane problem},
Adv. Math. \textbf{334} (2018), 243--299.

\bibitem{SoaveTortone}
N. Soave and G. Tortone,
\emph{On the nodal set of solutions to some sublinear equations
without homogeneity},
Arch. Ration. Mech. Anal. \textbf{248} (2024), Paper No.~26.

\bibitem{SoaveWeth}
N. Soave and T. Weth,
\emph{The unique continuation property of sublinear equations},
SIAM J. Math. Anal. \textbf{50} (2018), no.~4, 3919--3938.

\bibitem{Tolsa}
X. Tolsa,
\emph{Unique continuation at the boundary for harmonic functions
in $C^1$ domains and Lipschitz domains with small constant},
Comm. Pure Appl. Math. \textbf{76} (2023), no.~2, 305--336.

\bibitem{Tortone}
G. Tortone,
\emph{The nodal set of solutions to some nonlocal sublinear problems},
Calc. Var. Partial Differential Equations \textbf{61} (2022),
Paper No.~82.

\bibitem{Vita}
S. Vita,
\emph{Boundary unique continuation in planar domains by conformal mapping},
Bull. Lond. Math. Soc. \textbf{58} (2026), no.~5, e70356.

\bibitem{Warschawski}
S.~E. Warschawski,
\emph{On differentiability at the boundary in conformal mapping},
Proc. Amer. Math. Soc. \textbf{12} (1961), 614--620.

\bibitem{Weiss}
G.~S. Weiss,
\emph{A homogeneity improvement approach to the obstacle problem},
Invent. Math. \textbf{138} (1999), no.~1, 23--50.

\bibitem{ZhuZhuge}
J. Zhu and J. Zhuge,
\emph{Nodal sets of Dirichlet eigenfunctions in quasiconvex Lipschitz domains},
Adv. Math. \textbf{502} (2026), Paper No.~111150.

\end{thebibliography}
\end{document}